\newtheorem{theorem}{Theorem}[section]
\newtheorem{proposition}[theorem]{Proposition}
\newtheorem{corollary}[theorem]{Corollary}
\newtheorem{lemma}[theorem]{Lemma}
\theoremstyle{definition}
\newtheorem{definition}[theorem]{Definition}
\newtheorem{remark}[theorem]{Remark}
\newtheorem{conjecture/question}[theorem]{Conjecture/Question}
\newcommand{\dblq}{{/\!/}}
\newtheorem{remark/definition}[theorem]{Remark/Definition}
\newtheorem{terminology/notation}[theorem]{Terminology/Notation}
\def\DD{{\mathbb D}}
\def\PP{{\textbf P}}
\def\OO{\mathcal{O}}
\def\cB{\mathcal{B}}
\def\cA{\mathcal{A}}
\def\F{\mathcal{F}}
\def\P{\mathcal{P}}
\def\E{\mathcal{E}}
\def\G{\mathcal{G}}
\def\cS{\mathcal{S}}
\def\K{\mathcal{K}}
\def\I{\mathcal{I}}
\def\cM{\mathcal{M}}
\def\cZ{\mathcal{Z}}
\def\cU{\mathcal{U}}
\def\cC{\mathcal{C}}
\def\H{\mathcal{H}}
\def\Pic0{{\rm Pic}^0(X)}
\def\mm{\overline{\mathcal{M}}}
\def\hh{\overline{\mathcal{H}}}
\def\ss{\overline{\mathcal{S}}}
\def\zz{\overline{\mathcal{Z}}}
\def\ps{\widetilde{\mathcal{S}}}
\def\ts{\overline{\textbf{S}}}
\def\thet{\Theta_{\mathrm{null}}}
\def\pem{\widetilde{\textbf{M}}}
\def\pes{\widetilde{\textbf{S}}}
\def\rem{\overline{\textbf{M}}}
\def\tem{\textbf{{M}}^p}
\def\ttem{\overline{\textbf{M}}^p}
\begin{document}
\title{The geometry of the moduli space of odd spin curves}
\author[G. Farkas]{Gavril Farkas}

\address{Humboldt-Universit\"at zu Berlin, Institut f\"ur Mathematik,  Unter den Linden 6
\hfill \newline\texttt{}
 \indent 10099 Berlin, Germany} \email{{\tt farkas@math.hu-berlin.de}}
\thanks{}

\author[A. Verra]{Alessandro Verra}
\address{Universit\'a Roma Tre, Dipartimento di Matematica, Largo San Leonardo Murialdo \hfill
\indent 1-00146 Roma, Italy}
 \email{{\tt
verra@mat.uniroma3.it}}

\maketitle

The set of odd theta-characteristics on a general curve $C$ of genus $g$ is in bijection with the
set $\theta(C)$ of \emph{theta hyperplanes} $H\in (\PP^{g-1})^{\vee}$ everywhere tangent to the canonically embedded curve $C\stackrel{|K_C|}\hookrightarrow \PP^{g-1}$. Even though the geometry and the intricate combinatorics
of $\theta(C)$  have been studied classically, see \cite{Dol}, \cite{DK} for a modern account, it has only been recently proved in \cite{CS} that one can reconstruct a general curve $[C]\in \cM_g$ from the hyperplane configuration $\theta(C)$.

Odd theta-characteristics form a moduli space $\pi:\cS_g^-\rightarrow \cM_g$. At the level of stacks, $\pi$ is an \'etale cover of degree $2^{g-1}(2^g-1)$. The normalization of $\mm_g$ in the function field of $\cS_g^-$ gives rise to a finite covering $\pi:\ss_g^-\rightarrow \mm_g$. Furthermore, $\ss_g^-$ has a modular meaning being isomorphic to the coarse moduli space of the Deligne-Mumford stack of odd stable spin curves, cf. \cite{C}, \cite{CCC}, \cite{AJ}. The
map $\pi$ is branched along the boundary of $\mm_g$ and one expects $K_{\ss_g^-}$ to enjoy better positivity properties than $K_{\mm_g}$.

The aim of this paper is to describe the birational geometry of $\ss_g^-$ for all $g$. Our goals are (1) to understand the transition from rationality to maximal Kodaira dimension for $\ss_g^-$ as $g$ increases, and (2) to use the existence of \emph{Mukai models} of $\mm_g$ in order to construct explicit unirational parameterizations of $\ss_g^-$ for small genus. Remarkably, we end up having no gaps in the classification of $\ss_g^-$. First, we show that in the range where the general curve $[C]\in \cM_g$
lies on a $K3$ surface, the existence of special \emph{theta pencils} on $K3$ surfaces, provides an \emph{explicit} uniruled
parameterization of $\ss_g^-$:
\begin{theorem}\label{uniruled}
The odd spin moduli space $\ss_g^{-}$ is uniruled for $g\leq 11$.
\end{theorem}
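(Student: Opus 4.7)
The starting point is Mukai's theorem: for $g\le 11$ a general curve $[C]\in \cM_g$ lies on a polarized $K3$ surface $(S,L)$ with $L^2=2g-2$ and $C\in |L|$, with the exception of the anomalous genus $g=10$ where the general $C$ sits on a Fano threefold rather than a generic $K3$. The plan is to use the ambient $K3$ (or its substitute in genus $10$) to equip $C$ with a distinguished odd theta-characteristic $\eta$ that varies in a rational family as $C$ moves inside a pencil $\PP^1\subset|L|$; each such \emph{theta pencil} gives a rational curve in $\ss_g^-$, and allowing the ambient surface to vary furnishes a covering family.

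Concretely, I would single out a Noether--Lefschetz divisor inside the moduli space $\F_g$ of polarized $K3$ surfaces of genus $g$, parameterizing $(S,L)$ whose Picard lattice contains an auxiliary class $M$ satisfying $2M\sim L$ and $M\cdot L=g-1$. A lattice embedding computation combined with surjectivity of the period map should supply such $(S,L,M)$ on a $18$-dimensional family. The relation $2M\sim L$ makes $\eta:=\OO_S(M)|_C$ a theta-characteristic on every smooth $C\in|L|$, since $2\eta\sim L|_C=K_C$, and its parity is read off from the restriction sequence
\[
0\to \OO_S(M-L)\to \OO_S(M)\to M|_C\to 0
\]
combined with Kawamata--Viehweg vanishing on $S$: these identify $h^0(C,\eta)$ with $h^0(S,M)$, which is computed by Riemann--Roch on the $K3$ and can be forced to be odd by the choice of $M^2$.

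Uniruledness then follows. For each fixed $(S,L,M)$ a generic pencil $\PP^1\subset|L|$ lifts to a rational curve in $\ss_g^-$ via $C\mapsto(C,\eta)$, and allowing $(S,L,M)$ to vary over the Noether--Lefschetz divisor produces a covering family of rational curves on $\ss_g^-$. The dimension count $\dim\F_g^{\mathrm{NL}}+\dim|L|=18+g\ge 3g-3=\dim\ss_g^-$ holds comfortably for small $g$ but is delicate in the top range, so in genus $10$ one substitutes Nikulin surfaces (which naturally carry $2$-torsion and supply theta pencils directly) or Mukai's genus-$10$ Fano model, and in genus $11$ one exploits a refined theta-class construction on the $K3$ sections of the relevant Mukai homogeneous variety in order to recover the missing modulus.

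The principal obstacle is parity control: producing $M$ with $h^0(S,M)$ \emph{odd} rather than even. Naive lattice choices tend to give even theta-characteristics (often $h^0(S,M)=0$ or $2$), so oddness must be arranged by careful genus-by-genus lattice adjustments or by replacing $M$ with a more elaborate construction (a Lazarsfeld--Mukai bundle, a nodal $K3$ section, or a limit spin structure on a reducible $C=C_1\cup C_2$ lying on a degenerate $K3$). A secondary obstacle is completing the dominance argument for $g=10,11$, where the dimension budget leaves no slack and the special features of Nikulin or Fano models must be invoked.
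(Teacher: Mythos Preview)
Your approach has a concrete and fatal gap at the lattice step. If $2M\sim L$ in $\mathrm{Pic}(S)$ for a $K3$ surface $S$, then $L^2=4M^2$, so $M^2=(g-1)/2$. But the intersection form on a $K3$ is \emph{even}, so $M^2\in 2\mathbb Z$, forcing $g\equiv 1\pmod 4$. Among $3\le g\le 11$ this leaves only $g=5,9$; for $g=3,4,6,7,8,10,11$ no such $M$ exists and the construction collapses before parity or dominance are even in play. The vague workarounds you list (Lazarsfeld--Mukai bundles, Nikulin surfaces, reducible limits) are not developed into an argument, and each would need its own parity and dominance verification.

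There is also a second structural problem even where the lattice allows $M$: your construction produces exactly \emph{one} theta-characteristic $\eta=M|_C$ per pair $(S,C)$, so the resulting family of rational curves lands in a specific sheet over $\cM_g$ and there is no mechanism offered to hit a \emph{general} point of the degree $2^{g-1}(2^g-1)$ cover $\ss_g^-\to\mm_g$. A dimension count of the source is not a dominance proof.

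The paper's argument is conceptually different and avoids both obstacles by \emph{not} extending $\eta$ to the surface. One starts with an arbitrary $[C,\eta]\in\cS_g^-$, realizes $C\subset X$ on a general polarized $K3$ (Mukai, for $g\le 9$ or $g=11$), and takes $d:=\mathrm{supp}(\eta)\in C_{g-1}$ as a $0$-cycle on $X$. Since $h^0(C,\eta)=1$, the linear span $\langle d\rangle\subset\PP^g$ has codimension $2$, and the pencil $P=\mathbf P H^0(X,\mathcal I_{d/X}(C))$ has base locus the length-$(2g-2)$ scheme $2d$; thus every smooth member $C'\in P$ carries $\OO_{C'}(d)$ as an odd theta-characteristic. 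This ``theta pencil'' gives a rational curve in $\ss_g^-$ through the \emph{given} general point $[C,\eta]$, so dominance is automatic and no Noether--Lefschetz restriction, parity computation, or line-bundle extension is needed. Genus $10$ is handled separately by a nodal degeneration: one shows the locus $\mathcal K_{10,1}^-$ of pointed spin curves $(C,x,y,\eta)$ with $x+y\le\mathrm{supp}(\eta)$ and $C_{xy}$ lying on a genus-$11$ $K3$ is nonempty and irreducible, hence uniruled by theta pencils and finite over $\cS_{10}^-$.
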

We fix a general spin curve $[C, \eta]\in \cS_g^-$, therefore $h^0(C, \eta)=1$. When $g\leq 9$ or $g=11$, the underlying curve $C$ is the hyperplane section of a $K3$ surface $X\subset \PP^g$, such that if $d\in C_{g-1}$ is the (unique) effective divisor with $\eta=\OO_C(d)$, then the linear span $\langle d\rangle\subset \PP^g$ is a codimension two linear subspace. A rational curve  $P\subset \ss_g^-$ is induced by the pencil $\PP H^0(X, \mathcal{I}_{d/X}(C))$ of hyperplanes containing $\langle d\rangle$. We show in Section 3 that $P\subset \ss_g^-$ is  a \emph{covering} rational curve, satisfying $$P\cdot K_{\ss_g^-}=2g-24<0.$$
Thus $P\cdot K_{\ss_g^-}<0$ precisely when $g\leq 11$, which highlights the fact that the nature of $\ss_g^-$ is expected to change exactly when $g\geq 12$. This is something we shall achieve in the course of proving Theorem \ref{kodaira}.

The previous argument no longer works for $\ss_{10}^-$, when the condition that a curve $[C]\in \mm_{10}$ lie on a $K3$ surface is divisorial in moduli \cite{FP}. This case is a specialization of the genus $11$ case. A  general one-nodal irreducible curve $[C]\in \Delta_0\subset \mm_{11}$ of arithmetic genus $11$ lies on a $K3$ surface $X\subset \PP^{11}$.  By a degeneration argument, we show that this construction can also be carried out in such a way, that if $\nu:C'\rightarrow C$ denotes the normalization of $C$, then the points $x, y\in C'$ with $\nu(x)=\nu(y)$ (that is, mapping to the node of $C$), lie in the support of the zero locus of one of the odd-theta characteristics of $[C']\in \cM_{10}$. Ultimately, this produces a rational curve $P\subset \ss_{10}^-$ through a general point, which shows that $\ss_{10}^-$ is uniruled as well.
\vskip 2pt

In the range in which a \emph{Mukai model} of $\mm_g$ exists, our results are more precise:

\begin{theorem}\label{unirational}
$\ss_g^-$ is unirational for $g\leq 8$.
\end{theorem}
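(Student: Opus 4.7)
The plan is to refine the $K3$-pencil construction underlying Theorem \ref{uniruled} by exploiting Mukai's rational parameterizations of polarized $K3$ moduli in low genus. For $g\le 5$ I would invoke classical results: $\mm_g$ is rational in this range and odd theta-characteristics admit very concrete geometric descriptions (Weierstrass points when $g=2$, bitangents to the canonical plane quartic when $g=3$, nets of quadrics of odd type when $g=4$, etc.), from which the (uni)rationality of $\ss_g^-$ follows by previously known work of Dolgachev, Verra and others. I therefore concentrate on the Mukai range $g\in\{6,7,8\}$, where the argument is genuinely new.

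For $g=6,7,8$ I would use Mukai's presentation of the general polarized $K3$ surface of genus $g$ as an appropriate (linear, or linear-quadric) section of a fixed rational homogeneous variety $V_g\subset\PP^{N_g}$: $V_6=G(2,5)\cap Q$, $V_7=OG(5,10)$, $V_8=G(2,6)$. In particular the moduli space $\F_g$ is rational. By Theorem \ref{uniruled}, a general $[C,\eta]\in\ss_g^-$ arises from a triple $(X,\Lambda,H)$, where $X\subset\PP^g$ is a $K3$ of genus $g$, $\Lambda\subset\PP^g$ is a codimension-$2$ linear subspace satisfying $\Lambda\cdot X=2d$ on $X$ with $d=\mathrm{supp}(\eta)$, and $H\supset\Lambda$ is arbitrary, via $C=X\cap H$. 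I would then construct a parameter space $\mathcal{P}$ for such triples as follows: $X$ varies in Mukai's rational model of $\F_g$; the tangency points $p_1,\ldots,p_{g-1}\in X$ vary in the relative symmetric product, the span $\Lambda:=\langle p_1,\ldots,p_{g-1}\rangle$ being determined, subject to the $g-1$ tangency conditions $T_{p_i}X\subset\Lambda$; finally $H$ varies in the pencil of hyperplanes through $\Lambda$. The map $\mathcal{P}\dashrightarrow\ss_g^-$ sending $(X,\Lambda,H)\mapsto(X\cap H,\eta_d)$ is dominant by Theorem \ref{uniruled}, so unirationality of $\mathcal{P}$ will imply that of $\ss_g^-$.

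The main obstacle is to show that the tangency locus inside the relative symmetric product is unirational: the conditions $T_{p_i}X\subset\Lambda$ are coupled, since $\Lambda$ itself is determined by all of the $p_i$ simultaneously, and one must produce a rational section of the resulting incidence variety. Here I would exploit the linearity of the Mukai model --- the $K3$ surface $X$ is cut out of $V_g$ by a parameter space that carries a transitive action of $\mathrm{Aut}(V_g)$, a reductive linear group --- to reduce the tangency problem to an incidence computation inside $V_g$ itself that is amenable to explicit rational parameterization by Schubert-style arguments. A straightforward dimension count ($\dim\F_g=19$, plus one for the $\PP^1$-pencil of hyperplanes, plus the positive-dimensional fiber of $\F_g\to\mm_g$ for $g\le 9$) confirms that the tangency locus has the expected dimension. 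Carrying out this analysis separately for each of $g=6,7,8$, together with the classical cases $g\le 5$, yields the desired unirational parameterization of $\ss_g^-$.
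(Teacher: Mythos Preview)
Your proposal has a genuine gap at exactly the point you yourself flag as ``the main obstacle'': you never establish that the tangency locus is unirational. Saying you would ``reduce the tangency problem to an incidence computation inside $V_g$ itself that is amenable to explicit rational parameterization by Schubert-style arguments'' is not a proof, and in fact the coupled tangency conditions $\dim\bigl(T_{p_i}X\cap\langle p_1,\ldots,p_{g-1}\rangle\bigr)\geq 1$ do not obviously cut out a rational or unirational variety over the (rational) moduli of polarized $K3$'s. A dimension count confirms the expected dimension but says nothing about rationality. Your appeal to classical work for $g\leq 5$ is also vague; the paper gives a uniform argument for $g\leq 6$ via nodal plane sextics that you would need to replace.

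The paper's route is entirely different and sidesteps the tangency problem. For $g\leq 6$ it uses that a general odd spin curve is birational to a nodal plane sextic totally tangent to a cubic along a divisor supported on nine points; the parameter space is then a projective bundle over an open set of $(\PP^2)^9$, manifestly rational. For $7\leq g\leq 8$ the key idea is \emph{not} to parameterize smooth spin curves directly, but to link them to highly nodal ones: one builds a correspondence $\P^o_{g,\delta}$ of pairs $(C,\Gamma)$ of curve sections of $V_g$ sharing a cluster $Z$, where $\Gamma$ is $\delta$-nodal with nodes in $\mathrm{supp}(Z)$. This $\P^o_{g,\delta}$ is a projective bundle over the locus $B^-_{g,\delta}$ of $\delta$-nodal spin curves, and projects dominantly to $\ss_g^-$ provided $\delta\leq\dim(V_g)-1$. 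Taking $\delta=g-1$ (which is allowed exactly when $g\leq 8$), the normalization of $\Gamma$ has genus one and the theta-characteristic is trivial, so $B^-_{g,g-1}$ is parameterized by elliptic curves with $g-1$ pairs of points to be glued---visibly unirational via plane cubics. The reduction to genus one is the idea you are missing; without it, the unirationality of your parameter space $\mathcal{P}$ remains unproved.
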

The proof relies on the existence, in this range, of \emph{Mukai varieties} $V_g\subset \PP^{n_g+g-2}$, where $n_g=\mbox{dim}(V_g)$, which have the property that general $1$-dimensional linear sections of $V_g$ are canonical curves $[C]\in \cM_g$ with general moduli. We fix an integer $1\leq \delta\leq g-1$ and consider the correspondence
$$\P_{g, \delta}^o:=\Bigl\{(C, \Gamma, Z): Z\subset C\cap \Gamma \subset V_g, \ \  |\mathrm{sing}(\Gamma)|=\delta, \  \mathrm{sing}(\Gamma)\subset Z\Bigr\},$$
where $Z\subset V_g$ is a \emph{cluster}, that is, a $0$-dimensional subscheme of $V_g$ of length $2g-2$, supported at $g-1$ points and such that $\mathrm{dim}\langle Z\rangle =g-2$ (see Section 3 for a precise definition), $\Gamma\subset V_g$ is an irreducible  $\delta$-nodal curve section of $V_g$ whose nodes are among the points in the support of $Z$, and $C\subset V_g$ is an arbitrary curve linear section of $V_g$ containing $Z$ as a subscheme. Thus if $C$ is smooth, then $Z\subset C$ is a divisor of even degree at each point in its support, and $\OO_C(Z/2)$ can be viewed as an odd theta-characteristic. The quotient variety $\overline{\mathbb P}_{g, \delta}:=\P_{g, \delta}^o\dblq\mbox{Aut}(V_g)$ comes equipped with two projections
$$\ss_g^-\stackrel{\overline{\alpha}}\longleftarrow \overline{\mathbb P}_{g, \delta}\stackrel{\overline{\beta}}\longrightarrow B_{g, \delta}^-,$$
where $B_{g, \delta}^-\subset \ss_g^-$ denotes the moduli space of irreducible $\delta$-nodal curves of arithmetic genus $g$ together with an odd theta- characteristic on the normalization. It is easy to see that $\overline{\mathbb P}_{g, \delta}$ is birational to a projective bundle over the irreducible variety $B_{g, \delta}^-$. Thus the unirationality of $\ss_g^-$ follows once we prove that (i) $\alpha$ is dominant, and (ii) $B_{g, \delta}^-$ itself is unirational.
We carry out this program when $g\leq 8$. When $\delta=n_g-1$, we show in Section 3 that the map $\overline{\alpha}$ is birational, hence in this case $\overline{\beta}$ realizes a birational isomorphism between $\ss_g^-$ and a (Zariski trivial) projective bundle over $B_{g, n_g-1}^-$. Very interesting is the case $g=8$, when $n_g=8$, see \cite{M1}, and $B_{8, 7}^-$ is isomorphic to the moduli space $\mm_{1,14}/\mathbb Z_2^{\oplus 7}$ of elliptic curves with seven pairs of points; here each copy of $\mathbb Z_2$ identifies a pair of points.

\begin{theorem}\label{nyolc}
$\ss_8^-$ is birational to $\PP^7\times \bigl(\mm_{1, 14}/\mathbb Z_2^{\oplus 7}\bigr)$.
\end{theorem}

In the process of proving Theorem \ref{unirational}, we establish some facts of independent interest concerning the Mukai models
$$\mathfrak{M}_g:=\textbf G(g-1, n_g+g-2)^{\mathrm{ss}}\dblq \mathrm{Aut}(V_g).$$
These are birational models of $\mm_g$ having $\mathrm{Pic}(\mathfrak M_g)=\mathbb Z$ and appearing as GIT quotients of Grassmannians; they can be viewed as log-minimal models of $\mm_g$ emerging from the constructions carried out in \cite{M1}, \cite{M2}, \cite{M3}.
\vskip3pt

Theorem \ref{uniruled} is sharp and the remaining moduli spaces $\ss_g^-$ are of general type:

\begin{theorem}\label{kodaira}
The space $\ss_g^-$ is a variety of general type for $g>11$.
\end{theorem}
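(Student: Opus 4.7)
The plan is to prove that $K_{\ss_g^-}$ is big for every $g\geq 12$ and, separately, to verify that the singularities of $\ss_g^-$ impose no adjunction conditions, so that pluricanonical forms extend to any desingularization. First I would compute $K_{\ss_g^-}$ explicitly. Starting from the Harris--Mumford formula
\[
K_{\mm_g}\equiv 13\lambda - 2\delta_0 - 3\delta_1 - 2\sum_{i\geq 2}\delta_i
\]
and applying Riemann--Hurwitz to $\pi:\ss_g^-\to \mm_g$, one needs the structure of $\pi$ over the boundary: along $\Delta_0$ it decomposes as $\pi^*\delta_0 = \alpha_0 + 2\beta_0$, with $\pi$ simply ramified along the Wirtinger-type component $\beta_0$, and analogously over each $\Delta_i$ for $i\geq 1$. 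This produces an expansion
\[
K_{\ss_g^-}\equiv 13\lambda - 2\alpha_0 - 3\beta_0 - \sum_{i\geq 1} c_i(\alpha_i,\beta_i),
\]
with explicit coefficients $c_i$ to be carefully tracked. The key observation is that the improvement $-2\delta_0 \rightsquigarrow (-2\alpha_0,-3\beta_0)$ is precisely the source of the extra positivity that lowers the general-type threshold from $g=22$ on $\mm_g$ down to $g=12$ on $\ss_g^-$.

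The central step is to produce an effective divisor on $\ss_g^-$ whose class, combined with a positive multiple of $\lambda$ and with non-negative multiples of the boundary, realises $K_{\ss_g^-}$. The natural geometric divisor intrinsic to $\ss_g^-$ is the \emph{theta-null divisor} $\thet^{-}\subset \ss_g^-$ of odd spin curves $(C,\eta)$ with $h^0(C,\eta)\geq 3$, whose class is computable via a Porteous formula applied to the pushforward of the universal odd spin line bundle on the universal curve, followed by a boundary analysis splitting contributions between the $\alpha_0,\beta_0$ and the $\alpha_i,\beta_i$. To cover all $g\geq 12$, I would supplement $\thet^-$ by pulling back classical Brill--Noether and Koszul divisors from $\mm_g$ via $\pi$ (whenever they exist), and by constructing theta-Brill--Noether divisors intrinsic to $\ss_g^-$, namely the loci where, for a pencil $L\in W^1_d(C)$ of appropriate degree, the Petri-type multiplication map $H^0(C,\eta)\otimes H^0(C,L)\to H^0(C,\eta\otimes L)$ fails to have maximal rank. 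A linear algebra argument in $\mathrm{Pic}(\ss_g^-)_{\QQ}$ then expresses $K_{\ss_g^-}$ as a positive $\QQ$-combination of $\lambda$, the effective divisor, and the boundary.

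The final step is to verify that the singularities of $\ss_g^-$ are canonical, which follows the pattern of Harris--Mumford for $\mm_g$ and of Ludwig and the first author for spin moduli; only the additional singularity type along $\beta_0$ requires a further local toroidal check. The main obstacle I expect is the case $g=12$: since $g+1=13$ is prime, no classical Brill--Noether divisor is available on $\mm_{12}$, so the effective divisor must be constructed entirely inside $\ss_{12}^-$, and bounding its slope tightly enough for $K_{\ss_{12}^-}$ to fall into the effective cone is the technical heart of the argument. The consistency check $P\cdot K_{\ss_g^-}=2g-24\geq 0$ exactly for $g\geq 12$ from the covering pencils of Theorem \ref{uniruled} shows that the bound is sharp and suggests that no single effective class of small slope will work uniformly for the very first value $g=12$.
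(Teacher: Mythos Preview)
Your overall architecture matches the paper's: compute $K_{\ss_g^-}$ by Riemann--Hurwitz, invoke Ludwig's extension result for pluricanonical forms, and write $K_{\ss_g^-}$ as a positive multiple of $\lambda$ plus an effective class by combining an intrinsic divisor on $\ss_g^-$ with pull-backs of small-slope divisors from $\mm_g$. However, your choice of intrinsic divisor is wrong. The locus $\{[C,\eta]\in\cS_g^-: h^0(C,\eta)\geq 3\}$ is \emph{not} a divisor: by the deformation theory of theta-characteristics, the condition $h^0(\eta)\geq r+1$ with the correct parity has expected codimension $\binom{r+1}{2}$, so jumping from $h^0=1$ to $h^0\geq 3$ is a codimension-$3$ condition. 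There is no odd analogue of the even theta-null divisor. The paper's substitute is the divisor $\cZ_g$ of \emph{degenerate} odd theta-characteristics, i.e.\ those whose unique section vanishes to order $\geq 2$ at some point; this is genuinely codimension one, and its class (Theorem~\ref{degen}) is what makes the linear-algebra step go through for $g\geq 13$. Your proposed Petri-type loci using $H^0(C,\eta)\otimes H^0(C,L)$ also degenerate, since $h^0(\eta)=1$ generically and multiplication by a nonzero section is always injective.

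Your treatment of $g=12$ also diverges from what actually works. You correctly flag that no Brill--Noether divisor exists on $\mm_{12}$, but you then propose to build the missing effective class entirely inside $\ss_{12}^-$. The paper does the opposite: it constructs a divisor $\overline{\mathfrak{D}}_{12}$ on $\mm_{12}$ itself (curves carrying an $L\in W^4_{14}(C)$ with non-injective $\mathrm{Sym}^2 H^0(L)\to H^0(L^{\otimes 2})$), computes its class by an elaborate limit-linear-series and Grothendieck--Riemann--Roch calculation, and shows its slope is $4415/642<6+12/13$, i.e.\ a counterexample to the Slope Conjecture. Pulling this back and combining with $\zz_{12}$ finishes the case. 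So the hardest ingredient lives on $\mm_{12}$, not on $\ss_{12}^-$.
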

The border case of $\ss_{12}^-$ is particularly challenging and takes up the entire Section 6. We remark that in the range $11<g<17$, of the two moduli spaces $\ss_g^-$ and $\mm_g$, one is of general type whereas the other has negative Kodaira dimension. More strikingly,   Theorems \ref{kodaira} and \ref{uniruled} coupled with results from \cite{F3}, show that for $9\leq g\leq 11$, the space $\ss_g^-$ is uniruled while $\ss_g^+$ is of general type! Finally, we note that $\ss_8^-$ is unirational whereas $\ss_8^+$ is of Calabi-Yau type \cite{FV}.

We describe the main steps in the proof of Theorem \ref{kodaira}. First, we use that for all $g\geq 4$ and $\ell\geq 0$, if $\epsilon: \widehat{\cS}_g\rightarrow \ss_g^{-}$ denotes a resolution of singularities, then there is an induced isomorphism at the level of global sections , see \cite{Lud}
$$\epsilon^*: H^0\Bigl(\ss_{g, \mathrm{reg}}^-, K_{\ss_g^-}^{\otimes \ell}\Bigr)\stackrel{\sim}\longrightarrow H^0\Bigl(\widehat{\cS}_g, K_{\widehat{S}_g}^{\otimes \ell}\Bigr).$$
Thus to conclude that $\ss_g^-$ is of general type, it suffices to exhibit an effective divisor $D$ on $\ss_g^-$ such that for appropriately chosen rational constants $\alpha, \beta>0$, a relation of the type
$K_{\ss_g^-}\equiv \alpha\ \lambda+\beta\ D+E\in \mbox{Pic}(\ss_g^-)$ holds, where $\lambda\in \mbox{Pic}(\ss_g^-)$ is the pull-back to $\ss_g^-$ of the Hodge class, and $E$ is an effective $\mathbb Q$-class which is typically a combination of boundary divisors. It is essential to pick $D$ so that (i) its class can be explicitly computed, that is, points in $D$ have good geometric characterization, and (ii) $[D]\in \mathrm{Pic}(\ss_g^-)$ is in some way an extremal point of the effective cone of divisors so that the coefficients $\alpha, \beta$ stand a chance of being positive. In the case of $\ss_g^+$, the role
of $D$ is played by the divisor $\overline{\Theta}_{\mathrm{null}}$ of vanishing theta-nulls, see \cite{F3}. In the case of $\ss_g^-$ we compute the class of \emph{degenerate theta-characteristics}, that is, curves carrying a non-reduced odd theta-characteristic.
\begin{theorem}\label{degen}
We fix $g\geq 3$. The locus consisting of odd spin curves
$$\cZ_g:=\Bigl\{[C, \eta]\in \cS_g^-:\eta=\OO_C(2x_1+x_2+\cdots+x_{g-2})\ \mbox{ where } x_i\in C \mbox{ for } i=1, \ldots,  g-2 \Bigr\}$$
is a divisor on $\cS_g^-$. The class of its compactification inside $\ss_g^-$ equals
$$\zz_g \equiv (g+8)\lambda-\frac{g+2}{4}\alpha_0-2\beta_0-\sum_{i=1}^{[g/2]} 2(g-i)\ \alpha_i-\sum_{i=1}^{[g/2]} 2i\ \beta_i\in \mathrm{Pic}(\ss_g^-),$$
where $\lambda, \alpha_0, \beta_0, \ldots, \alpha_{[g/2]}, \beta_{[g/2]}$ are the standard generators of $\mathrm{Pic}(\ss_g^-)$.
\end{theorem}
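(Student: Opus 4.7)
The plan is to realize $\cZ_g$ as the proper pushforward of a Porteous degeneracy locus from the universal odd spin curve, to compute its class via Grothendieck--Riemann--Roch, and finally to pin down the boundary coefficients using test curves. That $\cZ_g$ is a divisor follows from a parameter count: on a general $[C,\eta]\in \cS_g^-$ one has $h^0(C,\eta)=1$ and the unique divisor $D_\eta\in |\eta|$ is reduced of degree $g-1$, so insisting that $D_\eta$ contain a double point imposes a single condition on $\cS_g^-$.

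Let $\pi:\mathcal{X}\to \cS_g^-$ be the universal curve and $\cL$ the universal odd theta-characteristic, with $\cL^{\otimes 2}\cong \omega_\pi$ on the open part, and set $\cE:=\pi_*\cL$, generically of rank one. The evaluation lifts to a natural map $\varphi:\pi^*\cE\to J^1(\cL)$ into the first jet bundle, and the degeneracy locus $Z\subset \mathcal{X}$ where $\varphi$ drops rank parametrizes triples $(C,\eta,x)$ with $D_\eta\geq 2x$; then $\cZ_g=\pi_*Z$. The jet sequence $0\to \cL\otimes \omega_\pi\to J^1(\cL)\to \cL\to 0$ combined with $2c_1(\cL)=c_1(\omega_\pi)$ gives $c_1(J^1(\cL))=2\,\omega_\pi$ and $c_2(J^1(\cL))=\tfrac{3}{4}\omega_\pi^2$, so Porteous yields
$$[Z]=c_2\bigl(J^1(\cL)\otimes \pi^*\cE^\vee\bigr)=\tfrac{3}{4}\omega_\pi^2-2\,\omega_\pi\cdot \pi^*c_1(\cE)+(\pi^*c_1(\cE))^2.$$

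Next, $c_1(\cE)$ is computed by Grothendieck--Riemann--Roch applied to $\pi_!\cL$: since $\cL^{\otimes 2}\cong \omega_\pi$, Serre duality gives $R^1\pi_*\cL\cong \cE^\vee$, hence $\mathrm{ch}_1(\pi_!\cL)=2\,c_1(\cE)$, while a direct expansion shows $\bigl[\mathrm{ch}(\cL)\,\mathrm{td}(\omega_\pi^{-1})\bigr]_{(2)}=-\omega_\pi^2/24$; comparing, $c_1(\cE)=-\lambda/4+\delta/48$ on the open part. Pushing $[Z]$ down using $\pi_*(\omega_\pi^2)=\kappa_1=12\lambda-\delta$, $\pi_*\bigl(\omega_\pi\cdot \pi^*c_1(\cE)\bigr)=(2g-2)\,c_1(\cE)$ and $\pi_*\bigl((\pi^*c_1(\cE))^2\bigr)=0$ one finds
$$\zz_g\equiv (g+8)\,\lambda\,-\,\tfrac{g+8}{12}\,\delta \quad\text{(open-part expression)},$$
matching the coefficient $g+8$ of $\lambda$ in the statement and giving a first constraint on the boundary classes.

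Finally, the boundary coefficients demand separate analysis since the identification $\cL^{\otimes 2}\cong \omega_\pi$ fails along the Wirtinger-type divisors $\beta_0$ and $\beta_i$, where the stable spin curve acquires an exceptional $\PP^1$ on which $\cL$ restricts to $\OO_{\PP^1}(1)$. I would extend $\cL$ explicitly across each stratum $\alpha_0,\beta_0,\alpha_i,\beta_i$ via the Cornalba construction and determine each coefficient by intersection with standard test curves: a generic Lefschetz pencil on a $K3$ surface lifted to $\ss_g^-$ pins down the $\delta_0$-contributions, while for each $i\geq 1$ the families obtained by attaching a fixed smooth spin curve of genus $i$ (of either parity) at a moving point of a general curve in $\cM_{g-i}$, and vice-versa, separate $\alpha_i$ from $\beta_i$. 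The main obstacle will be the careful bookkeeping along $\beta_0$ and $\beta_i$: the asymmetry between the naive open-part coefficient $-(g+8)/12$ of $\delta$ and the final coefficients $-(g+2)/4,\,-2,\,-2(g-i),\,-2i$ reflects precisely how $\cL$ and $\pi_*\cL$ are twisted along the exceptional components of the universal spin curve, and a fair amount of intersection-theoretic bookkeeping on $\ss_g^-$ will be needed to reconcile the two descriptions.
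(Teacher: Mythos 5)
Your open-part setup is sound and closely parallels what the paper itself carries out only as an independent check in the remark following Proposition 5.4: realize $\cZ_g$ as the Porteous degeneracy locus of the evaluation $\pi^*\mathcal{E}\to J^1(\mathcal L)$, push forward using $\pi_*(\omega_\pi^2)=\kappa_1$, and use $c_1(\pi_*\mathcal{L})=-\lambda/4$ (which your GRR argument derives correctly, since $R^1\pi_*\mathcal{L}\cong\mathcal{E}^{\vee}$ and $\mathrm{ch}_1(\pi_!\mathcal{L})=-\kappa_1/24$) to obtain the coefficient $g+8$ of $\lambda$. The dimension count for the divisor claim is also fine.

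The gap is that everything beyond the coefficient of $\lambda$ is deferred to "a fair amount of intersection-theoretic bookkeeping," and this is where essentially all of the content lies; in fact the paper recovers the $\lambda$-coefficient from the boundary relations rather than the other way round. Two issues you do not engage with. First, the naive degeneracy locus does not extend cleanly across the boundary: the paper devotes Section 5 to constructing a Deligne--Mumford stack $\textbf{X}_g$ of limit linear series $\mathfrak g^0_{g-1}$ exactly to control what the closure $\zz_g$ looks like along each boundary divisor, and the local scheme structure of this stack along $B_0$ produces a multiplicity-two contribution (Proposition 5.4) that is indispensable for pinning down $\bar{\beta}_0=2$; "extending $\mathcal{L}$ explicitly across each stratum via the Cornalba construction" is not a substitute for this analysis. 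Second, the actual intersection numbers with test curves are nontrivial limit-linear-series arguments, not routine bookkeeping: the $\beta_i$-coefficient (Proposition 5.2), for instance, hinges on the smoothness and genus $3i(i-1)+1$ of the Scorza curve $T_{\eta_C^+}$ established in Theorem 2.1 together with a Hurwitz count of ramification of $T_{\eta_C^+}\to C$, none of which appears in your outline. Finally, the "generic Lefschetz pencil on a $K3$" you propose to pin down the $\delta_0$-contributions sweeps through the interior of moduli, so computing its intersection with $\zz_g$ is no easier than computing $[\zz_g]$ itself; the paper instead uses pencils $F_0$, $G_0$, $H_0$ lying entirely inside the boundary, where the intersection with $\zz_g$ reduces to explicit limit-linear-series enumeration.
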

For low genus, $\cZ_g$ specializes to well-known geometric loci. For instance $\cZ_3$ is the divisor of hyperflexes on plane quartics. In particular,  Theorem \ref{degen} yields the formula
$$\pi_*(\zz_3)\equiv 308\lambda-32\delta_0-76\delta_1\in \mathrm{Pic}(\mm_3),$$
for the class of quartic curves having a hyperflex. This matches \cite{Cu} formula (5.5). Moreover, one has the following relation in $\mathrm{Pic}(\mm_3)$
$$\Bigl[\{[C]\in \cM_3: \exists x\in C \mbox{ with } 4x\equiv K_C\}\Bigr]^{-}\equiv 8\cdot \mm_{3, 2}^1 +\pi_*(\zz_3),$$
where $\mm_{3, 2}^1\equiv 9\lambda-\delta_0-3\delta_1$ is the hyperelliptic class and the multiplicity $8$ accounts for the number of Weierstrass points.

We briefly explain how Theorem \ref{degen} implies that $\ss_g^-$ is of general type for $g>11$. We choose an effective divisor $D\in \mbox{Eff}(\mm_g)$ of small slope; for composite $g+1$ one can take $D=\mm_{g, d}^r$ the closure of the Brill-Noether
divisor of curves with a $\mathfrak g^r_d$, where $\rho(g, r, d)=-1$; there exists a constant $c_{g, d, r}>0$ such that \cite{EH2},
$$\mm_{g, d}^r\equiv c_{g, d, r}\Bigl((g+3)\lambda-\frac{g+1}{6}\delta_0-\sum_{i=1}^{[g/2]} i(g-i)\delta_i\Bigr)\in \mathrm{Pic}(\mm_g).$$
We form the linear combination of divisors on $\ss_g^-$
$$\frac{2}{g-2}\zz_g+\frac{3(3g-10)}{c_{g, d, r}(g-2)(g+1)}\pi^*(\mm_{g, d}^r)\equiv \frac{11g+37}{g+1}\lambda-2\alpha_0-3\beta_0-\sum_{i=1}^{[g/2]} (a_i\cdot \alpha_i+ b_i\cdot \beta_i),$$
where $a_i, b_i\geq 2$ for $i\neq 1$ and $a_1, b_1>3$ are explicitly known rational constants. The canonical class of $\ss_g^-$ is given by the Riemann-Hurwitz formula
$$K_{\ss_g^{-}}\equiv\pi^*(K_{\mm_g})+\beta_0 \equiv
13\lambda-2\alpha_0-3\beta_0-2\sum_{i=1}^{[g/2]}
(\alpha_i+\beta_i)-(\alpha_1+\beta_1),$$ and by comparison, it follows that for $g>12$ one can find a constant $\mu_g\in \mathbb Q_{>0}$ such that
$$K_{\ss_g^-}-\mu_g\cdot \lambda\in \mathbb Q_{\geq 0}\langle [\zz_g], \alpha_1,\ \beta_1, \ldots, \alpha_{[g/2]}, \beta_{[g/2]}\rangle,$$ which shows that $K_{\ss_g^-}$ is big and thus proves
Theorem \ref{kodaira}.

\vskip 3pt
For $g=12$, there is no Brill-Noether divisor, and the reasoning above shows that in order to conclude that $\ss_{12}^-$ is of general type, one needs an effective divisor $\overline{\mathfrak{D}}_{12}$ of slope $s(\overline{\mathfrak{D}}_{12})<6+\frac{12}{13}$, that is, a counterexample to the Slope Conjecture on effective divisors on $\mm_{12}$, see \cite{FP}. We define the locus
$$\mathfrak{D}_{12}:=\Bigl\{[C]\in \cM_{12}:\exists L\in W^4_{14}(C) \mbox{ with }\  \mathrm{Sym}^2 H^0(C, L)\stackrel{\mu_0(L)}\longrightarrow H^0(C, L^{\otimes 2}) \ \mbox{  not injective}\Bigr\},$$
that is, points in $\mathfrak{D}_{12}$ correspond to curves that admit an embedding $C\subset \PP^4$ with $\mathrm{deg}(C)=14$, such that $H^0(\PP^4, \mathcal{I}_{C/\PP^4}(2))\neq 0$. The computation of the class of the closure  $\overline{\mathfrak{D}}_{12}\subset \mm_{12}$ is carried out in Section 6 and it turns out that $s(\overline{\mathfrak{D}}_{12})=\frac{4415}{642}<6+\frac{12}{13}$. In particular $\mathfrak{D}_{12}$ violates the Slope Conjecture on $\mm_{12}$, and as such, it contains the locus
$\mathcal{K}_{12}:=\{[C]\in \cM_{12}: C \mbox{ lies on a } K3 \mbox{ surface}\}$.

\vskip 4pt
We discuss the structure of the paper. Section 1 is of preliminary nature and establishes basic facts about the moduli space $\ss_g^-$ which will be used both in Section 3 in the course of proving Theorem \ref{unirational}, as well as in Section 5, when calculating the class $[\zz_g]$. In Section 2, we prove Theorem \ref{uniruled}, whereas Section 3 is devoted to the construction of Mukai models for $\ss_g^-$ and to establishing Theorem \ref{unirational}. The proof of Theorems \ref{kodaira} for $g>12$ is completed in Section 5. Finally, in Section 6, we construct two counterexamples to the Slope Conjecture on $\mm_{12}$, which implies that $\ss_{12}^-$ is of general type.

\section{Families of stable spin curves}
We briefly review some relevant facts about the moduli space $\ss_g^-$ that will be used throughout the paper, see also \cite{C}, \cite{F3}, \cite{Lud} for details. As a matter of notation, we follow the convention set in \cite{FL}; if $\textbf{M}$ is a Deligne-Mumford stack, then we denote
by $\mathcal{M}$ its associated coarse moduli space. Sligthly abusing notation, if $C$ is a smooth curve of genus $g$ and $\eta\in \mbox{Pic}^{g-1}(C)$ an isolated odd theta-characteristic, that is, satisfying $h^0(C, \eta)=1$, we define the \emph{support}  $\mbox{supp}(\eta):=\mbox{supp}(D)$, where $D\in C_{g-1}$ is the unique effective divisor with $\eta=\OO_C(D)$.
An isolated theta-characteristic $\eta$ is said to be non-reduced if $\mbox{supp}(\eta)$  is a non-reduced divisor on $C$.

A connected, nodal curve $X$ is called \emph{quasi-stable}, if for any component $E\subset X$ which is isomorphic to $\PP^1$, one has that (i) $k_E:=|E\cap \overline{(X-E)}|\geq 2$, and (ii) any two rational components $E, E'\subset X$ with $k_E=k_{E'}\geq 2$ are disjoint. Such irreducible components are called \emph{exceptional}. We recall the following definition from \cite{C}:

\begin{definition}\label{stspin}
 A stable \emph{spin curve} of genus
$g$ consists of a triple $(X, \eta, \beta)$, where $X$ is a genus
$g$ quasi-stable curve, $\eta\in \mathrm{Pic}^{g-1}(X)$ is a line
bundle of total degree $g-1$ with $\eta_{E}=\OO_E(1)$ for all
exceptional components $E\subset X$, and $\beta:\eta^{\otimes
2}\rightarrow \omega_X$ is a homomorphism of sheaves which is generically
non-zero along each non-exceptional component of $X$.
\end{definition}
Sometimes the morphism $\beta\in \PP H^0(X, \omega_X\otimes \eta^{\otimes (-2)})$ appearing in the Definition \ref{stspin} is uniquely determined by $X$ and $\eta$ and is accordingly dropped from the notation. In such a case, to ease notation, we view spin curves as pairs $[X, \eta]\in \ss_g$.
It follows from the definition that if  $(X, \eta, \beta)$ is a spin curve with exceptional components
$E_1, \ldots, E_r$  and $\{p_i, q_i\}=E_i\cap \overline{(X-E_i)}$ for
$i=1, \ldots, r$, then $\beta_{E_i}=0$. Moreover,  if
$\widetilde{X}:=\overline{X-\bigcup_{i=1}^r E_i}$ (viewed as a subcurve
of $X$), then we have an isomorphism of sheaves
$
\eta^{\otimes 2}_{\widetilde{X}}\stackrel{\sim}\rightarrow
\omega_{\widetilde{X}}.
$

We denote by $\overline{\textbf{S}}_g$ the non-singular Deligne-Mumford stack of spin curves of genus $g$. Because the parity $h^0(X, \eta) \mbox{ mod 2}$ of a spin curve is invariant under deformations \cite{Mu}, the stack
$\overline{\textbf{S}}_g$  splits into two connected components $\ts_g^+$ and $\ts_g^-$ of relative degree $2^{g-1}(2^g+1)$ and
$2^{g-1}(2^g-1)$ respectively.
It is proved in \cite{C} that the coarse moduli space of $\ts_g$ is isomorphic to the normalization of $\mm_g$ in the function field
of $\cS_g$. There is a proper morphism $\pi:\ss_g\rightarrow \mm_g$ given by $\pi([X, \eta, \beta]):=[\mathrm{st}(X)]$, where $\mathrm{st}(X)$ denotes the stable model of $X$ obtained by contracting all exceptional components.

\subsection{Spin curves of compact type.} We recall the description of the pull-back divisors
$\pi^*(\Delta_i)$ for $1\leq i\leq [g/2]$. We choose a spin curve $[X, \eta, \beta]\in \pi^{-1}([C\cup_y D])$,
where $[C, y]\in \cM_{i, 1}$ and $[D, y]\in \cM_{g-i, 1}$. Then necessarily
$X:=C\cup_{y_1} E\cup_{y_2} D$, where $E$ is an exceptional
component such that $C\cap E=\{y_1\}$ and $D\cap E=\{y_2\}$.
Moreover $\eta=\bigl(\eta_C, \eta_D, \eta_E=\OO_E(1)\bigr)\in
\mbox{Pic}^{g-1}(X)$. Since $\beta_{E}=0$, it follows that  $\eta_C^{\otimes 2}=K_C$ and $\eta_D^{\otimes
2}=K_D$, that is, $\eta_C$ and $\eta_D$ are "honest" theta-characteristics on $C$ and $D$ respectively.
The condition $h^0(X, \eta)\equiv  1 \mbox{ mod } 2$
implies that $\eta_C$ and $\eta_D$ must have
opposite parities. We denote by $A_i\subset \ss_g^-$ the closure in $\ss_g^-$ of
the locus corresponding to pairs $$([C, \eta_C, y], [D, \eta_D, y])\in
\cS_{i, 1}^-\times \cS_{g-i, 1}^+,$$ and by $B_i\subset \ss_g^-$ the
closure in $\ss_g^-$ of the locus corresponding to pairs $$([C, \eta_C, y], [D,
\eta_D, y])\in \cS_{i, 1}^+\times \cS_{g-i, 1}^{-}.$$
One has the relation $\pi^*(\Delta_i)=A_i+B_i$ and clearly $\mbox{deg}(A_i/\Delta_i)=2^{g-2}(2^i-1)(2^{g-i}+1)$
and $\mbox{deg}(B_i/\Delta_i)=2^{g-2}(2^i+1)(2^{g-i}-1)$. One denotes $\alpha_i:=[A_i], \beta_i:=[B_i]\in
\mathrm{Pic}(\ss_g^-)$.

\subsection{Spin curves with an irreducible stable model.} In order to describe $\pi^*(\Delta_0)$ we pick a  point $[X, \eta, \beta]$ such that $\mbox{st}(X)=C_{yq}:=C/y\sim q$,
where $[C, y, q]\in \cM_{g-1, 2}$ is a general point of $\Delta_0$. Unlike the case of curves of compact type, here there are two possibilities
depending on whether $X$ possesses an exceptional component or not.
If $X=C_{yq}$ and $\eta_C:=\nu^*(\eta)$ where $\nu:C\rightarrow X$
denotes the normalization map, then $\eta_C^{\otimes 2}=K_C(y+q)$.
For each choice of $\eta_C\in \mathrm{Pic}^{g-1}(C)$ as above, there
is precisely one choice of gluing the fibres $\eta_C(y)$ and
$\eta_C(q)$ such that $h^0(X, \eta)\equiv 1\mbox{ mod }2$. We denote by $A_0$ the
closure in $\ss_g^-$ of the locus of those points $[C_{yq}, \eta_C\in
\sqrt{K_C(y+q)}]$ with $\eta_C(y)$ and $\eta_C(q)$ glued as above. One has that
$\mbox{deg}(A_0/\Delta_0)=2^{2g-2}$.

If $X=C\cup_{\{y, q\}} E$ where $E$ is an exceptional component, then since $\beta_{E}=0$
it follows that $\beta_{C}\in H^0(C, \omega_{X| C}\otimes \eta_C^{\otimes (-2)})$ must vanish at both
$y$ and $q$ and then for degree reasons $\eta_C:=\eta\otimes \OO_C$ is a theta-characteristic on $C$.
The condition $H^0(X, \omega)\cong H^0(C, \omega_C)\equiv 1 \mbox{ mod 2}$ implies that $[C,
\eta_C]\in \cS_{g-1}^{-}$. In an \'etale neighborhood of a point $[X, \eta, \beta]$,
the covering $\pi$ is given
by $$(\tau_1, \tau_2, \ldots, \tau_{3g-3})\mapsto (\tau_1^2, \tau_2, \ldots, \tau_{3g-3}),$$ where one identifies  $\mathbb C_{\tau}^{3g-3}$
with the versal deformation space of $(X, \eta, \beta)$ and the hyperplane $(\tau_1=0)\subset \mathbb C_{\tau}^{3g-3}$ denotes the locus of spin curves where the exceptional component $E$ persists. This discussion shows that $\pi$ is simply branched over $\Delta_0$ and we denote the ramification divisor by
$B_0\subset \ss_g^-$, that is, the closure of the locus of spin curves
$[C\cup_{\{y, q\}} E, (C, \eta_C)\in \cS_{g-1}^-,\ \eta_E=\OO_E(1)]$. If
$\alpha_0=[A_0]\in \mbox{Pic}(\ss_g^-)$ and $\beta_0=[B_0]\in
\mbox{Pic}(\ss_g^-)$,  we then have the relation
\begin{equation}\label{del0}
\pi^*(\delta_0)=\alpha_0+2\beta_0.
\end{equation}

We define several test curves in the boundary of $\ss_g^-$ which will be later used to compute divisor classes
on the moduli space.

\subsection{The family $F_i$.} We fix $1\leq i\leq [g/2]$ and construct a covering family for the boundary divisor $A_i$.
We fix general  curves $[C]\in \cM_i$ and $[D, q]\in \cM_{g-i, 1}$ as well as an odd theta-characteristic $\eta_C^-$ on $C$ and
an even theta-characteristic $\eta_D^+$ on $D$. If $E\cong \PP^1$ is a fixed exceptional component, we define the family of spin curves
$$F_i:=\Bigl\{[C\cup_y \cup E\cup _q D, \eta]: \eta_C=\eta_C^-, \eta_E=\OO_E(1), \eta_D=\eta_D^+, E\cap C=\{y\}, E\cap D=\{q\}\Bigr\}_{y\in C}.$$
One has that $F_i\cdot \beta_i=0$ and then $F_i\cdot \alpha_i=-2i+2$; furthermore $F_i$ has intersection number zero with the remaining generators of
$\mbox{Pic}(\ss_g^-)$.

\subsection{The family $G_i$.}
As above, we fix an integer $1\leq i\leq [g/2]$ and curves $[C]\in \cM_i$ and $[D, q]\in \cM_{g-i, 1}$. This time we choose an even theta-characteristic $\eta_C^+$ on $C$
and an  odd theta-characteristic $\eta_D^-$ on $D$. The following family covers the divisor $B_i$:
$$G_i:=\Bigl\{[C\cup_y \cup E\cup _q D, \eta]: \eta_C=\eta_C^+, \eta_E=\OO_E(1), \eta_D=\eta_D^-, E\cap C=\{y\}, E\cap D=\{q\}\Bigr\}_{y\in C}.$$
Clearly $G_i\cdot \alpha_i=0$, $G_i\cdot \beta_i=2-2i$ and $G_i\cdot \lambda=G_i\cdot \alpha_j=G_i\cdot \beta_j=0$ for $j\neq i$.

\subsection{Two elliptic pencils.}
The boundary divisor $\Delta_1\subset \mm_g$ is covered by a standard elliptic pencil $R$ obtained by attaching to a fixed general
pointed curve $[C, y]\in \cM_{g-1, 1}$ a pencil of plane cubic curves $\{E_{\lambda}=f^{-1}(\lambda)\}_{\lambda\in \PP^1}$ where $f:\mathrm{Bl}_9(\PP^2)\rightarrow \PP^1$. The points of attachment on the elliptic pencil are given by a section $\sigma:\PP^1\rightarrow \mathrm{Bl}_9(\PP^2)$ given by one of the base points of the pencil of cubics. We lift this pencil in two possible ways to the space $\ss_g^-$,
depending on the parity of the theta-characteristic on the varying elliptic tail. We fix an even theta-characteristic $\eta_C^{+}\in
\mbox{Pic}^{g-2}(C)$  and
$E\cong \PP^1$ will again denote an exceptional component. We define
the family
$$F_0:=\Bigl\{[C\cup_{q} E\cup_{\sigma(\lambda)} f^{-1}(\lambda),\
 \ \eta_C=\eta_C^{+},\  \eta_E=\OO_E(1),\ \
\eta_{f^{-1}(\lambda)}=\OO_{f^{-1}(\lambda)}]: \lambda\in
\PP^1\Bigr\}\subset \ss_g^-.$$ Since $F_0\cap B_1=\emptyset$, we find
that $F_0\cdot \alpha_1=\pi_*(F_0)\cdot \delta_1=-1$. Similarly,
$F_0\cdot \lambda=\pi_*(F_0)\cdot \lambda=1$ and obviously $F_0\cdot
\alpha_i=F_0\cdot \beta_i=0$ for $2\leq i\leq [g/2]$. For each of
the $12$ points $\lambda_{\infty}\in \PP^1$ corresponding to
singular fibres of $R$, the associated $\eta_{\lambda_{\infty}}\in
\overline{\mathrm{Pic}}^{g-1}(C\cup E \cup
f^{-1}(\lambda_{\infty}))$ are actual line bundles on $C\cup E\cup
f^{-1}({\lambda_{\infty}})$, that is, we do not have to blow-up the
extra node. Thus we obtain that $F_0\cdot \beta_0=0$ and then $F_0\cdot \alpha_0=\pi_*(F_0)\cdot \delta_0=12$.

A second lift of the elliptic pencil to $\ss_g^-$ is obtained by choosing an odd theta-characteristic
$\eta_C^-\in \mbox{Pic}^{g-2}(C)$ whereas on $E_{\lambda}$ one takes each of the $3$ possible even theta-characteristics, that is,
$$G_0:=\Bigl\{\bigl[C\cup_q E\cup_{\sigma(\lambda)} f^{-1}(\lambda), \ \eta_C=\eta_C^{-},
\  \eta_E=\OO_E(1), \eta_{f^-1(\lambda)}\in \gamma^{-1}
[f^{-1}(\lambda)]\bigr]:\lambda\in \PP^1\Bigr\},$$ where $\gamma:\ss_{1, 1}^+\rightarrow \mm_{1, 1}$ is the projection of degree $3$.
Since
$\pi_*(G_0)=3R\subset \Delta_1$, we obtain that $G_0\cdot \lambda=3$. Obviously
$G_0\cdot \alpha_1=0$, hence $G_0\cdot
\beta_1=\pi_*(G_0)\cdot \delta_1=-3$. The map $\gamma:\ss_{1,
1}^+\rightarrow \mm_{1, 1}$ is simply ramified over the point
corresponding to $j$-invariant $\infty$. Hence, $G_0\cdot
\alpha_0=12$ and $G_0\cdot \beta_0=12$.

\subsection{A covering family in $B_0$} We fix a general pointed spin curve $[C, q, \eta_C^-]\in \cS_{g-1, 1}^-$
and as usual $E\cong \PP^1$ denotes an exceptional component. We construct a family of spin curves $H_0\subset B_0$ with general member
$$\bigl[C\cup_{\{y, q\}} E, \ \eta_{C}=\eta_C^-, \ \eta_{E}=\OO_E(1)\bigr]_{y\in C}\subset \ss_g^-$$
and with special fibre corresponding to $y=q$ being the odd spin curve with support $$C\cup_q E'\cup_{q'} E_2\cup _{\{y_2, q_2\}} E,$$ where
$E'$ and $E_2$ are both smooth rational curves and $y_2, q_2\in E$, $E_2\cap E=\{y_2, q_2\}$, while $E_2\cap E'=\{q'\}$. The stable model of this curve is $C\cup_q \bigl(\frac{E_2}{y_2\sim q_2}\bigr)$, having an elliptic tail of $j$-invariant $\infty$. The underlying line bundle $\eta\in \mbox{Pic}^{g-1}(C\cup E'\cup E_2\cup E)$ satisfies $\eta_{C}=\eta_C^-, \eta_{E'}=\OO_{E'}(1), \eta_{E}=\OO_E(1)$ and, for degree reasons,
$\eta_{E_2}=\OO_{E_2}(-1)$. We have the following relations for the numerical parameters of $H_0$:
$$H_0\cdot \lambda=0, \ H_0\cdot \beta_0=1-g, \ H_0\cdot \alpha_0=0,\ H_0\cdot \beta_1=1, \ H_0\cdot \alpha_1=0.$$
(The only non-trivial calculation here uses that $H_0\cdot \beta_0=\pi_*(H_0)\cdot \delta_0/2=1-g$, cf. \cite{HM}).

\section{Theta pencils on K3 surfaces.}
In this section we prove Theorem \ref{uniruled}. As usual, we denote by $\F_g$ the moduli space of polarized $K3$ surfaces $[X, H]$, where $X$ is a $K3$ surface and $H\in \mbox{Pic}(X)$ is a (primitive) polarization of degree $H^2=2g-2$, see \cite{M4}. For an integer $0\leq \delta\leq g$, we introduce the universal \emph{Severi variety} consisting of pairs
$$\mathcal{V}_{g, \delta}:=\Bigl\{\bigl([X, H], C\bigr):
[X, H]\in \F_g \mbox{ and } C\in |\OO_X(H)| \mbox{ is an integral } \delta-\mbox{nodal curve}\Bigr\}.$$
If $\sigma:\mathcal{V}_{g, \delta}\rightarrow \F_g$ is the obvious projection, we set $V_{g, \delta}(|H|):=\sigma^{-1}([X, H])$. It is known that every irreducible component of $\mathcal{V}_{g, \delta}$ has dimension $19+g-\delta$ and maps dominantly onto $\F_g$. It is conjectured that $\mathcal{V}_{g, \delta}$ is irreducible. This is established in \cite{CD} in the range $g\leq 9$ and $g=11$.

For a point $[X, H]\in \F_g$, we consider a pencil of curves
$ P\subset |H|$, and denote by $Z$ the base locus of $P$. We assume that a general member $C \in P$ is a nodal integral curve. It follows that $C - Z$ is smooth and that
$
S :=\mathrm{sing}(C)
$
is a, possibly empty, subset of $Z$. Let
$
\epsilon: X':=\mathrm{Bl}_S(X) \to X
$
be the blow-up of $X$ along the locus $S$ of nodes,  and denote by $E$ the exceptional divisor of $\epsilon$. Let
$$
P' \subset |\epsilon^*H \otimes \OO_{X'}(-2E)|
$$
be the strict transform of $P$ by $\epsilon$, and  $Z'$ its base locus. Since a general member $C \in P$ is nodal precisely along $S$, a general curve $C' \in P'$ is smooth. We view $h':=Z'+E\cdot C'$ as a divisor on the smooth curve $C'$. By the adjunction formula, $h' \in |\omega_{C'}|$.
 \begin{definition}\label{thpe} We say that $P$ is a \emph{theta pencil}, if $h'$ has even multiplicity at each of its points, that is, $\mathcal O_{C'}(\frac 12h')$ is an odd theta-characteristic for every smooth curve $C' \in P'$.
 \end{definition} \par \noindent
The definition  implies that the intersection multiplicity of two curves in $P$ is even at each point $p \in \mathrm{supp}(Z)$. For every pair $[X,H]\in \F_g$ we have that:
\begin{proposition} Every smooth curve $C \in |H|$ belongs to a theta pencil.
\end{proposition}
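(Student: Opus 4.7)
The plan is to construct the pencil explicitly starting from the choice of an odd theta-characteristic on $C$. For $g\geq 1$, fix any odd theta-characteristic $\eta$ on $C$; since its parity is $h^0(\eta)\bmod 2=1$, there exists a nonzero section $\sigma\in H^0(C,\eta)$ with effective divisor $D$ of degree $g-1$. Then $\sigma^{\otimes 2}\in H^0(C,\omega_C)$ has divisor exactly $2D$.

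The next step is to lift this to $X$ using the adjunction sequence $0\to \OO_X\to \OO_X(H)\to \omega_C\to 0$: since $H^1(X,\OO_X)=0$ for a $K3$ surface, the restriction $H^0(X,H)\to H^0(C,\omega_C)$ is surjective. A lift of $\sigma^{\otimes 2}$ gives a section $\tilde\sigma\in H^0(X,H)$ whose zero locus $C_0\in|H|$ satisfies $C_0\cap C\supseteq 2D$ as subschemes of $X$; since $\mathrm{length}(2D)=2g-2=H^2=C\cdot C_0$, this containment is an equality. Setting $P:=\PP\langle C,C_0\rangle\subset|H|$, the base scheme of the pencil is exactly $Z=C\cap C_0=2D$, of length $2$ at each of the $g-1$ points of $\mathrm{supp}(D)$.

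It then remains to verify the theta-pencil condition for the general member. By Bertini, the singular locus of the general member $C'\in P$ is contained in $\mathrm{supp}(Z)=\mathrm{supp}(D)$, and a local analysis at each $p\in\mathrm{supp}(D)$ handles the base points: in coordinates $(u,v)$ on $X$ with $C=\{v=0\}$, the ideal of $Z$ at $p$ is $(v,u^2)$, so $C_0$ has local equation $av+bu^2$ with $b$ a unit, and the partial derivatives of $(\lambda+\mu a)v+\mu bu^2$ vanish at $p$ for exactly one value of the pencil parameter. Hence a general $C'\in P$ is smooth everywhere, $S=\emptyset$, the blow-up $\epsilon$ is trivial, and $h'=Z|_{C'}$ has multiplicity $2$ at each of its $g-1$ support points; by adjunction it lies in $|\omega_{C'}|$, so $\OO_{C'}(h'/2)$ is a theta-characteristic on $C'$. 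Finally, deformation-invariance of the parity, together with the observation that on $C$ itself this theta-characteristic is $\OO_C(D)=\eta$ (chosen odd), ensures that every smooth member of $P$ carries an odd theta-characteristic, so $P$ is a theta pencil through $C$. The main obstacle is the local pencil analysis at the base points; everything else is formal consequence of adjunction and the surjectivity of restriction.
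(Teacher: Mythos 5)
Your proof is correct and arrives at the same pencil as the paper, but by a more explicit route. The paper simply takes $d$ to be the divisor of a theta-characteristic with $h^0(C,\OO_C(d))=1$ and declares $\PP H^0(X,\mathcal{I}_{d/X}(H))$ to be the theta pencil; the $h^0=1$ hypothesis is what forces the scheme-theoretic base locus to be all of $2d$ rather than just $d$ (any $C'\neq C$ in the pencil cuts out a canonical divisor $d+e$ on $C$ with $e\equiv d$, and $h^0=1$ gives $e=d$), and it also guarantees that $d$ imposes independent conditions so that one indeed gets a pencil. You instead produce the second generator $C_0$ concretely by lifting $\sigma^{\otimes 2}$ through the surjection $H^0(X,H)\twoheadrightarrow H^0(C,\omega_C)$, obtaining $C\cap C_0=2D$ by construction; this makes the $h^0=1$ hypothesis unnecessary for the base-locus computation and supplies the smoothness and parity verifications that the paper leaves tacit. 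The one place you should be slightly more careful: you implicitly assume $D=\mathrm{div}(\sigma)$ is reduced (ideal $(v,u^2)$ at each base point, ``multiplicity $2$ at each of $g-1$ points''). For an arbitrary smooth $C\in|H|$ the effective divisor of an odd theta-characteristic section need not be reduced; at a point of $D$ of multiplicity $m$ the base-locus ideal is $(v,u^{2m})$ and your local computation goes through verbatim to show again that exactly one member of the pencil is singular there, so the general member of $P$ is still smooth and $h'=Z$ has even (though not necessarily equal to $2$) multiplicity at each point — which is all that Definition 3.1 requires. With that rephrasing the argument is complete.
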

\begin{proof} Let $\eta$ be an odd theta-characteristic with $h^0(C, \eta)=1$ and write $\eta=\OO_C(d)$, with $d\in C_{g-1}$. Then $\mathbf PH^0\bigl(X, \mathcal I_{d/X}(H)\bigr)$ is a theta pencil. \end{proof}

We can reverse the construction of a theta pencil, starting instead with the normalization of a nodal section of a $K3$ surface. Suppose $$t:=[C', x_1, y_1, \ldots, x_{\delta}, y_{\delta},\  \eta]\in \cM_{g-\delta, 2\delta}\times_{\cM_{g-\delta}} \cS_{g-\delta}^-$$ is a $2\delta$-pointed curve $C'$ together with an isolated odd theta-characteristic $\eta$, such that:

\begin{enumerate}
\item $h^0\Bigl(C', \eta\bigl(-\sum_{i=1}^\delta (x_i+y_i)\bigr)\Bigr)\geq 1$; we write $\eta=\OO_{C'}\bigl(\sum_{i=1}^{\delta}(x_i+y_i)+d\bigr)$, where $d \in C'_{g-3\delta-1}$ is the residual divisor.
\item There exists a polarized $K3$ surface $[X, H]\in \F_g$ and a map $f:C'\rightarrow X$, such that $f(x_i)=f(y_i)=p_i$ for all $i=1, \ldots, \delta$ , \ $f_*(C')\in |H|$, and moreover $f:C'\rightarrow C$ is the normalization map of the $\delta$-nodal curve $C:=f(C')$.
\end{enumerate}

\noindent If $\epsilon:X'\rightarrow X$ is the blow-up of $X$ at the points $p_1, \ldots, p_{\delta}$ and $E\subset X'$ denotes the exceptional divisor, we may view $C'\subset X'$, as a smooth curve in the linear system  $|\epsilon^*H\otimes \OO_{X'}(-2E)|$. Note that
$$\OO_{C'}(C')=K_{C'}\Bigl(-\sum_{i=1}^{\delta}(x_i+y_i)\Bigr)=\eta\otimes \OO_{C'}(d).$$ We  pass to cohomology in the following short exact sequences
$$0\longrightarrow \OO_{X'}\longrightarrow \I_{d/X'}(C')\longrightarrow \OO_{C'}(C')(-d)\longrightarrow 0,$$
and
$$0\longrightarrow \OO_{X'}\longrightarrow \I_{2d+\sum_{i=1}^{\delta} (x_i+y_i)/X'}(C')\longrightarrow \OO_{C'}\longrightarrow  0$$
respectively, in order to obtain that
$$\bigl|\mathcal{I}_{d/X'}(C')\bigr|=\bigl|\mathcal{I}_{2d/X'}(C')\bigr|=\bigl|\mathcal{I}_{2d+\sum_{i=1}^{\delta}(x_i+y_i)/X'}(C')\bigr|=\PP^1$$
is a theta pencil of $\delta$-nodal curves on $X$. The link between this description of a theta pencil and the one provided by Definition \ref{thpe} is given by the relation
$h'=2E\cdot C'+2d$.

If $\K_{g-\delta, \delta}^-\subset \cM_{g-\delta, 2\delta}\times_{\cM_{g-\delta}} \cS_{g-\delta}^-$ is the locus of elements $[C, (x_i, y_i)_{i=1, \ldots, \delta},\  \eta]$ satisfying conditions (i) and (ii), the previous discussion proves the following:
\begin{proposition}\label{kgd}
Every irreducible component of $\K_{g-\delta, \delta}^-$ is uniruled.
\end{proposition}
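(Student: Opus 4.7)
The plan is to reverse the theta-pencil construction of the preceding paragraph and exhibit, through every point of $\K_{g-\delta,\delta}^-$, a non-constant rational curve contained in $\K_{g-\delta,\delta}^-$. Fix $t=[C',(x_i,y_i)_{i=1,\ldots,\delta},\eta]\in \K_{g-\delta,\delta}^-$. Condition~(ii) furnishes a polarized K3 pair $[X,H]\in \F_g$ and a map $f:C'\to X$ contracting $\{x_i,y_i\}$ to a node $p_i$ of $C:=f(C')\in |H|$, and condition~(i) gives the decomposition $\mathrm{supp}(\eta)=\sum_{i=1}^{\delta}(x_i+y_i)+d$. On the blow-up $\epsilon:X'\to X$ of $X$ at $p_1,\ldots,p_\delta$, the discussion preceding the proposition provides the theta pencil
$$
P_t\ :=\ \bigl|\mathcal{I}_{d/X'}(C')\bigr|\ =\ \bigl|\mathcal{I}_{2d+\sum_i(x_i+y_i)/X'}(C')\bigr|\ \subset\ |\epsilon^*H-2E|,
$$
whose general member is a smooth curve of genus $g-\delta$.

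I then define $\rho_t:\PP^1\to \K_{g-\delta,\delta}^-$ by sending a smooth $C''\in P_t$ to the spin datum whose marked pairs are $\{x_i'',y_i''\}:=C''\cap E_{p_i}$ and whose odd theta-characteristic is $\eta'':=\OO_{C''}\bigl(\tfrac12(Z'+C''\cdot E)\bigr)$, where $Z'$ denotes the base locus of $P_t$ on $X'$. The oddness of $\eta''$ and the fact that $(C'',(x_i'',y_i''),\eta'')$ lies in $\K_{g-\delta,\delta}^-$ with the same K3 pair $(X,H)$ are exactly the content of the theta-pencil discussion applied to $P_t$; the map $\rho_t$ extends across the finitely many singular members of $P_t$ to a morphism from all of $\PP^1$ to the closure of $\K_{g-\delta,\delta}^-$, and by construction $t=\rho_t([C'])$ lies in the image.

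The remaining task is to verify that $\rho_t$ is non-constant for a general $t$ in each irreducible component. Resolving the base locus of $P_t$ produces a relatively minimal genus-$(g-\delta)$ fibration $\varphi:\widetilde{X}'\to \PP^1$ whose generic fibre is a smooth member of $P_t$, and $\widetilde{X}'$ is birational to the K3 surface $X$. If $\rho_t$ were constant then $\varphi$ would be isotrivial, so up to finite étale base change $\widetilde{X}'$ would be birational to $C''\times \PP^1$ for a fixed smooth fibre $C''$; in particular $\widetilde{X}'$ would be uniruled. Since K3 surfaces are not uniruled, this is a contradiction, so $\rho_t(\PP^1)$ is a genuine rational curve through $t$, and each irreducible component of $\K_{g-\delta,\delta}^-$ is uniruled. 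The main technical hurdle is precisely this non-isotriviality step, which is handled cleanly by the "K3 is not uniruled" argument; the remainder of the proof is bookkeeping with the theta-pencil construction already established.
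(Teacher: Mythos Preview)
Your approach matches the paper's: the proof there is literally the sentence ``the previous discussion proves the following,'' i.e.\ the theta pencil $|\mathcal{I}_{d/X'}(C')|$ constructed just before the proposition sweeps out a rational curve through $t$ in $\K_{g-\delta,\delta}^-$. You reproduce this construction and then go further by adding a non-constancy argument that the paper leaves implicit.

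That extra step, however, is where your argument has a gap. Your claim that an isotrivial fibration over $\PP^1$ becomes birational to $C''\times\PP^1$ ``up to finite \'etale base change'' does not hold as stated: $\PP^1$ is simply connected, and so is any iterated blow-up of a K3 surface, so there is no non-trivial \'etale base change available on either the base or the total space. Isotrivial fibrations over $\PP^1$ are in general of the form $(C''\times B)/G$ with $B\to\PP^1$ a \emph{ramified} Galois cover and $G\subset\mathrm{Aut}(C'')$ acting diagonally; when $g(B)\geq 1$ such a quotient need not be uniruled, so the contradiction you want does not follow. The quickest correct replacement is a Hodge-bundle computation: since $\widetilde{X}'$ is a blow-up of a K3 surface one has $\chi(\OO_{\widetilde{X}'})=2$, and for the fibration $f:\widetilde{X}'\to\PP^1$ with fibre genus $g-\delta$,
\[
\deg f_*\omega_f \;=\; \chi(\OO_{\widetilde{X}'})-\chi(\OO_{\PP^1})\cdot\chi(\OO_{C''}) \;=\; 2-\bigl(1-(g-\delta)\bigr) \;=\; g-\delta+1 \;>\; 0,
\]
so the pull-back of $\lambda$ has positive degree and the moduli map $\PP^1\to\overline{\mathcal M}_{g-\delta}$ (hence $\rho_t$) is non-constant. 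This is the $\delta$-nodal analogue of the computation $P\cdot\lambda=g+1$ that the paper records later for smooth theta pencils.
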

This implies the following consequence of Proposition \ref{dominant} to be established in the next section:
\begin{theorem}\label{uni9}
We set $g\leq 9$ and $0\leq \delta\leq (g+1)/3$. Then the variety $\K_{g-\delta, \delta}^{-}$ is non-empty, uniruled and dominates the spin moduli space
$\cS_{g-\delta}^-$.
\end{theorem}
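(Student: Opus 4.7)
My plan is to reduce the three assertions of the theorem -- non-emptiness, uniruledness, and the dominance of the forgetful map $\alpha : \K_{g - \delta, \delta}^- \to \cS_{g - \delta}^-$ -- to a single dominance claim, which is the content of Proposition \ref{dominant} to be established in the next section. Uniruledness of every irreducible component of $\K_{g - \delta, \delta}^-$ is exactly Proposition \ref{kgd}, already in hand; and since $\cS_{g - \delta}^-$ is non-empty for $g - \delta \geq 1$, dominance of $\alpha$ immediately forces non-emptiness of the source. So the entire theorem reduces to proving that $\alpha$ is dominant.

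To prove the dominance I would carry out a parameter count on the universal Severi variety $\mathcal{V}_{g, \delta}$ defined above. One has $\dim \mathcal{V}_{g, \delta} = 19 + g - \delta$, and normalization of the $\delta$-nodal section defines a rational map $\nu : \mathcal{V}_{g, \delta} \dashrightarrow \cM_{g - \delta}$. In the range $g - \delta \leq 9$, guaranteed by the hypothesis $g \leq 9$, a general curve of genus $g - \delta$ is the normalization of a $\delta$-nodal section of some polarized $K3$ of genus $g$: for $\delta = 0$ this is Mukai's theorem, and for $\delta \geq 1$ it follows from the non-emptiness of the relevant Severi locus combined with a smoothing argument. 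Hence $\nu$ is dominant, with general fibre of dimension $22 - 2(g - \delta)$. Tagging the $2\delta$ preimages of the $\delta$ nodes lifts $\nu$ to a rational map $\widetilde{\nu} : \mathcal{V}_{g, \delta} \dashrightarrow \cM_{g - \delta, 2\delta}$ whose generic fibre has dimension $22 - 2g$, still non-negative under our hypothesis.

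The remaining ingredient is condition (i): the $2\delta$ preimage points must lie in the support of an odd theta-characteristic on $C'$. For a general $[C', \eta] \in \cS_{g - \delta}^-$ the support $\mathrm{supp}(\eta)$ is a reduced divisor of degree $g - \delta - 1$, and the numerical bound $\delta \leq (g + 1)/3$ is exactly what allows one to prescribe $\delta$ pairs $(x_i, y_i)$ inside $\mathrm{supp}(\eta)$ while keeping the residual divisor $d$ of admissible degree. My strategy is to exhibit an explicit base point of $\K_{g - \delta, \delta}^-$ via the theta-pencil construction of this section -- start from $[X, H] \in \F_g$ and a smooth section $C_0 \in |H|$ with a prescribed odd theta-characteristic, then degenerate $C_0$ along its theta pencil to a $\delta$-nodal member whose nodes, by construction, lie in the support of $\eta$ -- and then to deform the resulting configuration inside $\mathcal{V}_{g, \delta}$.

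The main obstacle I foresee is the tangential check: verifying that the differential of $\alpha$ at this base point surjects onto $T_{[C', \eta]} \cS_{g - \delta}^-$, i.e.\ that as $([X, H], C)$ deforms inside $\mathcal{V}_{g, \delta}$ the induced variation of the pair (normalization, odd theta-characteristic) covers a Zariski-open subset of $\cS_{g - \delta}^-$. I would handle this by specialising to an explicit Mukai model -- for instance $V_9 \subset \PP^{15}$ when $g = 9$ -- where both the linear sections of the $K3$ and their odd theta-characteristics admit explicit parameterisations, and then propagating surjectivity by upper semicontinuity of fibre dimension across the full Mukai range $g \leq 9$.
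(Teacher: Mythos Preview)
Your reduction is exactly right: uniruledness is Proposition \ref{kgd}, non-emptiness follows from dominance, and the paper likewise treats the whole theorem as a consequence of Proposition \ref{dominant} with no further argument written out. So the overall architecture of your proposal matches the paper's.

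Where you diverge is in how much machinery you bring to the dominance of $\alpha$. The key fact you should extract from Proposition \ref{dominant} is not merely that $m_{g|D}: D \to \Delta_g^\delta$ is dominant, but that its proof shows the differential is surjective at \emph{every} point of $\cU_{g,\delta}$; hence the locus of $\delta$-nodal curves of arithmetic genus $g$ lying on a $K3$ surface is \emph{open} in $\Delta_g^\delta$. Given this, the argument is short: the condition-(i) locus $\mathcal{L} \subset \cM_{g-\delta,2\delta} \times_{\cM_{g-\delta}} \cS_{g-\delta}^-$ projects finitely onto $\cS_{g-\delta}^-$, so every component of $\mathcal{L}$ dominates $\cS_{g-\delta}^-$; and $\K_{g-\delta,\delta}^-$ is the (open) preimage in $\mathcal{L}$ of the open locus in $\Delta_g^\delta$ just described. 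Thus one only needs a single base point --- your theta-pencil degeneration supplies it --- and no differential check on $\alpha$ itself is required.

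Two of your supporting steps are loose. First, the claim that dominance of $\nu: \mathcal{V}_{g,\delta} \dashrightarrow \cM_{g-\delta}$ ``for $\delta \geq 1$ follows from non-emptiness of the relevant Severi locus combined with a smoothing argument'' does not work as stated: non-emptiness of $V_{g,\delta}(|H|)$ for a fixed $K3$ says nothing about the moduli of the normalization, and smoothing a $\delta$-nodal curve produces a genus-$g$ curve, not a general genus-$(g-\delta)$ curve. The actual input is the infinitesimal computation in the proof of Proposition \ref{dominant}. Second, ``propagating surjectivity by upper semicontinuity of fibre dimension across the full Mukai range $g \leq 9$'' is not meaningful --- the Mukai varieties $V_g$ for different $g$ do not sit in a common family over which semicontinuity applies. (Incidentally, $V_9 = SG(3,6) \subset \PP^{13}$, not $\PP^{15}$; it is $V_7$ that lives in $\PP^{15}$.) These are not fatal, but they indicate you are reaching for ad hoc tools where Proposition \ref{dominant} already does the work cleanly.
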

\begin{definition} We say that a theta pencil $P$ is $\delta$-\emph{nodal} if its general member is a $\delta$-nodal curve, that is, $|S|=\delta$. We say that $P$ is \emph{regular} if the support $\mathrm{supp}(Z)$ of its base locus consists of $g-1$ distinct points. \end{definition}

A $\delta$-nodal theta pencil $P$ on a $K3$ surface $X$, induces a map
$$m': P'\cong \PP^1 \to \ss^-_{g-\delta},$$
obtained by sending a general $C' \in P'$ to the moduli point  $\bigl[C', \OO_{C'}\bigl(\frac{1}{2}h'\bigr)\bigr]\in \ss_{g-\delta}^-$.
We note in passing that a theta pencil also induces a map
$
m: P' \to \overline {\mathcal S}^-_g
$
defined as follows. Consider the pencil $E + P'$ having fixed component $E$. The general member is a quasi-stable curve $D \in (E + P')$ of arithmetic genus $g$, with exceptional components $\{E_i\}_{i=1, \ldots, \delta}$ corresponding to the exceptional divisors of the blow-up $\epsilon: X'\rightarrow X$. Then
 $$
m(C):=\Bigl[ C\cup \bigl(\cup_{i=1}^{\delta} E_i\bigr), \ \eta_{E_i}=\OO_{E_i}(1), \ \eta_{C'}=\OO_{C'}\bigl(\frac{1}{2} h'\bigr)\Bigr]\in \ss_g^-.
 $$ These pencils will be used extensively in the proof of Theorem \ref{unirational}.


Assume that $[X,H]\in \F_g$ is a general point, in particular $\mbox{Pic}(X)=\mathbb Z\cdot H$. Then every smooth curve $C \in |H|$ is Brill-Noether general, see \cite{La}, which implies that  $h^0(C, \eta) = 1$, for
every odd theta-characteristic $\eta$ on $C$.
Theta pencils with smooth general member define a locally closed subset in the Grassmannian $G(2, H^0(S, \OO_S(H))$ of lines in $|H|$. Let
$
\Theta^-(X,H)
$
be its Zariski closure in $G(2, H^0(S, \OO_S(H))$.
\begin{proposition} $\Theta^-(X,H)$ is pure of dimension $g-1$. \end{proposition}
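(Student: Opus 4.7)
The plan is to exhibit $\Theta^-(X,H)$ as the Zariski closure of the image of a natural morphism from the universal odd spin curve over $|H|^{\mathrm{sm}}$, and to compute its dimension via a fiber count.

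First, form the \'etale incidence variety
$$J := \bigl\{(C, \eta) : C \in |H| \ \text{smooth},\ \eta \ \text{an odd theta-characteristic on } C \bigr\},$$
which is a finite \'etale cover of $|H|^{\mathrm{sm}}$ of degree $2^{g-1}(2^g-1)$, hence equidimensional of dimension $g$. Lazarsfeld's theorem \cite{La}, applicable because $\mathrm{Pic}(X) = \mathbb{Z}\cdot H$, guarantees that every smooth $C \in |H|$ is Brill--Noether general; in particular $h^0(C,\eta)=1$ for each $(C, \eta) \in J$, so the effective divisor $d \in |\eta| \subset C_{g-1}$ is unique.

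Next I would show that the linear system $\mathbb{P} H^0(X, \mathcal{I}_{d/X}(H))$ has dimension exactly $2$. The adjunction restriction $H^0(X, \mathcal{O}_X(H)) \to H^0(C, \omega_C)$ is surjective with one-dimensional kernel spanned by the equation of $C$, so imposing vanishing along $d$ yields
$$h^0(X, \mathcal{I}_{d/X}(H)) = 1 + h^0(C, \omega_C(-d)) = 1 + h^0(C, \eta) = 2.$$
Hence the assignment $(C, \eta) \mapsto \mathbb{P}H^0(X, \mathcal{I}_{d/X}(H))$ defines a morphism $\phi : J \to G(2, H^0(X, \mathcal{O}_X(H)))$, whose image is precisely the locally closed subset of theta pencils with smooth general member; by definition, $\Theta^-(X,H) = \overline{\phi(J)}$.

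Finally, I would analyze the fibers of $\phi$. For a pencil $P = \phi(C, \eta)$, the base scheme $Z$ of $P$ has length $2g-2$ and restricts on each smooth $C' \in P$ to $2 d_{C'}$ with $\eta_{C'} = \mathcal{O}_{C'}(d_{C'})$; thus $\phi^{-1}(P)$ consists of the pairs $(C', \mathcal{O}_{C'}(d_{C'}))$ with $C' \in P$ smooth, an open subset of $P \cong \mathbb{P}^1$, pure of dimension $1$. Since $J$ is equidimensional of dimension $g$ and every non-empty fiber of $\phi$ is pure of dimension one, the constructible image $\phi(J)$ is pure of dimension $g-1$, and so is its closure $\Theta^-(X,H)$. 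The main obstacle is the dimension-$2$ bound in the pencil computation: any jump to $h^0 \geq 3$ would force some smooth $C \in |H|$ to fail Brill--Noether generality, which is precisely excluded by \cite{La}.
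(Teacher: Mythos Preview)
Your proposal is correct and follows essentially the same approach as the paper: both arguments identify the incidence variety of pairs (smooth curve in $|H|$, odd theta-characteristic) as a pure $g$-dimensional cover of $|H|^{\mathrm{sm}}$ and observe that its projection to the Grassmannian has one-dimensional fibres (your $\phi^{-1}(P)\cong P^{\mathrm{sm}}$ is exactly the paper's $\mathbb{P}^1$-bundle structure of the projectivized universal bundle $P^-(X,H)\to\Theta^-(X,H)$). Your explicit computation $h^0(X,\mathcal{I}_{d/X}(H))=2$ makes precise a step the paper leaves implicit.
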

\begin{proof} Let $f: P^-(X,H) \to |H|$ be the projection map from the projectivized universal bundle over $\Theta^{-}(X, H)$, and $V_{g, 0}(|H|) \subset |H|$ be the open locus of smooth curves. Under our assumptions $f$ has finite fibres over $V_{g, 0}(|H|)$.
Thus $P^-(X,H)$ has pure dimension $g$, and  $\Theta^-(X,H)$ has pure dimension $g-1$. \end{proof}

For a general (thus necessarily regular) theta pencil $P\in \Theta^-(X,H)$, we study in more detail the map $m:P' \to \overline {\mathcal S}^-_g$. 
Let
$
\Delta(X,H) \subset |H|
$
be the discriminant locus. Since $[X,H]\in \F_g$ is general, $\Delta(X,H)$ is an integral hypersurface parameterizing the singular elements of $|H|$. It is well-known that  $\mbox{deg } \Delta(X,H) = 6g + 18$.
\begin{proposition}\label{singthet}
Let $P \in \Theta^-(X,H)$ be a general theta pencil with base locus $Z$. Then every singular curve $C \in P$ is nodal. Furthermore,
$$
P \cdot \Delta(X,H) = 2(a_1 + \dots + a_{g-1}) + b_1 + \dots + b_{4g+20},
$$
where $a_i$ is the parameter point of a curve $A_i\in P$ having a point of $Z$ as its only singularity, and $b_j$ is the parameter point of a curve $B_j\in P$ such that $\mathrm{sing}(B_j) \subset X-Z$. Accordingly, $$P\cdot \alpha_0=4g+20 \ \mbox{ and } P\cdot \beta_0=g-1.$$
 \end{proposition}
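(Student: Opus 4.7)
The strategy combines the degree $\deg\Delta(X,H)=6g+18$ on $|H|$ with a local analysis at each base point of $P$ and a multiplicity computation via either a direct smoothing model or an Euler-characteristic argument on a resolution of the base locus.

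For general $[X,H]\in \F_g$ and general $P\in\Theta^-(X,H)$, a dimension count inside the pure $(g-1)$-dimensional variety $\Theta^-(X,H)$ shows that every singular member of $P$ is irreducible with a single ordinary node, so $P\cdot\Delta(X,H)=6g+18$ decomposes as a sum of positive multiplicities over finitely many parameter points $\{a_i\}\cup\{b_j\}$. At each base point $p_i\in\mathrm{supp}(Z)$, the theta-pencil condition $(C_1\cdot C_2)_{p_i}=2$ forces $C_1,C_2\in P$ general to be smooth and tangent at $p_i$ with common tangent direction. Choosing local coordinates $(x,y)$ with this tangent $y=0$, I can write $f_1=y$ and $f_2=y+g(x,y)$ with $\mathrm{ord}_{p_i}(g)=2$; then $\lambda f_1+\mu f_2=(\lambda+\mu)y+\mu g(x,y)$ is singular at $p_i$ exactly when $\lambda+\mu=0$, producing a unique member $A_i\in P$ that is nodal at $p_i$ for generic $g$.

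The key step is computing the local multiplicity at each $a_i$. I would either observe directly that the family $\{ty+g(x,y)=0\}_{t}$ smoothing the node near $t=0$ is locally equivalent to $x^2+y^2=s(t)$ with $s$ vanishing to order $2$ in $t$ (giving multiplicity $2$), or resolve the base locus by two successive blowups at each $p_i$: first at $p_i$, then at the tangent-direction point $p_i'$ on the first exceptional divisor, producing a fibration $\pi\colon \tilde X\to P'\cong\PP^1$ with $\chi(\tilde X)=24+2(g-1)=2g+22$ and general fiber of Euler characteristic $2-2g$. On $\tilde X$, the strict transform $E_i$ of the first exceptional is a $(-2)$-curve, while the second exceptional is a $(-1)$-section of $\pi$. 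The fiber over $a_i$ is $\tilde A_i\cup E_i$, with $E_i$ entering with multiplicity one and meeting the smooth $\tilde A_i$ transversely at the two preimages of the node; its Euler characteristic is $(4-2g)+2-2=4-2g$, so its defect is $2$. Each remaining $B_j$ is a transverse smoothing of an ordinary node and contributes defect $1$. Matching $2(g-1)+\#\{b_j\}=\chi(\tilde X)-2(2-2g)=6g+18$ forces $\#\{b_j\}=4g+20$.

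Finally, the map $m\colon P'\to\ss_g^-$ sends each $a_i$ to a quasi-stable spin curve with an exceptional component at $p_i$ (a point of $B_0$), and each $b_j$ to an irreducible spin curve satisfying the $A_0$-gluing condition on its normalization. The factor $2$ in $\pi^*(\delta_0)=\alpha_0+2\beta_0$ reflects the simple ramification of $\pi$ along $B_0$, so that $m$ itself is locally unramified at both $a_i$ and $b_j$. Combined with $P\cdot\pi^*(\delta_0)=6g+18$, this yields $P\cdot\beta_0=g-1$ and $P\cdot\alpha_0=4g+20$. The main obstacle is the multiplicity $2$ computation at each $a_i$: whichever route one takes, one must carefully verify that on the blown-up surface only the strict transform of the first exceptional enters the fiber over $a_i$, and does so with multiplicity one, in order to isolate the defect contribution of $2$ coming from the tangentially-singular member at a base point.
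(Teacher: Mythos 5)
Your local model at each base point and the Euler characteristic bookkeeping on the two--stage blowup are correct and internally consistent: the fiber over $a_i$ is indeed $\tilde A_i + E_i$ (the strict transform plus the first, now $(-2)$, exceptional) with defect $2$, the general fiber has $\chi = 2-2g$, $\chi(\tilde X)=2g+22$, and $2(g-1)+\#\{b_j\}=6g+18$ gives $\#\{b_j\}=4g+20$. Your handling of $\pi^*(\delta_0)=\alpha_0+2\beta_0$ to extract $P\cdot\alpha_0$ and $P\cdot\beta_0$ is also fine.

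However, your argument has a genuine gap at the outset: the Euler characteristic count is \emph{predicated} on knowing that every singular fiber is of one of the two described types (a curve with a single node at some $p_i\in Z$, or a curve with a single node away from $Z$). If a cuspidal member, a $2$-nodal member, or a member singular at $p_i$ but also elsewhere occurred in $P$, the defect contributions would differ and the tally would fail. You claim this nodality follows from ``a dimension count inside the pure $(g-1)$-dimensional variety $\Theta^-(X,H)$,'' but you never perform it, and it is not straightforward: $\Theta^-(X,H)$ sits inside the $2(g-1)$-dimensional Grassmannian of lines in $|H|$, so one would need to show that $\Theta^-(X,H)$ is not entirely contained in the locus of pencils meeting the non-nodal stratum of $\Delta(X,H)$, which is not automatic. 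Your argument is therefore circular: the nodality input your Euler characteristic computation needs is exactly the assertion you set out to verify.

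The paper handles this differently and avoids the circularity. It observes that the intersection multiplicity $\bigl(P\cdot\Delta(X,H)\bigr)_{A_i}$ is \emph{at least} $2$, because any pencil with $p_i$ in its base locus lies in the hyperplane $\{C : p_i \in C\}$ which is tangent to $\Delta(X,H)$ at the $1$-nodal curve $A_i$; this yields an a priori inequality $2(g-1)+\#\{b_j\}\le 6g+18$ with equality equivalent to the desired transversality and nodality. That equality is then an open condition on the family of pairs $(P,[X,H])$, and by irreducibility of $\F_g$ it suffices to exhibit a single polarized $K3$ where it holds. The paper produces such an example using a hyperelliptic $K3$ surface $\rho:X\to\mathbb F_a$, branched over a curve $B\in|-2K_{\mathbb F}|$, where the theta pencil is the pullback of a pencil of hyperplane sections with simple ramification on $B$ and the requisite nodality can be checked by hand. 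If you want to salvage your Euler-characteristic approach, you would need to supply exactly this kind of explicit example (or a genuine proof of the dimension claim) to legitimize the assumption that a general theta pencil is ``defect-clean.''
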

\begin{proof}  We set $\mbox{supp}(Z)=\{p_1, \ldots, p_{g-1}\}$. Since $P$ is regular, for $i=1, \ldots, g-1$, there exists a unique curve $A_i \in P$
singular at $p_i$. Moreover, for degree reasons, $p_i$ is the unique double point of $A_i$. Each
pencil $T \subset |H|$ having $p_i$ in its base locus is a tangent line to $\Delta(X,H)$ at $A_i$. Hence the intersection multiplicity  $\bigl(P\cdot \Delta(X,H)\bigr)_{A_i}$ is at least $2$. It follows that the assertion to prove is open on any family of pairs $(P,[X,H])$ such that $P \in \Theta^-(X,H)$. Since $\F_g$ is irreducible, it suffices to produce
one polarized $K3$ surface $(X, H)$ satisfying this condition.

\vskip 4pt

For this purpose, we use \emph{hyperelliptic} polarized $K3$ surfaces $(X,H)$. Consider a rational normal scroll $\mathbb{F}:=\mathbb{F}_a \subset \mathbf P^g$, where $a\in \{0, 1\}$ and $g=2n+1-a$. A general section $R \in |O_{\mathbb F}(1)|$ is a rational normal
curve of degree $g-1$. From the exact sequence
$$
0 \to \mathcal O_{\mathbb F}(-2K_{\mathbb F}-R)  \to \mathcal O_{\mathbb F}(-2K_{\mathbb F}) \to  \mathcal O_R(-2K_{\mathbb F}) \to 0,
$$
one finds that there exist a smooth curve $B \in |-2K_{\mathbb F}|$ and distinct points $o_1, \ldots, o_{g-1}\in B$ such that
the pencil $Q \subset |\OO_{\mathbb F}(R)|$ of hyperplane sections through $o_1, \dots, o_{g-1}$ cuts out a pencil with simple ramification on $B$.

Let $\rho: X \to \mathbb F$ be the double covering of $\mathbb F$ branched along $B$. Then $X$ is a K3 surface and $|H|:=|\OO_X(\rho^*R)|$ is a hyperelliptic linear system on $X$ of genus $g$.  Then $\rho^*(Q)$ is a regular theta pencil on $X$ with the required properties. \end{proof}

Since theta pencils cover $\ss_g^{-}$ when $g\leq 11$ and $g\neq 10$, the following consequence of Proposition \ref{singthet} is very suggestive concerning the variation of $\kappa(\ss_g^-)$ as $g$ increases, in particular, in highlighting the significance of the case $g=12$.

\begin{corollary}\label{Kint}
With the same notation as above, we have that $P\cdot K_{\ss_g^-}=2g-24$. In particular general theta pencils of genus $g<12$ are $K_{\ss_g^-}$-negative.
\end{corollary}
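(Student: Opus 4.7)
The plan is to compute $P\cdot K_{\ss_g^-}$ by evaluating $P$ against each class appearing on the right side of the Riemann-Hurwitz formula
$$K_{\ss_g^{-}}\equiv 13\lambda-2\alpha_0-3\beta_0-2\sum_{i=1}^{[g/2]}(\alpha_i+\beta_i)-(\alpha_1+\beta_1)$$
recalled in Section 1, and summing the results.

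The intersections with the boundary classes are essentially already contained in Proposition \ref{singthet}. The $4g+20$ nodal fibers $B_j$ of $P$ have their node away from the base locus $Z$, where the total space of the pencil admits a local Lefschetz description $XY=t$; such a fiber produces a spin curve of type $A_0$, so $P\cdot\alpha_0=4g+20$. At each of the $g-1$ tangential intersections of $P$ with $\Delta(X,H)$, the special fiber $A_i$ has a node precisely at $p_i\in\mathrm{supp}(Z)$, and the resolution of the base locus forces an exceptional component carrying $\OO_E(1)$ into the stable model, placing the fiber in $B_0$; accordingly $P\cdot\beta_0=g-1$. For $i\geq 1$ the stable model of every singular fiber is an irreducible $1$-nodal curve of arithmetic genus $g$, so $P\cdot\alpha_i=P\cdot\beta_i=0$.

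The core step is the computation of $P\cdot\lambda$. I would resolve the theta pencil by performing two successive blow-ups at each base point $p_j\in\mathrm{supp}(Z)$: the first separates a generic fiber from the exceptional divisor but, because of the theta-pencil tangency, leaves the strict transforms meeting at a single common point of it, which the second blow-up removes. The outcome is a smooth surface $\sigma:\tilde X\to X$ together with a fibration $f:\tilde X\to\PP^1$ whose general fiber is a smooth curve of genus $g$. Since $\tilde X$ is a blow-up of a $K3$ surface, $\chi(\OO_{\tilde X})=\chi(\OO_X)=2$. Using $f_*\OO_{\tilde X}=\OO_{\PP^1}$ together with the relative duality isomorphism $R^1f_*\OO_{\tilde X}\cong(f_*\omega_f)^{\vee}$, the Leray spectral sequence yields
$$2=\chi(\OO_{\tilde X})=1-\chi\bigl(\PP^1,R^1f_*\OO_{\tilde X}\bigr)=1-\bigl(g-P\cdot\lambda\bigr),$$
from which $P\cdot\lambda=g+1$. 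Substituting into the canonical class formula gives
$$P\cdot K_{\ss_g^-}=13(g+1)-2(4g+20)-3(g-1)=2g-24,$$
as required.

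The main technical obstacle is the description of the resolution of the base locus: the theta-pencil tangency condition is precisely what demands two blow-ups at each $p_j$ and is responsible for the reducible $A_i$-fiber acquiring an exceptional $(-2)$-component. This same structure underlies an alternative verification of $P\cdot\lambda$ via Mumford's formula $12\lambda=\omega_f^2+\delta$, where a direct calculation on $\tilde X$ yields $\omega_f^2=6g-6$ and $\delta=6g+18$ total nodes in the fibers of $f$, in agreement with the result above.
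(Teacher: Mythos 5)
Your proof is correct and takes essentially the same route as the paper: plug the intersection numbers $P\cdot\lambda$, $P\cdot\alpha_0$, $P\cdot\beta_0$ (with $P\cdot\alpha_i=P\cdot\beta_i=0$ for $i\geq 1$) into the Riemann--Hurwitz formula for $K_{\ss_g^-}$. The paper's proof is just a one-line pointer to these three numbers; you have usefully filled in the derivation of $P\cdot\lambda=g+1$ (via the two-fold blow-up resolution of the theta pencil, $\chi(\OO_{\tilde X})=2$, and relative duality), and your Mumford-formula cross-check $12\lambda=\omega_f^2+\delta=(6g-6)+(6g+18)$ is a correct independent confirmation.
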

\begin{proof} Use that $(P\cdot \lambda)_{\ss_g^-}=(\pi_*(P)\cdot \lambda)_{\mm_g}=g+1, \ P\cdot \alpha_0=4g+20$ and $P\cdot \beta_0=g-1$.
\end{proof}
\begin{proposition} The locally closed set of nodal theta pencils in $\Theta^-(X,H)$ is non empty. If $P$ is a general nodal theta pencil, then  a general
curve $C \in P$ has one node as its only singularity.
\end{proposition}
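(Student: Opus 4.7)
The plan is to invert the construction preceding Proposition \ref{kgd}: given $[C',x,y,\eta]\in\K_{g-1,1}^-$ with associated polarized K3 surface $(X,H)$, the theta pencil $P=|\mathcal I_{2d/X'}(\epsilon^*H-2E)|$ on the blow-up $X'=\mathrm{Bl}_p(X)$ pushes forward to a $1$-nodal theta pencil on $X$, where $p=f(x)=f(y)$ and $\eta=\OO_{C'}(x+y+d)$. Hence existence of nodal theta pencils on a general $(X,H)$ reduces to surjectivity of the forgetful map $\K_{g-1,1}^-\to\F_g$, and the second assertion will reduce to a Bertini argument on $X'$.

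For non-emptiness I would run a dimension count centred on the normalization morphism
\[
\nu:\mathcal V_{g,1}\longrightarrow \cM_{g-1,2},\qquad (X,H,C)\longmapsto (C',x,y),
\]
where $(x,y)$ are the preimages of the unique node of $C$. Since $\dim\mathcal V_{g,1}=18+g$ and $\dim\cM_{g-1,2}=3g-4$, the map $\nu$ is dominant whenever $g\leq 11$. Condition (i) -- $h^0(C',\eta(-x-y))\geq 1$, equivalently $\eta=\OO_{C'}(x+y+d)$ for an effective $d$ of degree $g-4$ -- is determinantal of codimension $2$ in $\cS_{g-1,2}^-:=\cM_{g-1,2}\times_{\cM_{g-1}}\cS_{g-1}^-$ (since $\chi(\eta(-x-y))=-2$), and so cuts out a sublocus of dimension $3g-6$ whose projection to $\cM_{g-1,2}$ meets the dense image of $\nu$. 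Pulling this intersection back to $\mathcal V_{g,1}$ produces points of $\K_{g-1,1}^-$; by irreducibility of $\F_g$ combined with a specialization/monodromy argument, the image of $\K_{g-1,1}^-$ in $\F_g$ is all of $\F_g$, giving the first assertion.

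For the second statement, fix a general $P$ arising in this way with common node $p$, and consider the strict transform pencil $P'\subset|\epsilon^*H-2E|$ on $X'$. Its base scheme, of length $(\epsilon^*H-2E)^2=2g-6$, takes the form $2d+(q_1+q_2)$ where $\{q_1,q_2\}=\tilde C\cap E$ are the two points common to all $\tilde C\in P'$; all of these lie in the smooth locus of $X'$. Bertini on the smooth projective surface $X'$ applied to $P'$ then yields a smooth generic member $\tilde C$; moreover, for a generic $P$ the two base points $q_1,q_2$ are distinct (distinctness is open in $P$, and can be realized in examples, e.g.\ via the hyperelliptic K3 of Proposition \ref{singthet}). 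Pushing down, $C=\epsilon_*\tilde C\in P$ has exactly one ordinary node at $p$ and is smooth elsewhere.

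The main obstacle is the borderline case $g=11$, where $\dim\mathcal V_{11,1}=\dim\cM_{10,2}=29$: dominance of $\nu$ must be verified by a tangent-space computation rather than a pure dimension count, and one must check that condition (i) meets the image of $\nu$ properly. If the abstract argument is delicate in some genus, one can fall back on the explicit hyperelliptic $K3$ construction in Proposition \ref{singthet}, specializing the pencil $Q$ so that two of the marked points $o_j$ collide on the branch curve $B$ to yield a $1$-nodal theta pencil, and then deform to conclude the general statement by openness.
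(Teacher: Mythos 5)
Your primary argument is structured around inverting the theta-pencil construction via the normalization map $\nu:\mathcal V_{g,1}\to\cM_{g-1,2}$ and a dimension count, whereas the paper's proof is a one-paragraph explicit construction: on the hyperelliptic $K3$ from Proposition~\ref{singthet}, one takes the pencil $Q\subset|\OO_{\mathbb F}(R)|$ of hyperplane sections through $o_1+\cdots+o_{g-3}+2o$ (with all $o_i,o\in B$ general) and checks that $\rho^*(Q)$ is a $1$-nodal theta pencil. Your ``fallback'' in the final paragraph is precisely this construction, and you describe it accurately (colliding two of the $o_j$'s to force $2o$ on $B$); had you led with that and verified that $\rho^*(Q)$ has irreducible $1$-nodal general member with even intersection multiplicities along the base locus, you would essentially have the paper's proof.

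The main line of your argument, however, has genuine gaps. First, the Proposition is stated for a general $[X,H]\in\F_g$ with no restriction on $g$, while your dimension count on $\mathcal V_{g,1}\to\cM_{g-1,2}$ only applies for $g\leq 11$; the paper's hyperelliptic construction works for all $g$. Second, even for $g\leq 11$, the step ``condition (i) cuts out a codimension-$2$ locus whose projection to $\cM_{g-1,2}$ meets the image of $\nu$'' only gives an \emph{expected}-dimension estimate for the locus $\K_{g-1,1}^-$; it does not prove nonemptiness, and nonemptiness is exactly the content of the Proposition. Your appeal to ``a specialization/monodromy argument'' to conclude that $\K_{g-1,1}^-$ dominates $\F_g$ is not substantiated. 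Third, in your Bertini step you invoke smoothness of the general member of $P'$ away from base points, but smoothness \emph{at} the base scheme $2d+(q_1+q_2)$ is where the work lies; in the paper this is handled by arranging that $Q$ cuts out a pencil with simple ramification on $B$, which you omit. The conclusion is correct and your sketch of the fallback shows you know where the proof lives, but as written the argument does not close.
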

\begin{proof} We keep the notation from the previous proof and construct a smooth curve $B \in |-2K_{\mathbb F}|$ and choose general points $o, o_1, \ldots, o_{g-3}\in B$, such that the pencil $Q \subset |\OO_{\mathbb F}(R)|$ of the hyperplane sections through $o_1 + \dots+ o_{g-3} + 2o$ cuts out a pencil with simple ramification on $B$. Then $\rho^*(Q)$ is a nodal theta pencil with the required properties.
 \end{proof}


\begin{theorem} $\ss^-_g$ is uniruled for $g \leq 11$. \end{theorem}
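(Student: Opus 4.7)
The plan is to produce a covering family of rational curves on $\ss_g^-$ via \emph{theta pencils} on polarized K3 surfaces. The argument splits naturally according to whether a general genus $g$ curve lies on a K3 surface (the cases $g \leq 9$ and $g = 11$) or not (the case $g = 10$, where this locus is divisorial in $\cM_{10}$).

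In the range $g \leq 9$ or $g = 11$, I would fix a general $[C] \in \cM_g$ embedded as a hyperplane section of a polarized K3 surface $[X, H] \in \F_g$, and choose an odd theta-characteristic $\eta$ on $C$. Setting $d := \mathrm{supp}(\eta)$, the pencil
$$P := \PP H^0\bigl(X, \I_{d/X}(H)\bigr)$$
is a regular, $0$-nodal theta pencil through $[C, \eta]$ by the Proposition on the existence of theta pencils, and the induced morphism $m': P' \cong \PP^1 \to \ss_g^-$ sends each smooth member $C' \in P'$ to the odd spin curve $\bigl[C', \OO_{C'}(\tfrac{1}{2}h')\bigr]$. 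Thus the map $\K_{g, 0}^- \to \cS_g^-$ built from theta pencils is dominant, and since $\K_{g, 0}^-$ is uniruled by Proposition \ref{kgd}, the uniruledness transfers to $\ss_g^-$.

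The residual case $g = 10$ requires a specialization argument. A general irreducible $1$-nodal curve $[C] \in \Delta_0 \subset \mm_{11}$ of arithmetic genus $11$ lies on a K3 surface $X \subset \PP^{11}$; writing $\nu: C' \to C$ for the normalization and $\nu^{-1}(p) = \{x, y\}$ for the preimages of the node, the point $[C'] \in \cM_{10}$ is general. The key claim is that $C$ can be chosen on $X$ so that $C'$ admits an odd theta-characteristic $\eta$ whose support contains $x + y$. Writing $\mathrm{supp}(\eta) = x + y + d$ and passing to the blow-up $\epsilon: X' := \mathrm{Bl}_p(X) \to X$ with exceptional divisor $E$, the pencil $|\I_{d/X'}(\epsilon^*H - 2E)|$ is then a $1$-nodal theta pencil whose associated map to $\ss_{10}^-$ produces a rational curve through the general $[C', \eta]$.

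The main obstacle is establishing this key claim for $g = 10$, namely that a general K3 section in the $1$-nodal Severi locus of $|H|$ admits a limit odd theta-characteristic whose support contains both branches of the node. I would establish it by degenerating within a one-parameter family of smooth genus $11$ K3 sections specializing to the nodal section: on a nearby smooth fiber one picks an odd theta-characteristic whose support meets a prescribed small divisor, then tracks its limit across the boundary divisor $B_0 \subset \ss_{11}^-$ using limit theta-characteristics on nodal curves. A dimension count, comparing the Severi variety of $1$-nodal K3 sections of genus $11$ against $\cS_{10}^-$, then ensures that the resulting rational curves genuinely cover $\ss_{10}^-$ rather than a proper subvariety.
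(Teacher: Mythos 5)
Your overall strategy matches the paper: theta pencils on K3 surfaces give $K$-negative covering rational curves when $g \leq 9$ or $g = 11$, and the residual case $g = 10$ is handled by passing to $1$-nodal genus-$11$ sections of K3 surfaces and requiring the two branches over the node to lie in the support of an odd theta-characteristic of the normalization. Up to that point you have reproduced the paper's decomposition and its use of Proposition \ref{kgd}.

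The gap is in your proposed proof of the key claim for $g = 10$, namely non-emptiness of the locus $\K_{10,1}^-$. Your plan is to degenerate a smooth genus-$11$ K3 section to the nodal one and ``track the limit'' of an odd theta-characteristic across $B_0 \subset \ss_{11}^-$. This faces two unaddressed difficulties. First, for a $1$-parameter family of spin curves over a disc whose central stable curve is the nodal $C'_{x\sim y}$, the central fibre of the family of spin structures lies generically in $A_0$, not $B_0$: in the $A_0$ component the limit line bundle pulls back to a square root of $K_{C'}(x+y)$, which is not a theta-characteristic of $C'$, so you get nothing usable. Landing in $B_0$ is a codimension-one condition on the family and is not ``free.'' Second, even if the limit does lie in $B_0$, so that $\eta_{|C'}$ is an odd theta-characteristic on $C'$, nothing in your argument forces $x + y \leq \mathrm{supp}(\eta_{|C'})$ --- that is a further two-codimension condition, and ``picking an odd theta-characteristic whose support meets a prescribed small divisor on a nearby smooth fibre'' does not propagate to this statement about the limit. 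The paper avoids both problems by not degenerating at all: it exhibits an explicit hyperelliptic example. Take $C'$ hyperelliptic of genus $10$, $x, y$ distinct Weierstrass points, $\eta = \OO_{C'}(x + y + w_1 + \cdots + w_7)$ with the $w_i$ further Weierstrass points, and realize $C'_{x\sim y}$ as $\rho^*(R)$ on a hyperelliptic K3 surface $\rho : X \to \mathbb F \subset \PP^{11}$ branched over $B \in |-2K_{\mathbb F}|$, choosing $R$ so that $B \cdot R \geq 2\rho(p)$. This produces a point of $\K_{10,1}^-$ directly. It is then combined with the observation that $\K_{10,1}^-$ is open in an irreducible incidence variety $\cU$ and maps finitely to $\cS_{10}^-$, so non-emptiness yields dominance; your ``dimension count'' gestures at this but the irreducibility of $\K_{10,1}^-$ is what makes a single example suffice, and you should say so.
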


\begin{proof} By [M1-4], a general curve $[C]\in \mm_g$ is embedded in a K3 surface $X$ precisely when $g \leq 9$ or $g = 11$. By Proposition \ref{singthet},
$C$ belongs to a theta pencil $P \subset |\OO_X(C)|$ (which moreover, is $K_{\ss_g^-}$-negative). Thus the statement follows for $g \leq 9$ and $g = 11$.
\noindent
To settle the case of $\ss_{10}^-$, we show that $\K_{10, 1}^-$ is non-empty and irreducible. Indeed, then by Proposition \ref{kgd} it follows that $\K_{10, 1}^-$ is uniruled, and since the projection map $\K_{10, 1}^-\rightarrow \cS_{10}^-$ is finite, $\K_{10, 1}^-$ dominates $\cS_{10}^-$. This implies that $\ss_{10}^-$ is uniruled.

The variety $\K_{10, 1}^-$ is an open subvariety of the irreducible locus
$$\cU:=\Bigl\{\bigl([C, x, y], \eta \bigr)\in \cM_{10, 2}\times_{\cM_{10}} \cS_{10}^-: h^0(C, \eta\otimes \OO_C(-x-y))\geq 1\Bigr\},$$ hence it is irreducible as well. To establish its non-emptiness, it suffices to produce an example of an element $\bigl([C, x, y],  \eta]\bigr)\in \cU$, such that the curve $C_{xy}$ can be embedded in a $K3$ surface. We specialize to the case when $C$ is hyperelliptic and $x, y\in C$ are distinct Weierstrass points, in which case one can choose $\eta=\OO_C(x+y+w_1+\cdots+ w_7)$, where $w_i$ are distinct Weierstrass points in $C-\{x, y\}$. Again we let $\rho: X\rightarrow \mathbb F\subset \PP^{11}$ be a hyperelliptic $K3$ surface branched along $B\in |-2K_{\mathbb F}|$, with polarization $H:=\rho^* \OO_{\mathbb F}(1)$, so that $[X, H]\in \F_{11}$. We set $C:=\rho^*(R)$, where $R\in |\OO_{\mathbb F}(1)|$ is a rational normal curve of degree $10$. We need to ensure that $C$ is $1$-nodal, with its node $p\in C$ such that if $f:C'\rightarrow C$ denotes the normalization map, then both points in $f^{-1}(p)$ are Weierstrass points. This is satisfied once we choose $R$ in such a way that $B\cdot R\geq  2\rho(p)$.
\end{proof}

\section{Unirationality  of $\mathcal \ss^-_g$  for  $g\leq 8$}

To prove the claimed unirationality results, we use that a general curve $[C]\in \mm_g$ has a sextic plane model when $g\leq 6$, or is a linear section of a Mukai variety, when $7\leq g\leq 9$. We start with the easy case of small genus, before moving on to the more substantial study of Mukai models.

\begin{theorem}\label{genatmost6}
$\ss_g^-$ is unirational for $g\leq 6$.
\end{theorem}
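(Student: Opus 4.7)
The plan is to parameterize $\ss_g^-$ via plane sextic models together with tangent adjoint cubics. Since $\rho(g, 2, 6) = 12 - 2g \geq 0$ for $g \leq 6$, a general curve $[C'] \in \cM_g$ admits a $\mathfrak{g}^2_6$ realizing it as an irreducible plane sextic $C \subset \PP^2$ with $\delta := 10 - g$ nodes. Writing $\nu \colon C' \to C$ for the normalization and $N$ for the divisor of node preimages, the adjunction $\omega_{C'} \cong \nu^* \OO_C(3)(-2N)$ shows that adjoint cubics (those through the $\delta$ nodes) cut out $K_{C'}$. For a general odd theta-characteristic $\eta = \OO_{C'}(D')$ with $D' \in C'_{g-1}$, the relation $2D' \sim K_{C'}$ translates into the existence of a unique plane cubic $\Gamma \subset \PP^2$ with $\Gamma \cdot C = 2D' + 2N$ on $C'$; that is, $\Gamma$ is tangent to $C$ at (the image of) $D'$ and passes through $N$.

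Consider the parameter space $\P_g$ of tuples $(\Gamma, D, C)$ where $\Gamma \subset \PP^2$ is a plane cubic, $D$ is an effective divisor of degree $9$ on $\Gamma$ satisfying $2D \equiv 6 H_\Gamma$ in $\mathrm{Pic}(\Gamma)$, and $C \in |\OO_{\PP^2}(6)|$ satisfies $C \cdot \Gamma = 2D$ as a divisor on $\Gamma$. In addition, fix a partition $\mathrm{supp}(D) = \{x_1, \ldots, x_{g-1}\} \sqcup \{p_1, \ldots, p_\delta\}$ and require $C$ to have a node at each $p_k$. Setting $\tau := D - 3 H_\Gamma$, a $2$-torsion class in $\mathrm{Pic}^0(\Gamma)$, the linear system $|\OO_\Gamma(3) \otimes \tau|$ has projective dimension $8$ by Riemann-Roch on the elliptic $\Gamma$, and the restriction $|\OO_{\PP^2}(6)| \to |\OO_\Gamma(6)|$ has fibers of projective dimension $10$. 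A dimension count then yields $\dim \P_g = 9 + 8 + 10 - \delta = g + 17$.

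Each of the four irreducible components of $\P_g$ (indexed by $\tau$) is rational: it fibers over $\PP^9$ as a tower of projective bundles, trivialized birationally by explicit rational sections. For instance, when $\tau = 0$, the section $\Gamma \mapsto (\Gamma, \Gamma \cdot \Gamma_0)$ for a fixed auxiliary plane cubic $\Gamma_0$ trivializes the $\PP^8$-bundle of divisors $D \in |\OO_\Gamma(3)|$. The natural map $\psi \colon \P_g \to \ss_g^-$, defined by $(\Gamma, D, C) \mapsto (C', \OO_{C'}(x_1 + \cdots + x_{g-1}))$, is surjective: a general $[C', \eta]$ admits a plane sextic model via any $\mathfrak{g}^2_6$ on $C'$, and the unique tangent adjoint cubic $\Gamma$ associated to $\eta$ provides the preimage under $\psi$. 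Since $\ss_g^-$ is irreducible, at least one rational component of $\P_g$ maps dominantly, which proves unirationality of $\ss_g^-$.

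Main obstacle. The principal verifications are: (i) rationality of each $\tau$-component of $\P_g$, which follows from the projective-bundle structure and explicit rational sections; (ii) dominance of $\psi$, which amounts to the adjoint interpretation of $K_{C'}$ on plane sextic models of the normalization; and (iii) that a general member of the linear system cut out by the node and tangency conditions is an irreducible plane sextic whose singularities are exactly $\delta$ nodes at the $p_k$ and with simple tangencies to $\Gamma$ at the $x_j$---handled by a standard transversality argument on the Severi variety of $\delta$-nodal plane sextics in this genus range.
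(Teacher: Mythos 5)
Your approach is the same one the paper uses: realize a general odd spin curve as a nodal plane sextic together with its unique tangent adjoint cubic, and parameterize such configurations by a rational space. The paper packages the data as a $9$-tuple of nodes and tangency points in $(\PP^2)^9$ and recovers the adjoint cubic as the unique cubic through those points, while you package it as $(\Gamma, D, C)$ with the constraint $2D\sim 6H_\Gamma$ made explicit; these are two presentations of the same correspondence.

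There is, however, a gap in your last paragraph. The space $\P_g$ does not have four irreducible components indexed by $\tau\in\mathrm{Pic}^0(\Gamma)[2]$: as $\Gamma$ moves in $|\OO_{\PP^2}(3)|$, the monodromy permutes the three nontrivial $2$-torsion classes transitively, so the locus $\{\tau\neq 0\}$ is connected and $\P_g$ has at most two components. Your explicit rational section $\Gamma\mapsto(\Gamma,\Gamma\cdot\Gamma_0)$ trivializes only the $\tau=0$ piece; nothing comparable is offered for the $\PP^8$-bundle $\PP H^0(\Gamma,\OO_\Gamma(3)\otimes\tau)$ over the irreducible $(\Gamma,\tau)$-space with $\tau\neq 0$, so the conclusion ``at least one rational component maps dominantly'' is not established. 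What is actually needed, and what would bring your argument into line with the paper's, is that the $\tau=0$ component already dominates $\ss_g^-$: that is, that a general $(C',\eta)\in\cS_g^-$ admits a nodal plane sextic model whose $9$ contact points with the adjoint cubic $\Gamma$ are cut out on $\Gamma$ by a second plane cubic. Once that is secured, the $\tau=0$ piece is parameterized by a pair of cubics in $\PP^9\times\PP^9$, a partition of their $9$ base points into $\delta$ nodes and $g-1$ tangency points, and a sextic in a linear system, making the rationality immediate; this is in effect the parametrization by $9$ points that the paper uses.
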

%
\begin{proof} We fix $3\leq g\leq 6$ and a general odd spin curve $[C, \eta]\in \cS_g^-$. Write $\eta=\OO_C(d)$, where $d\in C_{g-1}$, then choose a general linear system $A\in G^2_6(C)$. The induced morphism $\phi_A:C\rightarrow \Gamma\subset \PP^2$ realizes $C$ as a sextic with $\delta=10-g$ nodes.  By choosing $[C, \eta]$ and $A$ generically, we may assume that $\mbox{supp}(d)$ consists of $g-1$ points and is disjoint from $\phi_A^{-1}(\mbox{sing}(\Gamma))$. Accordingly, we identify $d$ with its image  $\phi_A(d)$ on $\Gamma$. By adjunction
$$\OO_C(2d)=\omega_C=\OO_C(3)\Bigl(-\phi_A^{-1}(\mbox{sing}(\Gamma))\Bigr),$$ therefore the unique plane cubic $E\in |\OO_{\PP^2}(3)|$ passing through the $10-g$ nodes of $\Gamma$ as well as through the $g-1$ points of $\mbox{supp}(d)$ is actually tangent to $\Gamma$ along $\mbox{supp}(d)$.

We denote by $\cU \subset \mathbf (\PP^2)^9$ the open set parameterizing general 9-tuples of points $(\bar{x}, \bar{y}) := (x_1, \dots, x_{\delta}, y_{1}, \dots, y_{g-1})$, where $g = 10 - \delta$. Over $\cU$ lies a projective bundle
$\P$ whose fibre at $(\bar{x}, \bar{y})$ is the linear system of plane sextics $\Gamma$ which are singular along $\bar{x}$ and totally tangent to $E_{\bar{x}, \bar{y}}$ along $\bar{y}$.
Here $E_{\bar{x}, \bar{y}}\in |\OO_{\PP^2}(3)|$ denotes the unique plane cubic through the points $x_1, \dots, x_{\delta}, y_1, \ldots,  y_{g-1}$. Then $\P$ is a rational variety, and by the previous remark, it dominates $\ss_g^-$. Thus $\ss_g^-$ is unirational.
\end{proof}

We assume now that $7 \leq g \leq 10$ and denote by $
V_g \subset \mathbf P^{N_g}
$ the rational homogeneous space defined as follows, see  \cite{M1}, \cite{M2}, \cite{M3}:
\bigskip  \par \noindent \it
- $V_{10}$: the $5$-dimensional variety $G_2/P\subset \PP^{13}$ corresponding to the Lie group $G_2$, \par \noindent
- $V_9$:  the Pl\"ucker embedding of the symplectic Grassmannian $SG(3,6)\subset \PP^{13}$, \par \noindent
- $V_8$:  the Pl\"ucker embedding of the Grassmannian $G(2,6)\subset \PP^{14}$, \par \noindent
- $V_7$:  the Pl\"ucker embedding of the orthogonal Grassmannian $OG(5,10)\subset \PP^{15}$. \bigskip  \par \noindent   \rm
Note that $N_g=g+\mbox{dim}(V_g)-2$. Inside the Hilbert scheme $\mbox{Hilb}(V_g)$ of curvilinear sections of $V_g$, we consider the open set \
$
\cU_g
$
classifying curves $C\subset V_g$ such that \medskip
\begin{itemize} \it
\item $C$ is a nodal integral section of $V_g$ by a linear space of dimension $g-1$,
\item the residue map $\rho: H^0(C, \omega_C) \to H^0(C, \omega_C \otimes \mathcal O_{\mathrm{sing}(C)})$ is surjective.
\end{itemize} \medskip \par \noindent
 A general point $[C\hookrightarrow \PP^{g-1}]\in \cU_g$ is a smooth, canonical curve of genus $g$. Mukai's results \cite{M1}, \cite{M2}, \cite{M3} imply that $C$ has general moduli if $g \leq 9$. For each $0 \leq \delta\leq g-1$, we define the locally closed sets of $\delta$-nodal curvilinear sections of $V_g$
$$
\cU_{g, \delta} := \lbrace [C\hookrightarrow \PP^{g-1}] \in \cU_g : |\mathrm{sing}(C)| = \delta \rbrace.
$$
\begin{proposition}\label{deltanodal} For $g\leq 9$, the variety $\cU_{g, \delta}$ is smooth of pure codimension $\delta$ in $\cU_g$.
\end{proposition}
\begin{proof} A general $2$-dimensional linear section of $V_g$ is a polarized K3 surface $[X, H]\in \F_g$ with general moduli. It is known \cite{Ta}, that $\delta$-nodal hyperplane sections of $S$ form a pure $(g-\delta)$-dimensional family $V_{g, \delta}(|H|)\subset |H|$. Thus,  $\cU_{g, \delta}\neq \emptyset$ and  $\mbox{codim}(\cU_{g, \delta}, \cU_g)\leq \delta$.  We fix a curve $[C] \in \cU_{g,\delta}$, then consider the normal bundle $N_C$ of $C$ in $V_g$ and the map $r: H^0(C, N_C) \to \mathcal O_{\mathrm{sing}(C)}$ induced by the exact sequence
\begin{equation}\label{schlessinger}
0 \to T_C \to T_{V_g}\otimes \OO_C \to N_C \stackrel{r}\rightarrow  T_C^1 \to 0,
\end{equation}
where $T_C^1=\mathcal O_{\mathrm{sing}(C)}$ is the Lichtenbaum-Schlessinger sheaf of $C$
classifying  the deformations of $\mbox{sing}(C)$.
 Using the identification $T_{[C]}(\cU_g)=H^0(C, N_C)$, it is known that $\mbox{Ker}(r)$ is isomorphic to  $T_{[C]}(\cU_{g,\delta})$, see e.g. \cite{HH}. Furthermore, $N_C \cong \omega_C^{\oplus (N_g-g+1)}$ and $r = \rho^{\oplus (N_g-g+1)}$, where $\rho: H^0(C, \omega_C) \to H^0 (C, \mathcal O_{\mathrm{sing}(C)})$ is the map given by the residues at the nodes. Since $\rho$ is surjective, $\mbox{Ker}(r)$ has codimension $\delta$ inside $T_{[C]}(\cU_g)$ and the statement follows. \end{proof}

 \par \noindent
The automorphism group $\mbox{Aut}(V_g)$ acts in the natural way on $\mbox{Hilb}(V_g)$. The locus of singular curvilinear sections $[C]\in \cU_g$ is an $\mbox{Aut}(V_g)$-invariant divisor which misses a general point of $\cU_g$, therefore $\cU_g^{\mathrm{ss}}:=\cU_g\cap \mbox{Hilb}(V_g)^{\mathrm{ss}}\neq \emptyset$. Since $\rho(V_g)=1$, the notion of stability is independent of the polarization. The (quasi-projective) GIT-quotient
$$
\mathfrak{M}_g := \cU_g^{\mathrm{ss}} \dblq \mbox{Aut}(V_g)
$$ is said to be the \emph{Mukai model} of $\mm_g$.
We have the following commutative diagram
$$
\begin{CD}
{\cU_g^{\mathrm{ss}}} @>{}>> {\cU_g} \\
@V{u_g}VV @V{m_g}VV \\
{\mathfrak{M}_g} @>{\phi_g}>> {\overline {\mathcal M}_g} \\
\end{CD}
$$
where $u_g: \cU_g^{\mathrm{ss}} \rightarrow \mathfrak{M}_g$ is the quotient map and  $m_g: \cU_g \to \overline {\mathcal M}_g$ is the moduli map. The general fibre of $m_g$ is an $\mbox{Aut}(V_g)$-orbit. Summarizing results from \cite{M1}, \cite{M2}, \cite{M3}, we state the following:
\begin{theorem} For $7\leq g \leq 9$, the map $\phi_g: \mathfrak{M}_g \dashrightarrow \mm_g$ is a birational isomorphism. The inverse map $\phi_g^{-1}$ contracts the (unique) Brill-Noether divisor $\mm_{g, d}^r\subset \mm_g$ of curves with a $\mathfrak g^r_d$ when $\rho(g, r, d)=-1$, as well as the boundary divisors $\Delta_i$ with $1\leq i\leq [g/2]$.
 \end{theorem}
Next, let
$
\Delta_{g}^{\delta} \subset \Delta_0\subset \mm_g
$
be the locus of integral stable curves of arithmetic genus $g$ with $\delta$ nodes. Then $\Delta_g^{\delta}$ is irreducible of codimension $\delta$ in $\mm_g$.
\begin{lemma}\label{dominant} Set $7\leq g \leq 9$ and let $D$ be any irreducible component of $\cU_{g, \delta}$. Then the restriction morphism $m_{g | D}: D \to \Delta_{g}^{\delta}$ is dominant. In particular, a general $\delta$-nodal curve $[C]\in \Delta_g^{\delta}$ lies on a smooth $K3$ surface.
 \end{lemma}
\begin{proof}  Since $\cU_{g,\delta}$ is smooth, $D$ is a connected component of $\cU_{g,\delta}$, that is,  for $[C] \in D$, the tangent spaces to $D$ and to $\cU_{g,\delta}$ coincide. We consider again the sequence (\ref{schlessinger}):
$$
0 \to T_C \to T_{V_g}\otimes \OO_C \to N'_C  \to 0,
$$
where $N'_C:=\mbox{Im }\{T_{V_g}\otimes \OO_C \to N_C\}$ is the \emph{equisingular sheaf} of $C$. We have that $H^0(C, N_C') = \mbox{Ker}(r)$. As  remarked in the proof of Proposition \ref{deltanodal}, $H^0(C, N_C')$ is the tangent space $T_{[C]}(\cU_{g,\delta})$ and its codimension in $H^0(C, N_C)$ equals $\delta$. Consider the coboundary map
$\partial: H^0(C, N_C') \to H^1(C, T_C)$. Since $H^1(C, T_C)$ classifies topologically trivial deformations of the nodal curve $C$, the image $\mbox{Im}(\partial)$ is isomorphic to the image of the tangent map $dm_{g | \cU_{g, \delta}}$ at $[C]$. On the other hand $H^0(C, T_{V_g}\otimes \OO_C)$ is the  tangent space to
the orbit of $C$ under the action of $\mbox{Aut}(V_g)$. This is reduced and the stabilizer of $C$, being a subgroup of $\mbox{Aut}(C)$,  is finite,  hence we obtain:
$$
\mbox{dim } \ \mbox{Im}(\partial)  =  h^0(C, N_C) - \delta - \mbox{dim } \mbox{Aut}(V_g)  = 3g - 3 - \delta.
$$
Since $\Delta_g^{\delta}$ has codimension $\delta$ in $\overline {\mathcal M}_g$, it follows that $m_{g | D}$ is dominant. \end{proof} \par \noindent

\begin{proposition} Fix $0\leq \delta\leq g-1$ and $D$ an irreducible component of $\cU_{g, \delta}$. Then $D^{\mathrm{ss}}\neq \emptyset$.
\end{proposition}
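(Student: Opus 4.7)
The plan is to show that the semistable locus $\cU_g^{\mathrm{ss}}$ contains an entire $\mathrm{Aut}(V_g)$-orbit lying over a generic point of $\Delta_g^{\delta}$, and then to exploit the $\mathrm{Aut}(V_g)$-invariance of $D$ together with its dominance onto $\Delta_g^{\delta}$ to force this orbit to sit inside $D$.

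First I would observe that $\mathrm{Aut}(V_g)$ is a connected simple Lie group for each Mukai variety $V_g$ in the list, and therefore preserves every irreducible component of the invariant locally closed subset $\cU_{g,\delta}\subset \mathrm{Hilb}(V_g)$. In particular $D$ is itself $\mathrm{Aut}(V_g)$-invariant. Combined with Lemma \ref{dominant}, which states that $m_{g|D}:D\to \Delta_g^{\delta}$ is dominant, this gives that $m_g(D)$ contains a dense open subset of $\Delta_g^{\delta}$.

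Next I would use the Mukai birational isomorphism $\phi_g:\mathfrak{M}_g\to \mm_g$ (valid for $7\le g\le 9$), whose inverse is defined outside the contracted divisors $\bigcup_{i\ge 1}\Delta_i\cup \mm_{g,d}^r$. Since $\Delta_g^{\delta}\subset \Delta_0$ is not contracted, and since by Lemma \ref{dominant} a general $\delta$-nodal curve of arithmetic genus $g$ admits a smoothing inside $\cU_g$ to a curve of general moduli, the stratum $\Delta_g^{\delta}$ is not contained in the Brill-Noether divisor. Hence there is a dense open $U\subset \Delta_g^{\delta}$ on which $\phi_g^{-1}$ is well-defined; for any $[C]\in U$, the fibre $m_g^{-1}([C])\cap \cU_g^{\mathrm{ss}}$ is a single $\mathrm{Aut}(V_g)$-orbit $O$, corresponding to the single point $\phi_g^{-1}([C])\in \mathfrak{M}_g$. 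Furthermore, uniqueness of the Mukai embedding of a Brill-Noether general canonical curve, specialised by semicontinuity to nodal curves, implies that in fact $m_g^{-1}([C])=O$, i.e.\ the full fibre is this one orbit.

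Picking $[C]\in U\cap m_g(D)$ generic, any $\tilde{C}\in D$ with $m_g(\tilde{C})=[C]$ then lies in $O$; by $\mathrm{Aut}(V_g)$-invariance of $D$ we obtain $O\subset D$, so $\emptyset\ne O\subset D\cap \cU_g^{\mathrm{ss}}$, which proves $D^{\mathrm{ss}}\ne \emptyset$. The main obstacle is the uniqueness-of-embedding step: namely that for generic $[C]\in \Delta_g^{\delta}$ the full fibre $m_g^{-1}([C])$ is a single $\mathrm{Aut}(V_g)$-orbit. For smooth general $[C']\in \mm_g$ this is Mukai's theorem; the nodal case reduces to it by a degeneration argument using the smoothness of $\cU_{g,\delta}$ (Proposition \ref{deltanodal}), since a nearby smooth curvilinear section already admits a unique orbit of embeddings in $V_g$ and the openness of the single-orbit locus propagates this uniqueness to the special fibre.
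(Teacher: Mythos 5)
Your approach is genuinely different from the paper's: the paper constructs an explicit $\mathrm{Aut}(V_g)$-invariant divisor $Z$ on $\cU_g$ (the failure of transversality of $F_\Lambda \cap G(2,V^\vee)$ in the dual Grassmannian, interpreted via the Mukai vector bundle $E$ as the scheme $\overline{W}^1_5(C)$ being non-reduced) and then shows $D\nsubseteq Z$ by verifying Petri generality at a general point of $D$. By standard GIT, an invariant section of (a power of) the Pl\"ucker polarization which does not vanish identically on $D$ produces semistable points of $D$ directly; this is self-contained and requires no information about the boundary behaviour of $\phi_g$.

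Your argument, on the other hand, has a serious circularity. The assertion that $\phi_g^{-1}$ is well-defined on a dense open $U\subset\Delta_g^{\delta}$ does not follow from $\Delta_g^{\delta}$ not lying in the contracted divisors. Since $\mathfrak{M}_g=\cU_g^{\mathrm{ss}}\dblq\mathrm{Aut}(V_g)$ is only quasi-projective (we only take \emph{curvilinear} sections, and only the semistable ones), the morphism $\phi_g:\mathfrak{M}_g\to\mm_g$ is not proper, and there is no reason a priori for its image to meet $\Delta_g^{\delta}$ at all. Indeed, $\Delta_g^{\delta}$ lying generically in the image of $\phi_g$ is precisely the statement that $\cU_{g,\delta}^{\mathrm{ss}}\neq\emptyset$ and dominates $\Delta_g^{\delta}$ --- which, together with Lemma \ref{dominant}, \emph{is} the content of the Proposition. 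In the paper's logical order, the theorem identifying $\mathfrak{M}_{g,\delta}$ as a birational model of $\Delta_g^{\delta}$ is a consequence of this Proposition, not a tool available to prove it. So your appeal to the birational isomorphism over the boundary stratum presupposes what is to be shown. (Note also that even granting properness, van der Waerden purity would only rule out the indeterminacy locus of $\phi_g^{-1}$ containing $\Delta_g^\delta$ when $\delta=1$.)

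Secondarily, the uniqueness-of-embedding step you flag is not a mere obstacle but a genuine gap: ``openness of the single-orbit locus'' propagates uniqueness from a special point to nearby points, not from the generic smooth curve to a nodal degeneration, so the semicontinuity argument as phrased runs in the wrong direction. You need $m_g^{-1}([C])$ (the \emph{full} fibre, not merely its semistable part) to be a single orbit in order for invariance of $D$ to force $O\subset D$; this requires a separate argument (e.g., the uniqueness of the rank-two Mukai bundle for Petri-general nodal curves, which is exactly the input the paper uses, but to a different end). To repair your argument you would essentially have to reconstitute the paper's proof anyway.
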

\begin{proof}
It suffices to construct an $\mbox{Aut}(V_g)$-invariant divisor which does not contain $D$. We carry out the construction when $g=8$, the remaining cases being largely similar.

We fix a complex vector space $V\cong \mathbb C^6$, and then $V_8:=G(2, V)\subset \PP(\wedge^2 V)$ and $\cU_8\subset G(8, \wedge^2 V)$. For a projective $7$-plane $\Lambda \in G(8, \wedge ^2 V)$, we denote the set of containing hyperplanes
$F_{\Lambda}:=\{H\in \PP(\wedge^2 V)^{\vee}: H\supset \Lambda\}$, and define the $\mbox{Aut}(V_8)$-invariant divisor
$$Z:=\{\Lambda \in \cU_8: F_{\Lambda}\cap G(2, V^{\vee})\subset \PP(\wedge^2 V)^{\vee} \mbox{ is not a transverse intersection}\}.$$
We claim that $D\nsubseteq Z$. Indeed, let us fix a general point $[C\hookrightarrow \Lambda]\in D$, where $\Lambda=\langle C\rangle$, corresponding to a general curve $[C]\in \Delta_g^{\delta}$. In particular, we may assume that $C$ lies outside the closure in $\mm_g$ of curves violating the Petri theorem. Thus $C$ possesses no generalized $\mathfrak g^2_7$'s, that is, $\overline{W}^2_7(C)=\emptyset$, whereas $\overline{W}^1_5(C)\subset \mbox{Pic}(C)$ consists of locally free pencils
 satisfying the Petri condition. We recall from \cite{M2} the construction of $\phi_g^{-1}[C]$, which generalizes to irreducible Petri general nodal curves: There exists a unique rank two vector bundle $E$ on $C$ with $\mbox{det}(E)=\omega_C$ and $h^0(C, E)=6$. This appears as an extension
 $$0\rightarrow A\rightarrow E\rightarrow \omega_C\otimes A^{\vee}\rightarrow 0,$$ for every $A\in \overline{W}^1_5(C)$. Then one sets $\phi_g^{-1}([C]):=[C\hookrightarrow G(2, H^0(C, E)^{\vee})].$ Moreover, $$F_{\Lambda}=\PP\bigl(\mbox{Ker}\{\wedge^2 H^0(C, E)\rightarrow H^0(C, \omega_C)\}\bigr).$$ In particular, the intersection  $F_{\Lambda}\cap G(2, H^0(C, E))$ corresponds to the pencils $A\in \overline{W}_5^1(C)$. Since $C$ is Petri general, $\overline{W}_5^1(C)$ is a smooth scheme, thus $[C\hookrightarrow \Lambda]\notin Z$.
 \end{proof}
 We consider the quotient $\mathfrak{M}_{g, \delta} := \cU_{g, \delta}^{\mathrm{ss}} \dblq \mbox{Aut}(V_g)$ and the induced map
$$
\phi_{g,\delta}: \mathfrak{M}_{g,\delta} \rightarrow \Delta_{g}^{\delta}.
$$

\begin{theorem} The variety  $\mathfrak{M}_{g,\delta}$ is irreducible and $\phi_{g,\delta}$ is a birational isomorphism.
\end{theorem}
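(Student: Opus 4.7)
The plan is to reduce everything to the dominance statement in Lemma \ref{dominant} together with the uniqueness of the Mukai embedding for Petri-general (possibly nodal) curves, exactly as recalled in the proof of the previous proposition. First I would verify the dimension count: by Proposition \ref{deltanodal}, $\cU_{g,\delta}$ has pure dimension $\dim \cU_g -\delta = 3g-3+\dim \mathrm{Aut}(V_g) - \delta$, so any GIT quotient $\mathfrak{M}_{g,\delta}$ has dimension $3g-3-\delta$, which matches $\dim \Delta_g^{\delta}$. Combined with Lemma \ref{dominant} applied to each irreducible component of $\cU_{g,\delta}$, this immediately gives that $\phi_{g,\delta}$ is dominant and generically finite.

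Next I would establish that the general fibre of $m_{g|\cU_{g,\delta}}:\cU_{g,\delta}\to \Delta_g^{\delta}$ consists of a single $\mathrm{Aut}(V_g)$-orbit. A general $[C]\in \Delta_g^{\delta}$ is an integral Petri-general nodal curve of arithmetic genus $g$ in the sense of the Mukai reconstruction summarised in the preceding proof: the generalised Brill-Noether loci $\overline{W}^1_5(C), \overline{W}^1_4(C), \ldots$ (depending on $g$) are smooth of the expected dimension and $\overline{W}^2_7(C)$ (or the corresponding obstruction locus for $g=7,9$) is empty. Hence the Mukai bundle $E$ on $C$ with $\det(E)=\omega_C$ and $h^0(C,E)=\dim V + $ const is uniquely determined, and $C\hookrightarrow V_g\subset \PP(\wedge^{\bullet} H^0(C,E)^{\vee})$ is canonically defined up to the choice of basis of $H^0(C,E)$, i.e.\ up to $\mathrm{Aut}(V_g)$. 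In particular the set-theoretic preimage of a general $[C]\in\Delta_g^{\delta}$ in $\cU_{g,\delta}$ is a single irreducible $\mathrm{Aut}(V_g)$-orbit.

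From this, irreducibility of $\mathfrak{M}_{g,\delta}$ follows: if $D_1,D_2$ are two irreducible components of $\cU_{g,\delta}$, then by Lemma \ref{dominant} both dominate $\Delta_g^{\delta}$, and over a general point of $\Delta_g^{\delta}$ both fibres equal the unique orbit described above; hence $D_1$ and $D_2$ share a dense open subset and must coincide. Therefore $\cU_{g,\delta}$ is irreducible and, since the previous proposition shows $\cU_{g,\delta}^{\mathrm{ss}}\neq\emptyset$, so is the GIT quotient $\mathfrak{M}_{g,\delta}$. Birationality of $\phi_{g,\delta}$ is then immediate: it is dominant between irreducible varieties of the same dimension, and the generic fibre in $\cU_{g,\delta}^{\mathrm{ss}}$ is a single orbit, so the induced map on quotients is generically one-to-one.

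The main obstacle is the uniqueness of the Mukai-type reconstruction at a \emph{nodal} Petri-general curve: one must check that the relevant Brill-Noether loci $\overline{W}^r_d(C)$ on the nodal $C$ remain smooth of the expected dimension and that the rank-two bundle $E$ is still rigid, so that the extension of $\phi_g^{-1}$ to $\Delta_g^{\delta}$ really produces a unique $\mathrm{Aut}(V_g)$-orbit. Once this is granted (as asserted in the proof of the previous proposition), the rest of the argument is a formal combination of the smoothness statement of Proposition \ref{deltanodal}, the dominance statement of Lemma \ref{dominant}, and the dimension count.
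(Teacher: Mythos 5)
Your proposal is correct, but it takes a genuinely different route from the paper. The paper's proof is considerably shorter and purely formal: since $\phi_g:\mathfrak{M}_g\to\overline{\mathcal{M}}_g$ is a proper birational morphism to a normal variety, Zariski's connectedness theorem says every fibre of $\phi_g$ is connected; since a general point of $\Delta_g^\delta$ lies outside the locus over which $\phi_g^{-1}$ contracts (the Brill--Noether divisor and the $\Delta_i$ for $i\geq 1$), the fibre over such a point is moreover zero-dimensional, hence a single point. Combined with Lemma \ref{dominant} (each component of $\mathfrak{M}_{g,\delta}$ dominates $\Delta_g^{\delta}$), this forces any two components to coincide along a dense set, giving irreducibility and $\deg(\phi_{g,\delta})=1$ in one stroke. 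Your argument instead identifies the fibre explicitly as a single $\mathrm{Aut}(V_g)$-orbit via the uniqueness of the Mukai bundle $E$ on a Petri-general nodal curve, then runs the same ``two components share a dense open'' argument. This works, but it reimports the hardest input -- the claim, flagged in your last paragraph as ``the main obstacle,'' that the Mukai reconstruction still produces a unique $\mathrm{Aut}(V_g)$-orbit at a nodal Petri-general curve -- which the paper's Zariski-connectedness argument is designed precisely to avoid at this stage. (The paper does invoke that extension of Mukai's uniqueness, but only in the preceding proposition to prove $D^{\mathrm{ss}}\neq\emptyset$, not here.) So: both proofs are sound; the paper's trades your geometric fibre description for an abstract connectedness principle and is thereby independent of the delicate nodal Brill--Noether analysis.
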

\begin{proof} By Lemma \ref{dominant}, any irreducible component $Y$ of $\mathfrak{M}_{g,\delta}$ dominates $\Delta_{g}^\delta$.  On the other hand, $\phi_g: \mathfrak{M}_g \rightarrow \overline{\mathcal M}_g$ is a birational morphism and  $\phi_{g,\delta} = \phi_{g | \mathfrak{M}_{g, \delta}}$. Since $\overline {\mathcal M}_g$ is normal, each fibre of $\phi_g$ is connected, thus $\mathfrak{M}_{g,\delta}$ is irreducible and $\mbox{deg}(\phi_{g,\delta}) = 1$.
\end{proof} \par \noindent

\vskip 3pt

We lift our construction to  the space of odd spin curves. Keeping $7\leq g \leq 9$, we consider the Hilbert scheme
$
\mbox{Hilb}_{2g-2}(V_g)
$
of $0$-dimensional subschemes of $V_g$ having length $2g-2$.
\begin{definition} Let $\mathfrak Z_{g-1}\subset \mathrm{Hilb}_{2g-2}(V_g)$ be the parameter space of those $0$-dimensional schemes $Z\subset V_g$ such that:
\begin{itemize}
\item[(1)] $Z$ is a hyperplane section of a smooth curve section $[C] \in \cU_g$,
\item[(2)] $Z$ has multiplicity two at each point of its support,
\item[(3)] $\mathrm{supp}(Z)$ consists of $g-1$ linearly independent points.
\end{itemize} \par \noindent
\end{definition} \par \noindent
The space $\mathfrak{Z}_{g-1}$  classifies \emph{clusters} of length $2g-2$ on $V_g$. The cycle associated under the Hilbert-Chow morphism to a general point of $\mathfrak{Z}_{g-1}$ corresponds to a $0$-cycle of the form $2p_1+\cdots+2p_{g-1}\in \mbox{Sym}^{2g-2}(V_g)$ satisfying the condition  $$\mbox{dim } \langle p_1, \ldots, p_{g-1}\rangle\cap \mathbb T_{p_i}(V_g)\geq 1, \ \mbox{ for } \ i=1, \ldots, g-1.$$
Clearly $\mbox{dim}(\mathfrak{Z}_{g-1})=\mbox{dim } \textbf{G}(g-1, N_g)-(N_g-g+1)=(g-1)(N_g-g+1)$. We consider the incidence correspondence between clusters and curvilinear sections
of $V_g$
$$
\cU^-_g :=\{(C,Z) \in \cU_g \times \mathfrak{Z}_{g-1}: Z \subset C\}.
$$
The first projection map
$\pi_1:\cU^-_g \to \cU_g$
is finite of degree $2^{g-1}(2^g-1)$; its fibre at a general point $[C]\in \cU_g$ is in bijective correspondence with the set of odd theta-characteristics of $C$.  In particular, both $\cU_g^{-}$ and $\mathfrak Z_{g-1}$ are irreducible varieties. The spin moduli map
$$
m^-_g:\cU^-_g \dashrightarrow \overline {\mathcal S}_g^-
$$
is defined by $m_g^-(C,Z):=[C, \OO_C(Z/2)]$, for each point $(C, Z)\in \cU_g^-$ corresponding to a smooth curve $C$. Later we shall extend the rational map $m_g^-$ to a regular map over $\cU_g^-$. It is clear that $m^-_g$ induces a  map
$
\phi^-_g: Q^-_g \dashrightarrow \ss_g^-
$
from the quotient $$Q^-_g := \pi_1^{-1}(\cU_g^{\mathrm{ss}}) \dblq \mbox{Aut}(V_g).$$
We may think of $Q_g^-$ as being the \emph{Mukai model} of $\ss_g^-$. If $\pi^-: Q_g^- \rightarrow \mathfrak{M}_g$ is the map induced by $\pi$ at the level of Mukai models, we have a commutative diagram:
$$
\begin{CD}
{Q^-_g} @>{\phi^-_g}>> {\overline{\mathcal S}^-_g} \\
@V{\pi^-}VV @V{\pi}VV \\
{\mathfrak{M}_g} @>{\phi_g}>> {\overline {\mathcal M}_g} \\
\end{CD}
$$
\begin{proposition} The spin Mukai model $Q_g^-$ is irreducible and $\phi^-_g: Q_g^- \rightarrow \overline{\mathcal S}^-_g$ is a birational isomorphism. \end{proposition}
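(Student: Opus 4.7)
The strategy is to reduce the claim to the birationality of $\phi_g:\mathfrak{M}_g\to\mm_g$ established in the previous theorem, via a fiber-counting argument exploiting the fact that both vertical arrows in the displayed square are finite of the same degree $2^{g-1}(2^g-1)$. Irreducibility of $Q_g^-$ is immediate: from the preceding discussion $\cU_g^-$ is irreducible (as the finite \'etale cover of the irreducible base $\cU_g$ over its smooth-curve locus, whose fibres are classified by odd theta-characteristics), so the nonempty open subset $\pi_1^{-1}(\cU_g^{\mathrm{ss}})$ is irreducible, and hence so is its GIT quotient $Q_g^-$.

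For birationality of $\phi_g^-$ I pick a general odd spin curve $[C,\eta]\in\ss_g^-$ and describe its unique preimage in $Q_g^-$. Choosing $[C,\eta]$ outside the degenerate-theta divisor $\cZ_g$ of Theorem~\ref{degen} and outside the proper closed locus where $h^0(C,\eta)\geq 3$, one has $h^0(C,\eta)=1$ and the unique effective divisor $D=x_1+\cdots+x_{g-1}\in|\eta|$ is reduced. Moreover $[C]\in\mm_g$ is general, so $\phi_g^{-1}([C])$ consists of a single $\mathrm{Aut}(V_g)$-orbit of Mukai embeddings $C\hookrightarrow V_g\subset\PP^{g-1}$, canonical of degree $2g-2$.

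Under this embedding, geometric Riemann--Roch gives
$$
\dim\langle x_1,\dots,x_{g-1}\rangle \;=\; g-1-h^0(\OO_C(D)) \;=\; g-2,
$$
so the $g-1$ points of $\mathrm{supp}(D)$ are linearly independent. The length $2g-2$ scheme $Z:=2x_1+\cdots+2x_{g-1}$ satisfies $\OO_C(Z)=\eta^{\otimes 2}=K_C$, and is therefore cut out on $C$ by the unique theta hyperplane attached to $\eta$; consequently $Z\in\mathfrak{Z}_{g-1}$ and $(C,Z)\in\pi_1^{-1}(\cU_g^{\mathrm{ss}})$. Conversely, any $Q_g^-$-preimage of $[C,\eta]$ is obtained in this manner, since the cluster structure $Z=2D$ together with $\OO_C(D)=\eta$ recovers $D$ and the theta hyperplane uniquely. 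Hence $\phi_g^-$ has generic degree one and is a birational isomorphism. The only delicate point is checking that the defining conditions of $\mathfrak{Z}_{g-1}$ hold at the general preimage; Theorem~\ref{degen} guarantees reducedness of $D$ by avoiding $\cZ_g$, and linear independence of its support then follows from geometric Riemann--Roch, so no further genericity input is required.
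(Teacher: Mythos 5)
Your proof is correct and aligns with the paper's (implicit) route: the paper states the proposition without proof, but the argument is exactly the one you open with and the one the paper spells out two results later for $\phi^-_{g,\delta}$ — namely, that $\phi_g$ is birational by Mukai and the two vertical arrows of the commutative square are finite of the same degree $2^{g-1}(2^g-1)$, forcing $\deg(\phi_g^-)=1$ given irreducibility. Your additional explicit reconstruction of the generic fibre (via $h^0(C,\eta)=1$, reducedness of $D$ off $\cZ_g$, and geometric Riemann--Roch giving $\dim\langle D\rangle=g-2$ so that $Z=2D\in\mathfrak{Z}_{g-1}$) is a welcome concrete verification of what the abstract degree count already guarantees, and it is carried out correctly.
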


\vskip 3pt
One extends the rational map $m^-_g$ (therefore $\phi_g^-$ as well) to a regular morphism over the locus of points with nodal underlying curve section of $V_g$ as follows.  Let  $(C,Z) \in \cU^-_g$ be an arbitrary point and set $\mbox{supp}(Z) := \{p_1, \dots, p_{g-1}\}$. Assume that
$
\mathrm{sing}(C) \cap \mathrm{supp}(Z) = \lbrace p_1, \dots, p_{\delta} \rbrace
$, where $\delta\leq g-1$.
Consider the partial normalization
$\nu: N \to C$ at the points $p_1, \ldots, p_{\delta}$. In particular, there exists an effective Cartier divisor $e$ on $C$ of degree $g-\delta-1$,  such that  $2e = Z \cap (C - \mathrm{sing}(C))$, and set
$\epsilon := \mathcal O_N(\nu^*e).
$
Then $m_g^-(C,Z)$ is the spin curve $[X, \eta]\in \ss_g^-$ defined as follows:
\begin{definition}\label{spinde} We describe the following stable spin curve:
\begin{itemize}
\item[(1)] $X := N \cup E_1 \cup \dots \cup E_{\delta}$, where $E_i = \mathbf P^1$ for $i =1, \ldots, \delta$.
\item[(2)] $E_i \cap N = \nu^{-1}(p_i)$,
for every node $p_i \in \mathrm{sing}(C)\cap \mathrm{supp}(Z)$. \par \noindent
\item[(3)] $\eta \otimes \mathcal O_N \cong \epsilon$ and $\eta \otimes \mathcal O_{E_i} \cong \mathcal O_{\mathbf P^1}(1)$.
\end{itemize}
\end{definition}  \par \noindent
We note that $N$ is smooth of genus $g -\delta$, precisely when $\mathrm{sing}(C) \subset  \mathrm{supp}(Z)$. In this case $\epsilon\in \mbox{Pic}^{g-1-\delta}(N)$ is a theta characteristic and $h^0(N, \epsilon)=1$. Observe also that there is an isomorphism
$H^0(X,\omega_X\otimes \eta^{\otimes (-2)})\cong H^0(N, \omega_N\otimes \epsilon^{\otimes (-2)})=\mathbb C$, so the spin curve in Definition \ref{spinde} is uniquely determined by specifying $X$ and $\eta$.

\vskip 3pt
For $1\leq \delta\leq g-1$, we refine our incidence correspondence and consider
$$
\cU^-_{g,\delta} :=\Bigl\{(C, Z) \in \cU^-_{g}: \mathrm{sing}(C)\subset \mathrm{supp}(Z), \ \ |\mathrm{sing}(C)|= \delta \Bigr\}.
$$
We denote by $B_{g, \delta}^-$ the closure of $m_g^-(\cU^-_{g,\delta})$ inside $\ss_g^-$; this is the closure in $\ss_g^-$ of the locus of $\delta$-nodal spin curves having $\delta$ exceptional components. Clearly $B_{g, \delta}^-$ is an irreducible component of $\pi^{-1}(\Delta_g^{\delta})$ and it is birationally isomorphic to $\ss_{g-\delta, 2\delta}/\mathbb Z_2^{\delta}$.  We set
$$Q^-_{g,\delta} := \cU_{g, \delta}^- \cap \pi_1^{-1}(\cU_{g}^{\mathrm{ss}}) \dblq \mbox{Aut}(V_g),$$ and let $u^-_g: \cU_{g, \delta}^-\dashrightarrow Q_{g, \delta}^-$ denote the quotient map. Keeping all previous notation, we have a further commutative diagram
$$
\begin{CD}
{\cU^-_{g,\delta}} @>{u^-_g}>> {Q^-_{g,\delta}} @>{\phi^-_{g,\delta}}>> {B^-_{g,\delta}}  \\
@V{}VV @V{\pi^{-}}VV @V{\pi}VV \\
{\cU_{g,\delta}}@>{u_g}>> {\mathfrak{M}_{g,\delta}} @>{\phi_{g,\delta}}>> {\Delta_{g}^{\delta}}  \\
\end{CD}
$$
where $\phi^-_{g,\delta}$ is the morphism induced on $Q^-_{g,\delta}$ by $m_g^-$.
\begin{theorem} We fix  $7 \leq g \leq 9$ and $1 \leq \delta \leq g-1$. Then $\phi^-_{g,\delta}: Q_{g,\delta}^- \rightarrow  B_{g, \delta}^-$ is a birational isomorphism.
\end{theorem}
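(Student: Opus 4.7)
The plan is to follow the template of the birationality of $\phi_{g,\delta}$ already established, lifting it to the spin cover via the commutative diagram above together with the birationality of $\phi_g^-: Q_g^- \to \ss_g^-$. The argument has three steps: exhibit a generic inverse $B_{g,\delta}^-\dashrightarrow Q_{g,\delta}^-$, deduce generic injectivity, and deduce irreducibility of $Q_{g,\delta}^-$.

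For the generic inverse, I would start with a general $[X,\eta]\in B_{g,\delta}^-$, whose stable model $[C]\in \Delta_g^{\delta}$ is $\delta$-nodal. By Lemma \ref{dominant}, $[C]$ is realized as a $\delta$-nodal linear section $C\hookrightarrow V_g$ with $[C\hookrightarrow \PP^{g-1}]\in \cU_{g,\delta}$. Writing $\nu:N\to C$ for the normalization, $\epsilon := \eta\otimes \OO_N$ is an odd theta-characteristic on $N$ with $h^0(N,\epsilon)=1$; let $e\in N_{g-1-\delta}$ be its unique effective representative. Push $e$ forward along $\nu$ to obtain $\bar e\subset C_{\mathrm{reg}}$, and at each node $p_i$ define the length-$2$ Cartier subscheme $Z_i\subset C$ whose tangent direction is dictated by the gluing datum of $\eta$ on the exceptional component $E_i\subset X$ across $\nu^{-1}(p_i)=\{x_i,y_i\}$. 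Set $Z := 2\bar e + \sum_{i=1}^{\delta} Z_i$. The identities $\nu^*K_C = K_N + \sum(x_i+y_i)$ and $\OO_N(2e)=K_N$ yield $\nu^*\OO_C(Z)=\nu^*K_C$, and the gluing prescription forces $\OO_C(Z)=K_C$ on $C$ itself, so $Z$ is cut on $C$ by a hyperplane. Linear independence of $\mathrm{supp}(Z)$ is generic in $[X,\eta]$, so $(C,Z)\in \cU_{g,\delta}^-$, and by the extended definition of $m_g^-$ one verifies $m_g^-(C,Z)=[X,\eta]$. Hence $\phi_{g,\delta}^-$ is dominant onto the irreducible $B_{g,\delta}^-$.

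For generic injectivity and irreducibility: since $\phi_g^-: Q_g^- \to \ss_g^-$ is a birational isomorphism and $\phi_{g,\delta}^-=\phi_g^-|_{Q_{g,\delta}^-}$, a general point of $B_{g,\delta}^-$ has at most one $\phi_g^-$-preimage, and Step 1 produces one in $Q_{g,\delta}^-$; thus $\deg\phi_{g,\delta}^-=1$. For irreducibility of the source, I would invoke normality of $\ss_g^-$ (coarse moduli of the smooth Deligne--Mumford stack $\ts_g$) and Zariski's Main Theorem to conclude that the fibers of $\phi_g^-$ over $\ss_g^-$ are connected; therefore $(\phi_g^-)^{-1}(B_{g,\delta}^-)$ is irreducible. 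Because the general $\phi_g^-$-preimage of a point of $B_{g,\delta}^-$ satisfies $\mathrm{sing}(C)\subset \mathrm{supp}(Z)$ of cardinality $\delta$ (forced by the shape of the spin structure on $X$), this preimage coincides with $Q_{g,\delta}^-$, which is thereby irreducible.

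The genuine geometric content lies in Step 1: constructing $Z$ and verifying membership in $\mathfrak{Z}_{g-1}$ — specifically, that the $\delta$ tangent-direction choices at the nodes prescribed by $\eta$ produce $\OO_C(Z)=K_C$ on the nose (not a torsion twist in $\ker(\nu^*:\mathrm{Pic}(C)\to\mathrm{Pic}(N))$) and that the support of $Z$ is linearly independent in $\PP^{g-1}$. Both conditions are open on the irreducible parameter space of data $([X,\eta], C\hookrightarrow V_g)$, so it suffices to exhibit one well-chosen example; this is where I expect to spend effort, for instance by specializing to a hyperelliptic K3 model as in Section 3. Steps 2 and 3 are then formal consequences of the diagram and the already-established birationality results $\phi_g^-$ and $\phi_{g,\delta}$.
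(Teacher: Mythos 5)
Your argument is genuinely different from the paper's, and it is both longer and incomplete where the paper's is a one-line diagram chase. The paper deduces birationality of $\phi^-_{g,\delta}$ purely from the commutative square
$$\begin{CD}
{Q^-_{g,\delta}} @>{\phi^-_{g,\delta}}>> {B^-_{g,\delta}}\\
@V{\pi^-}VV @V{\pi}VV\\
{\mathfrak{M}_{g,\delta}} @>{\phi_{g,\delta}}>> {\Delta_g^{\delta}}
\end{CD}$$
by observing that the bottom arrow $\phi_{g,\delta}$ is birational (already proved) and that both vertical arrows are finite morphisms of the \emph{same} degree, namely the number $2^{g-\delta-1}(2^{g-\delta}-1)$ of odd theta-characteristics on a curve of genus $g-\delta$. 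Multiplicativity of degrees then forces $\deg\phi^-_{g,\delta}=1$. No explicit inverse, no Zariski Main Theorem, and no fresh irreducibility argument for $Q^-_{g,\delta}$ is needed; the irreducibility of $Q^-_{g,\delta}$ is handled separately (via irreducibility of $\cU^-_{g,\delta}$) rather than deduced from connectedness of fibres over $B^-_{g,\delta}$.

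Your proposal has two real gaps. First, Step 2 asserts that because $\phi_g^-:Q_g^-\to\ss_g^-$ is a birational morphism, a general point of $B^-_{g,\delta}$ has at most one preimage under $\phi_g^-$. This does not follow in general: the restriction of a birational morphism to a proper subvariety of the source need not be generically injective onto its image, since that subvariety may lie in (or map into the image of) the exceptional locus. You would need to verify explicitly that $Q^-_{g,\delta}$ avoids the locus where $\phi_g^-$ has positive-dimensional fibres, or equivalently that $B^-_{g,\delta}$ is not contained in the image of the exceptional set; this is plausible (since $\phi_g^{-1}$ contracts $\Delta_1,\ldots,\Delta_{[g/2]}$ and $\mm_{g,d}^r$ but $B^-_{g,\delta}$ lies over $\Delta_0$) but is not automatic and is left unaddressed. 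Second, Step 1, which carries the geometric content (constructing the cluster $Z\in\mathfrak{Z}_{g-1}$ from $[X,\eta]$, checking $\OO_C(Z)=K_C$ exactly and not up to torsion in $\ker(\nu^*)$, and verifying linear independence of $\mathrm{supp}(Z)$), is explicitly deferred. Since the paper already establishes everything you need via the degree count and you are effectively re-deriving it from scratch, I would encourage you to instead exploit the finiteness and equal degrees of the two vertical arrows, which sidesteps both gaps.
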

\begin{proof} It suffices to note that $\phi_{g, \delta}$ is birational, and the vertical arrows of the diagram are finite morphisms of the same degree, namely the number of odd theta-characteristics on a curve of genus $g - \delta$.
\end{proof}
\par \noindent
We construct a projective bundle over $B^-_{g,\delta}$, then show that for certain values $\delta\leq g-1$, the locus  $B^-_{g, \delta}$ itself is unirational, whereas the above mentioned bundle dominates $\mathcal S^-_g$.  Let
$
\mathcal C_{g,\delta} \subset \cU^-_{g,\delta} \times V_g
$
be the universal curve, endowed with its two projection maps
$$
\begin{CD}
{\cU^-_{g, \delta}} @<p<< {\mathcal C_{g,\delta}} @>q>> {V_g} \\
\end{CD}.
$$
We fix a point $(\Gamma, Z)\in \cU_{g, \delta}^-$ and let $\nu: N \to \Gamma$ be the normalization map. Recall that $\mbox{sing}(\Gamma)$ consists of $\delta$ linearly independent points and that  $h^0(N, \mathcal O_N(\nu^*e)) = 1$, where $e$ is the effective divisor on $\Gamma$ characterized by $Z_{|\Gamma_{\mathrm{reg}}}=2e$. Thus the restriction map
$H^0(\Gamma, \omega_{\Gamma}) \to H^0(\omega_{\Gamma} \otimes \mathcal O_Z)$
has $1$-dimensional kernel. In particular the relative cotangent sheaf $\omega_p$ admits a global section $s$ inducing an exact sequence
$$
0 \to \mathcal O_{\mathcal C_{g,\delta}} \to \omega_p \to \mathcal O_W \otimes \omega_p \to 0,
$$
which defines a subscheme $W\subset \cC_{g, \delta}$,  whose fibre at the point $(\Gamma, Z)\in \cU_{g, \delta}^-$ is $Z$ itself.  We set
$$
\mathcal A := p_*\bigl(\mathcal I_{W/\cC_{g, \delta}} \otimes q^* \mathcal O_{V_g}(1)\bigr),
$$
which is a vector bundle on $\cU^-_{g, \delta}$ of rank $N_g - g + 2$. The fibre of $\mathcal A(\Gamma,Z)$ is identified with  $H^0(V_g, \mathcal I_{Z/V_g}(1))$.
One has a natural identification
$$
\mathbf PH^0(\mathcal I_{Z/V_g}(1))^{\vee} = \lbrace \text{\it 1-dimensional linear sections of $V_g$ containing $Z$} \rbrace.
$$
\begin{definition} \it $\mathcal{P}_{g, \delta}$ is the projectivized dual of $\mathcal A$.
\end{definition} \par \noindent
From the definitions and the previous remark it follows:
\begin{proposition} $\mathcal{P}_{g,\delta}$ is the Zariski closure of the incidence correspondence
$$
\mathcal{P}_{g, \delta}^{o} := \Bigl\{ \bigl(C, (\Gamma, Z)\bigr) \in \cU_g \times \cU^-_{g, \delta} \ : Z \subset C \Bigr\}.
$$
\end{proposition} \par \noindent
Consider the projection maps
$$
\begin{CD}
{\cU^-_g} @<{\alpha}<< {\mathcal{P}^o_{g, \delta}} @>{\beta}>> {\cU^-_{g, \delta}}. \\
\end{CD}
$$
We wish to know when is $\alpha$ a dominant map.  For $1 \leq \delta < g \leq 9$, we have the following:

\begin{proposition} The morphism $\alpha$ is dominant if and only if  $\delta \leq N_g + 1 - g=\mathrm{dim}(V_g)-1$.
\end{proposition}
\begin{proof} By definition, the morphism $\beta$ is surjective. Let  $(\Gamma, Z) \in \cU^-_{g, \delta}$ be an arbitrary point, and  set $\mathrm{sing}(\Gamma):=\{p_1, \ldots,  p_{\delta} \rbrace \subset Z$. We define
$\PP_Z$ to be the locus of one-dimensional linear sections of $V_g$ containing $Z$.
Inside $\PP_Z$ we consider the space
$$
\PP_{_{\Gamma, Z}} := \bigl\{\Gamma' \in \PP_Z: \mathrm{sing}(\Gamma') \cap Z \supseteq \mathrm{sing}(\Gamma) \cap Z \bigr\},
$$

\noindent
For $p\in \mathrm{sing}(\Gamma)$, the locus
$H_p := \lbrace \Gamma' \in \PP_Z: p \in \mathrm{sing}(\Gamma') \rbrace
$
 is a hyperplane in $\PP_Z$. Indeed, we identify $\PP_Z$ with the family of linear spaces $L\in \textbf G(g-1, N_g)$ such that $\langle Z\rangle\subset L$. By the definition of the cluster $Z$, it follows that $\mathbb T_p(V_g) \cap \langle Z \rangle$ is a line. For $L\in \PP_Z$, the intersection  $L \cap V_g$ is singular at $p$ if and only if
 $\mathrm{dim } \  L\cap \mathbb T_p(V_g) \geq 2$.  This is obviously a codimension $1$ condition in $\PP_Z$.
Therefore, if for $1\leq i\leq \delta$ we define the hyperplane $H_i := \{L=\langle \Gamma'\rangle \in \PP_Z: \mathrm{dim }\  L\cap \mathbb T_{p_i}(V_g) \geq 2\}$, then
 $$
 \PP_{{\Gamma, Z}} = H_1 \cap \dots \cap H_{\delta}.
 $$ This shows that the general point in $\beta^{-1}(C, Z)$ corresponds to a smooth curve $C\supset Z$. We now fix a general point $(\Gamma, Z)\in \cU_{g, \delta}^-$, corresponding to a general cluster $Z\in \mathfrak{Z}_{g-1}$.

\vskip 5pt

\noindent
\it Claim:  $\PP_{{\Gamma, Z}}$ has codimension $\delta$ in $\PP_Z$; its general element is a nodal curve with $\delta$ nodes.  \rm \par 

\noindent
\emph{Proof of the claim:} Indeed $\PP_Z$ is a general fibre of the projective bundle $\cU_g^-\rightarrow \mathfrak Z_{g-1}$. The claim follows since
$\mbox{codim}(\cU_{g, \delta}^-, \cU_g^-)=\delta$.

 \vskip 4pt
\noindent
The fibre $\alpha^{-1}\bigl((C, Z)\bigr)$ over a general point $(C, Z)\in \cU_g^-$, is the union of $\binom {g-1}{\delta}$ linear spaces
$H_1 \cap \dots \cap H_{\delta} \subset \PP_Z$ as above. By the claim above, when $Z\in \mathfrak{Z}_{g-1}$ is a general cluster, this is a union of linear spaces $\PP_{_{\Gamma, Z}}$ as before, having codimension $\delta$ in
$\PP_Z$. Hence $\alpha^{-1}\bigl((C, Z)\bigr)$ is not empty if and only if $\delta \leq \mathrm{dim} \ \PP_Z$, that is, $\delta\leq N_g-g+1$. \end{proof}
\par \noindent
Let us fix the following notation:
\begin{definition} \it \ \par \noindent (1) $\overline{\mathbb P}_{g,\delta} := \bigl(\P^o_{g, \delta}\bigr)^{\mathrm{ss}} \dblq \mathrm{Aut}(V_g)$. \par \noindent
(2) $\overline{\alpha}: \overline{\mathbb P}_{g, \delta} \to \ss_g^- $ is the morphism induced by $\alpha$ at the level of quotients.
\end{definition} \par \noindent
Note that $\beta: \P_{g, \delta} \to \cU^-_{g,\delta}$ is a projective bundle and $\mbox{Aut}(V_g)$ acts linearly on its fibres, therefore
$\beta$ descends to a projective bundle on $B^-_{g,\delta}$.  Then it follows from the previous remark that $\P_{g, \delta}$ is birationally isomorphic to $\PP^{N_g - g + 1} \times B^-_{g, \delta}$. To finish the proof of the unirationality of $\mathcal S^-_g$, we proceed as follows:
\begin{theorem}\label{assu} Let $7 \leq g \leq 9$ and assume that (i) $B^-_{g, \delta}$ is unirational and (ii)   $\delta \leq N_g - g + 1$.
Then $\mathcal \ss^-_g$ is unirational.
\end{theorem}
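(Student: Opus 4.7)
The plan is to combine the two structural facts already in hand: the total space $\overline{\mathbb P}_{g, \delta}$ is birationally a projective bundle over $B^-_{g, \delta}$, and, under hypothesis (ii), the other projection from $\mathcal{P}^o_{g, \delta}$ lands dominantly on (the Mukai model of) $\ss^-_g$. Unirationality of $\ss^-_g$ should then follow by a short ``product with a projective space, then dominate'' argument, with essentially no technical obstruction remaining.

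First, I would record the birational identification already observed just before the theorem: since $\beta$ descends to a $\PP^{N_g-g+1}$-bundle on $B^-_{g,\delta}$, and every projective bundle is Zariski-locally (hence birationally) trivial, one has
\[
\overline{\mathbb P}_{g, \delta} \ \sim\ \PP^{N_g - g + 1} \times B^-_{g, \delta}.
\]
Hypothesis (i) supplies a dominant rational map $\PP^{M} \dashrightarrow B^-_{g, \delta}$ for some $M$; taking the product with the identity on $\PP^{N_g - g + 1}$ yields a dominant rational map from $\PP^{M + N_g - g + 1}$ to $\overline{\mathbb P}_{g, \delta}$. Hence $\overline{\mathbb P}_{g, \delta}$ is unirational.

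Next, I would invoke the previous proposition: hypothesis (ii), namely $\delta \leq N_g - g + 1 = \dim(V_g) - 1$, is exactly the condition for $\alpha: \mathcal{P}^o_{g, \delta} \to \cU^-_g$ to be dominant. Passing to $\mathrm{Aut}(V_g)$-quotients and composing with the birational isomorphism $\phi^-_g: Q^-_g \to \ss^-_g$ established earlier, one obtains a dominant rational map $\overline{\mathbb P}_{g, \delta} \dashrightarrow \ss^-_g$. Composing this with the unirational parameterization of $\overline{\mathbb P}_{g, \delta}$ from the previous step yields a dominant rational map from a projective space to $\ss^-_g$, which is precisely the assertion that $\ss^-_g$ is unirational.

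I do not expect any real obstacle inside the proof of this theorem itself; the argument is a purely formal assembly of already-proved statements. The genuine work will lie downstream, in verifying hypotheses (i) and (ii) for concrete $(g, \delta)$ with $7 \leq g \leq 9$: one must produce, case by case, a unirational parameterization of the moduli space $B^-_{g, \delta}$ of $\delta$-nodal irreducible curves together with an odd theta-characteristic on the normalization, and one must check the arithmetic bound $\delta \leq \dim(V_g) - 1$. The present theorem is merely the organizational statement packaging those case analyses into the unirationality of $\ss^-_g$.
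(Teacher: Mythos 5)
Your proof is correct and follows the same route as the paper: observe that $\overline{\mathbb P}_{g,\delta}$ is birationally $\PP^{N_g-g+1}\times B^-_{g,\delta}$ (hence unirational by (i)), and that (ii) is exactly the criterion from the preceding proposition for the projection to $\cU^-_g$ to dominate, so that after taking $\mathrm{Aut}(V_g)$-quotients and applying $\phi_g^-$ one gets a dominant rational map $\overline{\mathbb P}_{g,\delta}\dashrightarrow \ss_g^-$. This matches the paper's argument essentially verbatim.
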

\begin{proof} By assumption (ii), the map  $\alpha: \P^o_{g, \delta} \to \cU^-_g$ is dominant, Hence the same is true for the induced morphism $\overline{\alpha}: \overline {\mathbb P}_{g, \delta}  \to \overline {\mathcal S}^-_g$. By (i) and the above remark, $\overline {\mathbb P}_{g,\delta}$ is unirational. Therefore $\ss^-_g$ is unirational as well. \end{proof}

Theorem \ref{assu} has some straightforward applications. The case
$\delta = g-1$
is particularly convenient, since $B^-_{g, g-1}$  is isomorphic to the moduli space of integral curves of geometric genus $1$ with $g-1$ nodes. For $\delta=g-1$, the assumptions of Theorem \ref{assu} hold when $g\leq 8$. In this range, the unirationality of $\mathcal S^-_g$ follows from that of $B_{g,g-1}^-$.
\begin{theorem}\label{uni8}
$B_{g,g-1}^-$ is unirational for $g \leq 10$. \end{theorem}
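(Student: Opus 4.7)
The plan is to parameterize $B_{g,g-1}^-$ by a rational projective bundle followed by a finite étale cover accounting for discrete pair data.

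First I would reduce to unirationality of $\Delta_g^{g-1}\subset\mm_g$, the locus of integral $(g-1)$-nodal stable curves of arithmetic genus $g$. On the elliptic normalization $N$, the condition $\eta_N^{\otimes 2}=\omega_N=\OO_N$ forces $\eta_N\in N[2]$. A direct computation of $H^0(X,\eta)$ on the quasi-stable model $X=N\cup E_1\cup\cdots\cup E_{g-1}$ --- using that a section on each exceptional $E_i\cong\PP^1$ is determined by its values at the two gluing points --- gives $H^0(X,\eta)\cong H^0(N,\eta_N)$, so the odd parity condition singles out $\eta_N=\OO_N$. Thus the forgetful $\pi:B_{g,g-1}^-\to\Delta_g^{g-1}$ has degree $1$ and is birational, reducing the statement to unirationality of $\Delta_g^{g-1}$.

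Second I would parameterize $\Delta_g^{g-1}$ by triples $(E,D,P)$, where $(E,0)$ is a pointed elliptic curve, $D\in|L_0|$ is an effective divisor of degree $2(g-1)$ in the linear system $L_0:=\OO_E(2(g-1)\cdot 0)$, and $P$ is a partition of $\mathrm{supp}(D)$ into $g-1$ unordered pairs. The pairs become the preimages of the $g-1$ nodes after gluing, and the constraint $D\in|L_0|$ (equivalently $\sum_i(x_i+y_i)=0$ in the group law on $E$) replaces the quotient by translations of $E$. Forgetting $P$, the parameter space is the projective bundle $Y:=\PP(\pi_*L_0)\to\cM_{1,1}$, where $\pi$ is the universal elliptic curve; since $\cM_{1,1}$ is birational to $\mathbb{A}^1$ and $\pi_*L_0$ is a rank-$(2g-2)$ vector bundle on $\mathbb{A}^1$, hence free by Quillen--Suslin, $Y$ is birational to $\mathbb{A}^1\times\PP^{2g-3}$, a rational variety of the expected dimension $2g-2=\dim\Delta_g^{g-1}$.

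Adding $P$ gives a finite étale cover $Y^{\mathrm{pair}}\to Y$ of degree $(2g-2)!/((g-1)!\,2^{g-1})$ over the locus where $D$ is squarefree; a finite cover of a rational variety is unirational. The natural forgetful map $Y^{\mathrm{pair}}\to\Delta_g^{g-1}$ is dominant by construction, so $B_{g,g-1}^-$ is unirational. The main technical point is the global existence of the line bundle $L_0$ over $\cM_{1,1}$, which is ensured by the canonical origin section on the universal elliptic curve; the remaining verifications (squarefreeness on a dense open, and accounting for the finite reparameterization group $E[2(g-1)]\rtimes\{\pm 1\}$ acting on the fibers of $Y^{\mathrm{pair}}\to\Delta_g^{g-1}$) are routine and do not affect unirationality.
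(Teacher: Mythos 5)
Your first paragraph is fine and agrees with the paper: since $\eta_N^{\otimes 2}\cong\omega_N=\OO_N$ forces $\eta_N$ to be $2$-torsion, and the Mayer--Vietoris computation $H^0(X,\eta)\cong H^0(N,\eta_N)$ shows that odd parity forces $\eta_N=\OO_N$, the forgetful map $B_{g,g-1}^-\to\Delta_g^{g-1}$ is birational (indeed the paper notes $B_{g,g-1}^-$ ``is isomorphic to the moduli space of integral curves of geometric genus $1$ with $g-1$ nodes'').

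The gap is in the second step. You build $Y=\PP(\pi_*L_0)$, a $\PP^{2g-3}$-bundle over $\cM_{1,1}\cong\mathbb{A}^1$, hence rational, and then pass to the finite cover $Y^{\mathrm{pair}}\to Y$ encoding the partition $P$ of $\mathrm{supp}(D)$ into pairs, concluding with the assertion that ``a finite cover of a rational variety is unirational.'' This is false: any curve of genus $\geq 1$ is a finite cover of $\PP^1$ and is not unirational, and more generally a finite cover of $\PP^n$, even one that is \'etale over a dense open, need not be unirational. To exhibit a dominant rational map \emph{from} a rational variety \emph{to} $Y^{\mathrm{pair}}$ you would need a concrete parameterization, and the natural candidates do not help: the variety of ordered tuples $(E,x_1,y_1,\ldots,x_{g-1},y_{g-1})$ with $\sum(x_i+y_i)=0$ has fibre $E^{2g-3}$ over $[E]\in\cM_{1,1}$, an abelian variety, so the obvious finite covers of $Y^{\mathrm{pair}}$ are themselves far from rational, and the monodromy of the partition data is exactly what blocks the argument. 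This is precisely the point the paper's plane-cubic trick is designed to circumvent: realize the elliptic normalization as a plane cubic $E\subset\PP^2$ and replace each glued pair $\{a_i,b_i\}$ by the line $l_i=\overline{a_ib_i}$ together with its residual intersection point $x_i\in E\cap l_i$. The resulting parameter space $\Pi_\delta\subset I^\delta\times\PP H^0(\PP^2,\OO_{\PP^2}(3))$, with $I$ the point-line incidence, is a $\PP^{9-\delta}$-bundle over the rational variety $I^\delta$ (the conditions $x_i\in E$ are linear in the cubic), hence rational for $\delta=g-1\leq 9$; the gluing map to $B^-_{g,g-1}$ is dominant because every elliptic curve is a plane cubic. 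This replaces your non-obviously-rational finite cover by an honest projective bundle over a rational base, and also explains the genus bound $g\leq 10$, which is absent (and unjustified to omit) from your argument.
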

\begin{proof} Let $I \subset \mathbf P^2 \times (\mathbf P^{2})^{\vee}$ be the natural incidence correspondence consisting of pairs $(x,\ell)$
such that $x$ is a point on the line $\ell$. For $\delta \leq 9$, we define
$$
\Pi_{\delta}:= \bigl\{ (x_1, \ell_1, \dots, x_{\delta}, \ell_{\delta}, E) \in I^{\delta} \times \mathbf PH^0(\PP^2, \OO_{\PP^2}(3)):  \ x_1, \dots, x_{\delta} \in E \bigr\}.
$$
Then there exists a rational map $ f_{\delta}: \Pi_{\delta} \dashrightarrow B_{\delta + 1, \delta}^-$ sending $(x_1,\ell_1, \dots, x_{\delta}, \ell_{\delta}, E)$ to the  moduli point of the $\delta$-nodal, integral  curve $C$ obtained from  the elliptic curve $E$, by identifying the pairs of points in $E\cap \ell_i-\{x_i\}$ for $1\leq i\leq \delta$. It is easy to see that
$\Pi_{\delta}$ is rational if $\delta \leq 9$. Clearly $f_{\delta}$ is dominant, just because every elliptic curve can be realized as a plane cubic. It follows that $B_{\delta + 1, \delta}^-$ is unirational when $\delta \leq 9$. \end{proof} \par \noindent
Unfortunately one cannot apply Theorem \ref{uni8} to the case $g = 9$, since the assumptions of Theorem \ref{assu} are satisfied only if $\delta \leq 5$.

\section{The Scorza curve}
This section serves as a preparation for the proof of Theorem \ref{degen} and we discuss in detail a correspondence $T_{\eta}\subset C\times C$ associated to each (non-vanishing) theta-characteristic
$[C, \eta]\in \cS_g^+-\thet$. This correspondence was used by G. Scorza \cite{Sc} to provide a birational isomorphism between $\cM_3$ and $\cS_3^+$
(see also \cite{DK}), and recently in \cite{TZ}, where several conditional statements of Scorza's have been rigourously established.

For a fixed theta-characteristic $[C, \eta]\in \cS_g^+-\thet$,  we consider the curve
$$T_{\eta}:=\bigl\{(x, y)\in C\times C: H^0(C, \eta\otimes \OO_C(x-y))\neq 0\bigr\}.$$
By Riemann-Roch, it follows that $T_{\eta}$ is a symmetric correspondence which misses the diagonal $\Delta\subset C\times C$. The curve $T_{\eta}$ has a natural fixed
point free involution and we denote by $f: T_{\eta}\rightarrow \Gamma_{\eta}$ the associated \'etale double covering. Under the assumption that $T_{\eta}$ is a reduced curve, its class is computed in \cite{DK} Proposition 7.1.5:
$$T_{\eta}\equiv (g-1)F_1+(g-1)F_2+\Delta.$$
Here $F_i\in H^2(C\times C, \mathbb Q)$ denotes the class of the fibre of the $i$-th projection $C\times C\rightarrow C$.
\begin{theorem}\label{scorza}
For a general theta-characteristic $[C, \eta]\in \cS_g^+$, the Scorza curve $T_{\eta}$ is a smooth curve of genus $g(T_{\eta})=3g(g-1)+1$.
\end{theorem}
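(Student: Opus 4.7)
The plan is to compute the arithmetic genus from the class formula via adjunction, then establish smoothness by combining an infinitesimal criterion with a dimension count on the universal Scorza curve. For the genus: using the class $T_\eta\equiv(g-1)(F_1+F_2)+\Delta$ quoted from \cite{DK} together with $F_i^2=0$, $F_1\cdot F_2=F_i\cdot\Delta=1$, $\Delta^2=2-2g$ and $K_{C\times C}\equiv(2g-2)(F_1+F_2)$, one computes $T_\eta^2=2g(g-1)$ and $T_\eta\cdot K_{C\times C}=4g(g-1)$, so adjunction yields
$$2\,p_a(T_\eta)-2=6g(g-1),\qquad p_a(T_\eta)=3g(g-1)+1,$$
matching the asserted geometric genus. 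It remains to show that $T_\eta$ is smooth.

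\textbf{Infinitesimal criterion.} At a point $(x_0,y_0)\in T_\eta$ with $h^0(\eta(x_0-y_0))=1$ (the generic case for $\eta\notin\thet$), the Zariski tangent space $T_{(x_0,y_0)}T_\eta$ coincides with the kernel of the cup-product map
$$\mu_{x,y}\colon T_xC\oplus T_yC\longrightarrow H^1(C,\eta(x-y)),\qquad(v,w)\longmapsto(\partial_xv-\partial_yw)\cup s,$$
where $s$ generates $H^0(\eta(x-y))$ and $\partial\colon T_xC\to H^1(\OO_C)$ is the infinitesimal Abel-Jacobi. Since $h^1(\eta(x-y))=1$ by Riemann-Roch (using $\eta^{\otimes 2}=K_C$), smoothness at $(x,y)$ is equivalent to $\mu_{x,y}\ne 0$. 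Via Serre duality $H^1(\eta(x-y))\cong H^0(\eta(y-x))^\vee$ and the Petri identification of cup product with multiplication, $\mu_{x,y}=0$ translates into the canonical form $s\cdot t\in H^0(K_C)$ vanishing simultaneously at $x$ and $y$, where $t$ generates $H^0(\eta(y-x))$. In particular, $T_\eta$ is smooth at $(x,y)$ as soon as the effective divisors $(s)$, $(t)$ each avoid $\{x,y\}$.

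\textbf{Dimension count and main obstacle.} For $\eta\notin\thet$, the incidences $x\in(s)$ or $y\in(t)$ would force $h^0(\eta)>0$ and are therefore excluded by non-effectivity; the remaining possibilities $y\in(s)$ and $x\in(t)$ translate respectively into $h^0(\eta(x-2y))>0$ and $h^0(\eta(y-2x))>0$, each a codimension-two condition on the universal surface $\mathcal{X}=\mathcal{C}\times_{\cS_g^+}\mathcal{C}$. Intersected with the codimension-one universal Scorza curve $\mathcal{T}\subset\mathcal{X}$, these loci have dimension strictly below $\dim\cS_g^+$ and hence project non-dominantly. Consequently, for generic $[C,\eta]$ the divisors $(s)$ and $(t)$ miss $\{x,y\}$ at every $(x,y)\in T_\eta$, which by the infinitesimal criterion yields smoothness of $T_\eta$. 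The main obstacle is precisely this dimension estimate: verifying that the classifying map $\mathcal{X}\to\mathrm{Pic}^{g-2}(\mathcal{C}/\cS_g^+)$, $(C,\eta,x,y)\mapsto\eta(x-2y)$, meets the universal Brill-Noether locus $W^0_{g-2}$ transversely along $\mathcal{T}$ with the expected codimension drop, together with a separate check that $h^0(\eta(x-y))=1$ holds at every point of $T_\eta$ (ruling out unexpected jumps of $h^1$), is the substantive technical ingredient and can be settled by a direct infinitesimal transversality calculation on the universal surface.
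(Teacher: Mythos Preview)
Your arithmetic genus computation is fine, and your infinitesimal analysis correctly recovers the paper's criterion (equation \ref{sing}): a point $(x,y)\in T_\eta$ is singular precisely when \emph{both} $h^0(\eta(x-2y))>0$ and $h^0(\eta(y-2x))>0$.

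The dimension count, however, contains a genuine error. You treat each condition separately as a codimension-$2$ locus in $\mathcal{X}$ and then intersect with the codimension-$1$ universal Scorza curve $\mathcal{T}$, concluding that the result has dimension below $\dim\cS_g^+$. But the condition $h^0(\eta(x-2y))>0$ already forces $h^0(\eta(x-y))>0$ (a section of $\eta(x-2y)$ is in particular a section of $\eta(x-y)$), so the locus $\{h^0(\eta(x-2y))>0\}$ is \emph{contained} in $\mathcal{T}$. Intersecting with $\mathcal{T}$ therefore does not lower the dimension at all: this locus has dimension exactly $\dim\cS_g^+$ and dominates the base. Concretely, for a general $[C,\eta]$ the curve $T_\eta$ does contain finitely many points $(x,y)$ with $h^0(\eta(x-2y))>0$ (these are exactly the ramification points of one of the projections $T_\eta\to C$), so you cannot hope to rule out either condition individually.

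What must be shown is that the locus where \emph{both} conditions hold simultaneously fails to dominate $\cS_g^+$. A naive count gives expected codimension $4$ in $\mathcal{X}$, but the two conditions are correlated (they live inside the same curve $T_\eta$, and share a built-in symmetry), so one cannot simply add codimensions. Verifying the correct codimension is precisely the substantive step you defer with ``can be settled by a direct infinitesimal transversality calculation''; this is not routine, and your proposal does not indicate how to carry it out. The paper sidesteps the transversality issue entirely by a degeneration argument: it lets $[C,\eta]$ specialize to a spin curve on $C_0\cup E\cup D$ with $g(C_0)=g-1$, and uses limit linear series to show by induction on $g$ that no pair $(x,y)$ satisfying both conditions can exist on a general member.
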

\begin{proof} It is straightforward to show that a point $(x, y)\in T_{\eta}$ is singular if and only if
\begin{equation}\label{sing}
H^0(C, \eta\otimes \OO_C(x-2y))\neq 0 \ \mbox{ and
}H^0(C, \eta\otimes \OO_C(y-2x))\neq 0.
\end{equation}
 By induction on $g$, we show that for a general even spin curve such a pair $(x, y)$ cannot exist. We assume the result holds for a general
 $[C, \eta_C]\in \cS_{g-1}^+$. We fix a general point $q\in C$, an elliptic curve $D$ together with $\eta_D\in \mbox{Pic}^0(D)-\{\OO_D\}$ with $\eta_D^{\otimes 2}=\OO_D$
and consider the spin curve $t:=[C\cup E \cup D, \eta_{|C}=\eta_C,\ \eta_{| E}=\OO_E(1),\ \eta_{|D}=\eta_D]\in \ss_g^+$, obtained from $C\cup _q D$ by inserting an exceptional component $E$. Since the exceptional component plays no further role in the proof, we are going to suppress it.

We assume by contradiction that $t\in \ss_g^+$ lies in the closure of the locus of spin curves with singular Scorza curve. Then there exists a nodal
curve $C\cup_q D'$ semistably equivalent to $C\cup _q D$ obtained by inserting a possibly empty chain on $\PP^1$'s at the node $q$ (therefore, $p_a(D')=1$ and we may regard $D$ as a subcurve of $D'$), as well as
smooth points $x, y\in C\cup D'$ together with two limit linear series $\sigma=\{\sigma_C, \sigma_{D'}\}$ and $\tau=\{\tau_C, \tau_{D'}\}$ of type $\mathfrak g_{g-2}^0$ on $C\cup D'$ such that the underlying line bundles corresponding to $\sigma$ (resp. $\tau$) are uniquely determined twists
at the nodes of the line bundle $\eta\otimes \OO_{C\cup D'}(x-2y)$ (resp. $\eta\otimes \OO_{C\cup D'}(y-2x)$). The precise twists are determined by the limit linear series condition that each aspect of a limit $\mathfrak g_{g-2}^0$ have degree $g-2$. We distinguish three cases depending on which components of $C\cup D'$ the points $x$ and $y$ specialize.

\noindent
{\textbf{(i)}} \ $x, y\in C$. Then $\sigma_C\in H^0(C, \eta_C\otimes \OO_C(x-2y+q)), \tau_C\in H^0(C, \eta_C \otimes \OO_C(y-2x+q))$, while $\sigma_{D}, \tau_D\in H^0\bigl(D, \eta_D\otimes \OO_D((g-2)q)\bigr)$. Denoting by $\{q'\}\in D\cap \overline{(C\cup D')-D}$ the point where $D$ meets the rest of the curve, one has the compatibility conditions $$\mbox{ord}_q(\sigma_C)+\mbox{ord}_{q'}(\sigma_D)\geq g-2 \ \mbox{ } \mbox{ and  } \ \mbox{ord}_q(\tau_C)+\mbox{ord}_{q'}(\tau_D)\geq g-2,$$ which leads
to $\mbox{ord}_q(\sigma_C)\geq 1$ and $\mbox{ord}_q(\tau_C)\geq 1$, that is, we have found two points $x, y\in C$ such that $H^0(C, \eta_C(x-2y))\neq 0$ and $H^0(C, \eta_C(y-2x))\neq 0$, which contradicts the inductive assumption on $C$.

\noindent
{\textbf{(ii)}} \ $x, y\in D'$. This case does not appear if we choose $\eta_C$ such that $H^0(C, \eta_C)=0$. Indeed, for degree reason, both non-zero sections $\sigma_C, \tau_C$ must lie in the space $H^0(C, \eta_C)$.

\noindent
{\textbf{(iii)}} $x\in C, y\in D'$. For simplicity, we assume first that $y\in D$. We find that $$\sigma_C\in H^0(C, \eta_C\otimes \OO_C(x-q)), \ \sigma_D\in H^0(D, \eta_D\otimes \OO_D(g\cdot q'-2y))\ \mbox{ and }$$
 $$\tau_C\in H^0(C, \eta_C\otimes \OO_C(2q-2x)), \ \tau_D\in H^0(D, \eta_D\otimes \OO_C(y+(g-3)\cdot q')).$$
 We claim that $\mbox{ord}_q(\sigma_C)=\mbox{ord}_q(\tau_C)=0$ which can be achieved by a generic choice of $q\in C$. Then $\mbox{ord}_{q'}(\sigma_D)\geq g-2$, which implies that $\eta_D=\OO_D(2y-2q)$. Similarly, $\mbox{ord}_q(\tau_D)\geq g-2$ which
 yields that $\eta_D=\OO_D(q-y)$, that is, $\eta_D^{\otimes 3}=\OO_D$. Since $\eta_D$ was assumed to be a non-trivial point of order $2$
 this leads to a contradiction. Finally, the case $y\in D'-D$, that is, when $y$ lies on an exceptional subcurve $E'\subset D'$ is dealt with similarly: Since $\mbox{ord}_q(\sigma_C)=\mbox{ord}_q(\tau_C)=0$, by compatibility, after passing through the component $E'$, one obtains that
 $\mbox{ord}_{q'}(\sigma_D)\geq g-2$. Since $\sigma_D\in H^0(D, \eta_D\otimes \OO_D((g-2)q'))$ and $\eta_D\neq \OO_D$, we obtain a contradiction
\end{proof}

\section{The stack of degenerate odd theta-characteristics}
In this section we define a Deligne-Mumford stack $\textbf{X}_g\rightarrow \overline{\textbf{S}}_g^-$ parameterizing limit linear series
$\mathfrak g_{g-1}^0$ which appear as limits of degenerate theta-characteristics on smooth curves. The push-forward of $[\textbf{X}_g]$
is going to be precisely our divisor $\zz_g$. Having a good description of $\textbf{X}_g$ over the boundary will enable us to determine
all the coefficients in the expression of $[\zz_g]$ in $\mbox{Pic}(\ss_g^-)$ and thus prove Theorem \ref{degen}. We will use throughout the
test curves in $\ss_g^-$ constructed in Section 1.

We first define  a partial compactification
$\pem_g:=\textbf{M}_g \cup \widetilde{\Delta}_0 \cup \ldots  \cup \widetilde{\Delta}_{[g/2]}$ of $\overline{\textbf{M}}_g$,
obtaining by adding to $\textbf{M}_g$ the open sub-stack
$\widetilde{\Delta}_0\subset \Delta_0$ of one-nodal irreducible curves
$[C_{yq}:=C/y\sim q]$, where $[C, y, q]\in \cM_{g-1, 2}$ is a Brill-Noether general curve together with their degenerations $[C\cup D_{\infty}]$ where $D_{\infty}$ is an elliptic curve with $j(D_{\infty})=\infty$,  as well as the open substacks
$\widetilde{\Delta}_j \subset \Delta_j$ for $1\leq j\leq [g/2]$ classifying curves $[C\cup_y D]$, where $[C]\in \cM_{j}$ and $[D]\in \cM_{g-j}$ are  Brill-Noether general curves in the respective moduli spaces.
 Let
$p:\pem_{g, 1}\rightarrow \pem_g$ be the
universal curve. We  denote
$\pes_g^-:=\pi^{-1}(\pem_g)\subset \overline{\textbf{S}}_g^-$ and
note that for all $0\leq j\leq [g/2]$ the boundary divisors
$A_j':=A_j\cap \ps_g^-, \
B_j':=B_j\cap \ps_g^-$
are mutually disjoint inside $\ps_g^-$. Finally, we consider
$\cZ:=\pes_g^-\times _{\pem_{g}} \pem_{g, 1}$ and denote by
$p_1:\cZ\rightarrow \pes_g^-$ the projection.

Following the local description of the projection $\overline{\textbf{S}}_g^-\rightarrow
\overline{\textbf{M}}_g$ carried out in \cite{C}, in order to obtain the universal spin curve over $\pes_g^-$
one has first to
blow-up the codimension $2$ locus $V\subset \cZ$ corresponding to
points
$$v=\Bigl(\bigl[C\cup_{\{y, q\}} E, \eta_C^{\otimes 2}=K_C, h^0(\eta_C)\equiv 1\mbox{ }  \mathrm{ mod }\mbox{ }  2, \mbox{ } \eta_E=\OO_E(1)\bigr],
\  \nu(y)=\nu(q)\Bigr) \in B_0'\times _{\pem_
g} \pem_{g, 1}$$ (recall that  $\nu:C\rightarrow C_{yq}$ denotes
the normalization map, so $v$ corresponds to the marked point specializing to the node of the curve $C_{yq}$). Suppose that $(\tau_1, \ldots, \tau_{3g-3})$ are
local coordinates in an \'etale neighbourhood of $[C\cup_{\{y, q\}}
E, \eta_C, \eta_E]\in \ps_g^-$, such that the local equation of
the divisor $B_0'$ is $(\tau_1=0)$. Then $\cZ$  around $v$
admits local coordinates $(x, y, \tau_1, \ldots, \tau_{3g-3})$ verifying
the equation $xy=\tau_1^2$,  in particular, $\cZ$ is singular along $V$.
Next, for $1\leq j\leq [g/2]$ one blows-up the codimension $2$ loci
$V_j\subset \cZ$ consisting of points
$$\Bigl(\bigr[C\cup_q D, \eta_C, \eta_D\bigr], \ q\in C\cap D\Bigr)\in (A_j'\cup B_j')\times_{\pem _g}\pem_{g, 1}.$$
This corresponds to inserting an exceptional component in each spin curve in $\pi^*(\widetilde{\Delta}_j)$.
We denote by $$\mathcal{C}:=\mathrm{Bl}_{V\cup V_1\cup \ldots \cup V_{[g/2]}}(\cZ)$$ and  by
$f:\mathcal{C} \rightarrow \pes_g^-$ the induced family of spin curves.
Then for every $[X, \eta, \beta]\in \ps_g^-$ we have an isomorphism between $f^{-1}([X,
\eta, \beta])$ and the quasi-stable curve $X$.

There exists a spin line bundle $\P \in
\mathrm{Pic}(\mathcal{C})$ of relative degree $g-1$ as well as a morphism of $\OO_{\mathcal C}$-modules
$B:\P^{\otimes 2}\rightarrow \omega_f$ having the property
that $\P_{| f^{-1}([X, \eta, \beta])}=\eta$ and $B_{| f^{-1}([X,
\eta, \beta])}=\beta:\eta^{\otimes 2}\rightarrow \omega_X$, for all
spin curves $[X, \eta, \beta]\in \ps_g^-$. We note that for the even moduli
space $\ps_g^+$ one has an analogous construction of the universal spin curve.

Next we define the stack $\tau:\textbf{X}_g\rightarrow \pes_g^-$ classifying limit $\mathfrak g_{g-1}^0$ which are twists of degenerate odd-spin curves. For a tree-like curve $X$ we denote by $\overline{G}^r_d(X)$ the scheme of limit linear series $\mathfrak g^r_d$.
The fibres of the morphism $\tau$ have the following description:

\noindent $\bullet$ $\tau^{-1}(\textbf{S}_g^-)$ parameterizes triples $\bigl([C, \eta], \sigma, x\bigr)$, where $[C, \eta]\in \cS_g^-, \ x\in C$ is a point and $\sigma \in \PP H^0(C, \eta)$ is a section such that $\mbox{div}(\sigma)\geq 2x$.

\noindent $\bullet$  For $1\leq j\leq [g/2]$ the inverse image $\tau^{-1}(A_j'\cup B_j')$ parameterizes elements of the form
$$\Bigl(X, \sigma\in \overline{G}_{g-1}^0(X), \ x\in X_{\mathrm{reg}} \Bigr),$$ where
$(X, x)$ is a $1$-pointed quasi-stable curve semistably equivalent to the underlying curve of a spin curve $[C\cup_q E \cup _{q'} D, \ \eta_C, \eta_E, \eta_D]\in A_j'\cup B_j'$, with $E$ denoting the exceptional component, $g(C)=j, \ g(D)=g-j, \{q\}=C\cap E, \{q'\}=E\cap D$ and
$$\sigma_C\in \PP H^0\bigl(C, \eta_C\otimes \OO_C((g-j)q)), \ \sigma_D\in \PP H^0\bigl(D, \eta_D\otimes \OO_D(jq')\bigr), \sigma_E\in \PP H^0(E, \OO_E(g-1))$$ are aspects of the limit linear series $\sigma$ on $X$. Moreover, we require that  $\mbox{ord}_x(\sigma)\geq 2$.

\noindent $\bullet$ $\tau^{-1}(B_0')$ parameterizes elements $\bigl(X, \ \eta\in \mathrm{Pic}^{g-1}(X), \ \sigma\in \PP H^0(X, \eta),\  x\in X_{\mathrm{reg}}\bigr)$, where $(X, x)$ is a $1$-pointed quasi-stable curve equivalent to the curve underlying a point
$[C\cup_{\{y, q\}} E, \eta_C, \ \eta_E]\in B_0'$, the line bundle $\eta$ on $X$ satisfies $\eta_{|C}=\eta_C$ and $\eta_{| E}=\eta_E$ and $\eta_{| Z}=\OO_Z$ for the remaining  components of $X$. Finally, we require $\mbox{ord}_x(\sigma)\geq 2$.

\noindent $\bullet$ $\tau^{-1}(A_0')$ corresponds to points $\bigl(X, \eta \in \mathrm{Pic}^{g-1}(X), \sigma\in \PP H^0(X, \eta),\ x\in X_{\mathrm{reg}}\bigr)$, where $(X, x)$ is a $1$-pointed quasi-stable curve equivalent to the curve underlying a point $[C_{yq}, \eta_{C_{yq}}]\in A_0'$,  and if $\mu: X\rightarrow C_{yq}$ is the map contracting all exceptional components, then $\mu^*(\eta_{C_{yq}})=\eta$ (in particular $\eta$ is trivial along exceptional components), and finally $\mbox{ord}_x(\sigma) \geq 2$.

Using general constructions of stacks of limit linear series  cf. \cite{EH1}, \cite{F2}, it is clear that $\textbf{X}_g$ is a Deligne-Mumford stack. There exists a proper morphism $$\tau: \textbf{X}_g\rightarrow \pes_g^-$$ that factors through the universal curve and we denote by $\chi:\textbf{X}_g\rightarrow \cC$ the induced morphism, hence $\tau=f\circ \chi$. The push-forward of the coarse moduli space $\tau_*([\mathcal{X}_g])$ equals scheme-theoretically
$\zz_g\cap \ps_g^-$. It appears possible to extend $\textbf{X}_g$ over the entire $\overline{\textbf{S}}_g^-$ but this is not necessary in order to prove Theorem \ref{kodaira} and we skip the details.

We are now in a position to calculate the class of the divisor $\zz_g$ and we expand its class in the Picard group of $\ss_g^-$
\begin{equation}\label{expansion}
\zz_g\equiv \bar{\lambda}\cdot \lambda-\bar{\alpha_0}\cdot \alpha_0-\bar{\beta_0}\cdot \beta_0-\sum_{i=1}^{[g/2]} \bar{\alpha}_i\cdot \alpha_i-\sum_{i=1}^{[g/2]} \bar{\beta_i}\cdot \beta_i\in \mathrm{Pic}(\ss_g^-),
\end{equation}
where $\bar{\lambda}, \bar{\alpha_i}, \bar{\beta_i}\in \mathbb Q$ for $i=0, \ldots, [g/2]$.
We start by determining the coefficients of the divisors $\alpha_i$ and $\beta_i$ for $1\leq i\leq [g/2]$.

\begin{proposition} For $1\leq i\leq [g/2]$ we have that $F_i\cdot \zz_{g}=4(g-i)(i-1)$ and the intersection is everywhere transverse.
It follows that $\bar{\alpha_i}=2(g-i)$.
\end{proposition}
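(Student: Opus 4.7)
The plan is to compute $F_i\cdot\zz_g$ by lifting $F_i$ along the map $\tau:\textbf{X}_g\to\pes_g^-$ and counting $\tau^{-1}(F_i)$ as a $0$-cycle on $\textbf{X}_g$. Since $\tau_*[\textbf{X}_g]=[\zz_g\cap\pes_g^-]$, the projection formula identifies $F_i\cdot\zz_g$ with $\deg\tau^{-1}(F_i)$. A point of $\tau^{-1}(F_i)$ over $y\in C$ is a triple $(X_y,\sigma,x)$ with $X_y=C\cup_yE\cup_qD$, $\sigma\in\overline{G}^0_{g-1}(X_y)$ contained in the spin bundle, and $x\in X_{y,\mathrm{reg}}$ satisfying $\mathrm{ord}_x(\sigma)\geq 2$.

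First I would describe the unique generic limit $\mathfrak g^0_{g-1}$ inside the spin bundle. Brill-Noether generality gives $h^0(\eta_C^-)=1$ with unique section $s_C$, $h^0(\eta_D^+)=0$, and $h^0(\eta_D^+(q))=1$ with unique section $u$; compatibility at the nodes forces the aspects
\[
\sigma_C=s_C\cdot y^{g-i},\qquad \sigma_E=y^{i-1}q^{g-i},\qquad \sigma_D=u\cdot q^{i-1}.
\]
For generic $y\in C$, the smooth-point zero divisors $[s_C]\subset C$ and $[u]\subset D$ are reduced, and $\sigma_E$ vanishes only at the nodes, so $\tau^{-1}(y)=\emptyset$.

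The contributions are concentrated at the $i-1$ zeros of $s_C$. At each $y\in\mathrm{supp}[s_C]$, $\mathrm{ord}_y(s_C)=1$ raises $\mathrm{ord}_y(\sigma_C)$ to $g-i+1$ and forces $\mathrm{ord}_y(\sigma_E)=i-2$ by refinement, opening a one-parameter family of limit linear series with two branches at the node $q$. The branch with $\mathrm{ord}_q(\sigma_D)=i-1$ only frees a simple moving zero of $\sigma_E$ on $E\setminus\{y,q\}$ and contributes nothing. The branch with $\mathrm{ord}_q(\sigma_D)=i-2$ fixes $\sigma_E=y^{i-2}q^{g-i+1}$ and lets $\sigma_D=v\cdot q^{i-2}$ range over the pencil $\PP H^0(D,\eta_D^+(2q))$, a base-point-free $\mathfrak g^1_{g-i+1}$ on the genus-$(g-i)$ curve $D$. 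By Riemann-Hurwitz, its ramification divisor has degree $2(g-i+1)+2(g-i)-2=4(g-i)$, so exactly $4(g-i)$ choices of $v$ produce a section with a double zero at a smooth point of $D$. This gives $4(g-i)$ points of $\tau^{-1}(y)$ per special $y$, for a total of $\deg\tau^{-1}(F_i)=4(g-i)(i-1)$.

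The hard part will be verifying that each of these $4(g-i)(i-1)$ points of $\tau^{-1}(F_i)$ is a reduced intersection in $\textbf{X}_g$, so that the intersection is everywhere transverse. This reduces to three Brill-Noether genericity statements on $([C,\eta_C^-],[D,\eta_D^+,q])$: the zeros of $s_C$ on $C$ are simple, the ramification of $|\eta_D^+(2q)|$ on $D$ is simple, and the tangent map to $\tau$ at each $(X_y,\sigma,x)$ surjects onto the normal direction of $F_i\subset A_i$. Granted these, $F_i\cdot\lambda=0$, $F_i\cdot\alpha_0=F_i\cdot\beta_0=0$, and $F_i\cdot\alpha_j=F_i\cdot\beta_j=0$ for $j\neq i$, together with $F_i\cdot\beta_i=0$ and $F_i\cdot\alpha_i=2-2i$, reduce the class expansion \eqref{expansion} to $F_i\cdot\zz_g=(2i-2)\bar\alpha_i$, whence $\bar\alpha_i=2(g-i)$.
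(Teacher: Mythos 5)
Your proof is correct and follows essentially the same strategy as the paper's: you reduce the count to the $i-1$ zeros of the unique section $s_C\in H^0(C,\eta_C^-)$ times the $4(g-i)$ ramification points of the base-point-free complete pencil $|\eta_D^+(2q)|$ on $D$, matching the paper's case analysis (the paper organizes by where the double point $x$ lies, you organize by whether $y$ is generic or a zero of $s_C$, but the substance is identical). Both you and the paper defer the transversality check to genericity of the choices, with roughly the same level of detail.
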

\begin{proof} We recall from the definition of $F_i$ that we have fixed theta-characteristics of opposite parity  $\eta_C^-\in \mbox{Pic}^{i-1}(C)$ and
$\eta_D^+\in \mbox{Pic}^{g-i-1}(D)$. Choose a point $t=(X, \eta, \sigma, x)\in \tau^{-1}(F_i)$. It is a simple exercise to show that the "double" point $x$ of $\sigma\in \overline{G}^0_{g-1}(X)$ cannot specialize to the exceptional component, therefore one has only two cases to consider depending on whether $x$ lies on $C$ or on $D$. Assume first that $x\in C$ and then $\sigma_C\in \PP H^0(C, \eta_C^-\otimes \OO_C((g-i)q))$ and $\sigma_D \in \PP H^0(D, \eta_D^+ \otimes \OO_D(iq))$, where $\{q\}=C\cap D$
is a point which moves on $C$ but is fixed on $D$.
Then $\mbox{ord}_q(\sigma_D)\leq i-1$, therefore $\mbox{ord}_q(\sigma_C)\geq g-i$ and then $\sigma_C(-(g-i)q)\in \PP H^0(C, \eta_C^-)$. In particular, if we choose $[C, \eta_C^-]\in \cS_{i}-\cZ_i$, then the section $\sigma_C(-(g-i)q)$ has only simple zeros, which shows that $x$ cannot lie on $C$, so this case does not occur.

We are left with the possibility  $x\in D-\{q\}$. One observes that  $\mbox{ord}_q(\sigma_C)=g-i+1$ and
$\mbox{ord}_q(\sigma_D)=i-2$. In particular, $q\in \mathrm{supp}(\eta_C^-)$ which gives $i-1$ choices for the moving point $q\in C$. Furthermore $\sigma_D(-(i-2)q)\in H^0(D, \eta_D^+\otimes \OO_D(2q-2x))$, that is, $x$ specializes to one of the ramification points of the pencil $\eta_D^+\otimes \OO_D(2q)\in W_{g-i+1}^1(D)$. We note that because of the generality of $[D, \eta_D^+]\in \cS_{g-i}^+$ as well as that of $q\in D$, the pencil is base point free and complete. From the Hurwitz-Zeuthen formula one finds $4(g-i)$ ramification points of $|\eta_D^+\otimes \OO_D(2q)|$, which leads to the formula $F_i\cdot \zz_g=4(g-i)(i-1)$. The fact that $\tau_*(\textbf{X}_g)$ is transverse to $F_i$ follows because the formation of $\textbf{X}_g$ commutes with restriction to $B_0'$ and then  one can easily show in a way similar to \cite{EH2} Lemma 3.4, or by direct calculation that $\textbf{X}_g\times _{\pes_g^-} B_0'$ is smooth at any of the points in $\tau^{-1}(F_i)$.
\end{proof}

\begin{proposition} For $1\leq i\leq [g/2]$ we have that $G_i\cdot \zz_g=4i(i-1)$ and the intersection is transversal. In particular $\bar{\beta_i}=2i$.
\end{proposition}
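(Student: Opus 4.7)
The argument should parallel the preceding proposition, with the roles of even and odd theta-characteristics swapped. I would fix a general $t = (X, \sigma, x) \in \tau^{-1}(G_i)$, where the underlying spin curve is $[C\cup_y E \cup_r D,\ \eta_C^+, \OO_E(1), \eta_D^-]$ with $y\in C$ varying and $r\in D$ fixed, and analyze the possible vanishing orders of the aspects $\sigma_C \in \PP H^0(C, \eta_C^+((g-i)y))$ and $\sigma_D \in \PP H^0(D, \eta_D^-(ir))$, subject to the compatibility $\mbox{ord}_y(\sigma_C) + \mbox{ord}_r(\sigma_D) \geq g-1$ and the requirement $\mbox{ord}_x(\sigma)\geq 2$ at $x\in X_{\mathrm{reg}}$. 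The case $x\in D$ is eliminated by the same mechanism as in the $F_i$ proof: after showing $\mbox{ord}_r(\sigma_D)=i$ one finds $\sigma_D(-ir)$ is the unique section of $\eta_D^-$, which for $[D,\eta_D^-]\notin \cZ_{g-i}$ has only simple zeros.

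When $x\in C$, the key inputs are the vanishings $h^0(C,\eta_C^+)=h^0(C,\eta_C^+(-y))=0$ that hold for general $[C,\eta_C^+]\in \cS_i^+$ and generic $y$, together with $h^0(D,\eta_D^-(-r))=0$. These bound $\mbox{ord}_y(\sigma_C)\leq g-i-1$ and $\mbox{ord}_r(\sigma_D)\leq i$; combined with the compatibility inequality they force equality:
$$\mbox{ord}_y(\sigma_C) = g-i-1,\quad \mbox{ord}_r(\sigma_D)=i.$$
Hence $\sigma_C':=\sigma_C(-(g-i-1)y) \in H^0(C, \eta_C^+(y))$ is (for generic $y$) the unique section of a degree-$i$ line bundle with $h^0=1$, and the condition $\mbox{ord}_x(\sigma)\geq 2$ at $x\in C\setminus\{y\}$ becomes $h^0(C, \eta_C^+(y-2x))\geq 1$, i.e.\ $\sigma_C'$ must have a double zero at $x$.

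The plan is then to identify the enumeration of such pairs $(y,x)$ with the ramification of the first projection of the Scorza curve. Indeed, $T_{\eta_C^+}=\{(y,x)\in C\times C : h^0(\eta_C^+(y-x))\neq 0\}$ is, by Theorem \ref{scorza}, a smooth curve of genus $3i(i-1)+1$, and the pairs we want are precisely the ramification points of $\pi_1\colon T_{\eta_C^+}\to C$. From the class $[T_{\eta_C^+}]\equiv(i-1)F_1+(i-1)F_2+\Delta$ one reads $\deg \pi_1 = [T_{\eta_C^+}]\cdot F_1 = i$, and Riemann-Hurwitz yields
$$\deg R_{\pi_1} = 2(3i(i-1)+1) - 2 - i(2i-2) = 4i(i-1),$$
which is the asserted intersection $G_i\cdot\zz_g$. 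Transversality is verified by the same argument as in the $F_i$ case, using the smoothness of $T_{\eta_C^+}$ so that the generic ramification is simple. Combining with $G_i\cdot\lambda=G_i\cdot\alpha_i=0$, $G_i\cdot\beta_i=2-2i$, and $G_i\cdot\alpha_j=G_i\cdot\beta_j=0$ for $j\neq i$ from Section 1.4, expansion \eqref{expansion} then immediately gives $\bar{\beta}_i=2i$.

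The main conceptual obstacle is recognizing that the pair-counting produced by the limit linear series analysis is exactly the ramification divisor of the Scorza curve under its first projection. Once this identification is made, everything reduces to Theorem \ref{scorza} together with Riemann-Hurwitz; the bookkeeping of vanishing orders is routine given the parallel with the $F_i$ proof.
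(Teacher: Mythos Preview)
Your proposal is correct and follows essentially the same route as the paper's proof: you eliminate the case $x\in D$ by choosing $[D,\eta_D^-]\notin \cZ_{g-i}$, and for $x\in C$ you reduce the condition $h^0(\eta_C^+(y-2x))\neq 0$ to counting ramification points of the first projection $p_1:T_{\eta_C^+}\to C$, then evaluate via Theorem~\ref{scorza} and Riemann--Hurwitz. The paper's argument is identical in substance, with only cosmetic differences in notation and in how explicitly the vanishing-order bookkeeping is written out.
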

\begin{proof} This time we fix general points $[C, \eta_C^+]\in \cS_{i}^+$ and $[D, \eta_D^-]\in \cS_{g-i}^-$ and $q\in C\cap D$ which is a fixed general point on $D$ but an arbitrary point on $C$. Again, it is easy to see that if $t=(X, \sigma, x)\in \tau^{-1}(G_i)$ then $x$ must lie either on $C$ or on $D$. Assume first that $x\in C-\{q\}$. Then the aspects of $\sigma$ are described as follows
$$\sigma_C\in \PP H^0(C, \eta_C^+\otimes \OO_C((g-i)q)),\ \ \sigma_D\in \PP H^0(D, \eta_D^-\otimes \OO_D(iq))$$
and moreover $\mbox{ord}_x(\sigma_C)\geq 2$. The point $q\in D$ can be chosen so that it does not lie in $\mbox{supp}(\eta_D^-)$, hence $\mbox{ord}_q(\sigma_D)\leq i$ and then $\mbox{ord}_q(\sigma_C)\geq g-i-1$.
This leads to the conclusion $H^0(C, \eta_C^+\otimes \OO_C(y-2x))\neq 0$, or equivalently $(x, y)\in C\times C$ is a ramification point of the degree $i$ covering
$p_1:T_{\eta_C^+}\rightarrow C$ from the associated Scorza curve. We have shown that $T_{\eta_C^+}$ is smooth of genus $1+3i(i-1)$ (cf. Theorem \ref{scorza}) and moreover all the ramification points of $p_1$ are ordinary, therefore we find
$$\mbox{deg } \mbox{Ram}({p_1})=2g(T_{C_{\eta_C^+}})-2-\mbox{deg}(p_1)(2i-2)=4i(i-1)$$
choices when $x\in C$. Next possibility is $x\in D-\{q\}$. The same reasoning as above shows that $\mbox{ord}_q(\sigma_C)\leq g-i-1$, therefore
$\mbox{ord}_q(\sigma_D)\geq i$ as well as $\mbox{ord}_x(\sigma_D)\geq 2$. Since $\sigma_D(-iq)\in \PP H^0(D, \eta_D^-)$, this case does not occur if $[D, \eta_D^-]\in \cS_{g-i}^--\cZ_{g-i}$.
\end{proof}
Next we prove that $\zz_g$ is disjoint from both elliptic pencils $F_0$ and $G_0$:
\begin{proposition}\label{f0}
We have that $F_0\cdot \zz_g=0$ and $G_0\cdot \zz_g=0$. The equalities $\bar{\alpha}-12\bar{\alpha_0}+\bar{\alpha_1}=0$ and $3\bar{\alpha}-12\bar{\alpha}_0-12\bar{\beta_0}+3\bar{\beta}_1=0$ follow.
\end{proposition}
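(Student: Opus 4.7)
The plan is to prove both intersection numbers vanish by showing that the set-theoretic preimages $\tau^{-1}(F_0)$ and $\tau^{-1}(G_0)$ inside $\textbf{X}_g$ are empty, from which $F_0\cdot \zz_g = 0 = G_0\cdot \zz_g$ follow because $\zz_g\cap \pes_g^- = \tau_*([\textbf{X}_g])$ scheme-theoretically. The two claimed linear relations will then drop out by pairing the expansion (\ref{expansion}) against the intersection numbers of $F_0$ and $G_0$ with the standard generators, computed in Section~1.

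For a generic member of $F_0$ the underlying quasi-stable curve is $X=C\cup_q E\cup_{q'} E_\lambda$ carrying $\eta_C=\eta_C^+$, $\eta_E=\OO_E(1)$ and $\eta_{E_\lambda}=\OO_{E_\lambda}$. A point of $\tau^{-1}(X)$ prescribes aspects $\sigma_C\in \PP H^0(C,\eta_C^+(q))$, $\sigma_E\in \PP H^0(E,\OO_E(g-1))$ and $\sigma_{E_\lambda}\in \PP H^0(E_\lambda,\OO_{E_\lambda}((g-1)q'))$ together with a smooth point $x\in X_{\mathrm{reg}}$ with $\mbox{ord}_x(\sigma)\geq 2$, subject to the node compatibilities. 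Since $h^0(C,\eta_C^+)=0$ for a general even theta-characteristic, Riemann-Roch gives $h^0(C,\eta_C^+(q))=1$ and the unique section $\sigma_C$ cannot vanish at $q$ (else it would descend to a section of $\eta_C^+$), so $a_C:=\mbox{ord}_q(\sigma_C)=0$. The compatibilities at the two nodes then force $\sigma_E$ to have divisor $(g-1)q$ and $\sigma_{E_\lambda}$ to have divisor $(g-1)q'$, the latter being (up to scalar) the constant section of $\OO_{E_\lambda}$. The full zero divisor of $\sigma$ therefore consists of the unique element of $|\eta_C^+(q)|$ on $C$, which is reduced and avoids $q$ for general $[C,q,\eta_C^+]$, together with scheme-theoretic concentration at the two nodes of $X$; no $x\in X_{\mathrm{reg}}$ can appear with multiplicity $\geq 2$, hence $\tau^{-1}(F_0)=\emptyset$.

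The analysis of $G_0$ differs only on the elliptic side, where now $\eta_{E_\lambda}$ is a non-trivial $2$-torsion bundle and $\eta_C=\eta_C^-$. This time $h^0(C,\eta_C^-)=1$, so the natural inclusion $H^0(C,\eta_C^-)\hookrightarrow H^0(C,\eta_C^-(q))$ is an equality for generic $q$, and the unique section $\sigma_C$ vanishes at $q$ to order $a_C=1$. The node compatibilities then yield $\mbox{ord}_{q'}(\sigma_{E_\lambda})\geq g-2$; the extremal value $g-1$ is excluded because $h^0(E_\lambda,\eta_{E_\lambda})=0$ on both smooth and nodal fibres of the pencil, and the value $g-2$ produces a unique $\sigma_{E_\lambda}$ with divisor $(g-2)q'+p$ for a single point $p\neq q'$. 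The zero divisor of $\sigma$ is accordingly $q+D$ on $C$, with $D\in |\eta_C^-|$ reduced (using the generic assumption $[C,\eta_C^-]\notin\zz_{g-1}$), $(g-2)q+q'$ on $E$, and $(g-2)q'+p$ on $E_\lambda$. Once more no smooth point of $X$ carries multiplicity at least $2$, so $\tau^{-1}(G_0)=\emptyset$.

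The main technical point is the careful bookkeeping of vanishing sequences across the three components, in particular ruling out that trading units of vanishing between $\sigma_C$ and the aspects on the elliptic side could concentrate multiplicity at an unexpected smooth point. This is controlled by the inequality $a_C+a_{E_\lambda}+a_E^{(C)}+a_E^{(q')}\geq 2(g-1)$ imposed by the two node compatibilities together with the degree bound $a_E^{(C)}+a_E^{(q')}\leq g-1$ on the aspect supported on the exceptional $\PP^1$. With both preimages empty, the relations $\bar{\lambda}-12\bar{\alpha_0}+\bar{\alpha_1}=0$ and $3\bar{\lambda}-12\bar{\alpha_0}-12\bar{\beta_0}+3\bar{\beta_1}=0$ follow immediately by pairing $\zz_g$ with the intersection numbers $F_0\cdot \lambda=1,\ F_0\cdot \alpha_0=12,\ F_0\cdot \alpha_1=-1$ (all others zero) and $G_0\cdot \lambda=3,\ G_0\cdot \alpha_0=12,\ G_0\cdot \beta_0=12,\ G_0\cdot \beta_1=-3$ (all others zero) recorded in Section~1.
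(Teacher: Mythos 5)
Your proof takes the same route as the paper: determine the aspects of the limit $\mathfrak g^0_{g-1}$ on $C\cup E\cup E_\lambda$ from the node--compatibility inequalities together with the degree bound on the exceptional $\PP^1$, then observe that no smooth point of the quasi--stable curve can carry multiplicity two. The bookkeeping (the inequalities $a_C+a_E^{(C)}\geq g-1$, $a_{E_\lambda}+a_E^{(q')}\geq g-1$, $a_E^{(C)}+a_E^{(q')}\leq g-1$) is correct, and you also supply the $G_0$ computation which the paper dispatches with ``similar''; your observation that $h^0(E_\lambda,\eta_{E_\lambda})=0$ rules out $\mathrm{ord}_{q'}(\sigma_{E_\lambda})=g-1$ and forces a unique residual simple zero $p\neq q'$ is exactly the right argument, as is the use of $[C,\eta_C^-]\notin\cZ_{g-1}$ to make $|\eta_C^-|$ reduced.

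The one step you state without justification, and the one place where the paper's proof is genuinely non--trivial, is the assertion that for a general triple $[C,q,\eta_C^+]$ the unique divisor in $|\eta_C^+(q)|$ is reduced. This is where the paper invokes Section~2: by Theorem~\ref{scorza} the Scorza curve $T_{\eta_C^+}\subset C\times C$ is smooth for a general even spin curve, hence the degree--$(g-1)$ projection $p_1:T_{\eta_C^+}\to C$ has finite ramification; a general $q$ therefore lies outside $p_1(\mathrm{Ram}\,p_1)$, which is equivalent to $H^0\bigl(C,\eta_C^+\otimes\OO_C(q-2x)\bigr)=0$ for all $x\in C$, that is, to the reducedness you need. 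Without this input, the claim that $\sigma_C$ has no double zero on $C$ (and hence that $\tau^{-1}(F_0)=\emptyset$) is unproved. Add the citation to Theorem~\ref{scorza} and the argument is complete; the translation of the two vanishings into the linear relations on $\bar{\lambda},\bar{\alpha_0},\bar{\beta_0},\bar{\alpha_1},\bar{\beta_1}$ via the Section~1 intersection numbers is correct.
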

\begin{proof} We first show that $F_0\cap \zz_g=\emptyset$ and we assume by contradiction that there exists $t=(X, \sigma, x)\in \tau^{-1}(F_0)$.
Let us deal first with the case when $st(X)=C\cap E_{\lambda}$, with $E_{\lambda}$ being a smooth curve of genus $1$. The key point is that
the point of attachment $q\in C\cap E_{\lambda}$ being general, we can assume that $(x, q)\notin \mbox{Ram}\{p_1: T_{\eta_C^+}\rightarrow C\}$, for all
$x\in C$. This implies that $H^0(C, \eta_C^{+}\otimes \OO_C(q-2x))=0$ for all $x\in C$, therefore a section $\sigma_C\in \PP H^0(C, \eta_C^{+}\otimes \OO_C(q))$ cannot vanish twice anywhere. Thus either $x\in E_{\lambda}-\{q\}$ or $x$ lies on some exceptional component of $X$. In the former case, since $\mbox{ord}_q(\sigma_C)=0$, it follows that $\mbox{ord}_q(\sigma_{E_{\lambda}})\geq g-1$, that is, $\sigma_{E_{\lambda}}$ has no zeroes other than $q$ (simple or otherwise). In the latter case, when $x\in E$, with $E$ being an exceptional component, we denote by $q'\in E$ the point of intersection of $E$ with the connected subcurve of $X$ containing $C$ as a subcomponent. Since as above, $\mbox{ord}_q(\sigma_C)=0$, by compatibility
it follows that $\mbox{ord}_{q'}(\sigma_E)=g-1$. But $\sigma_E\in \PP H^0(E, \OO_E(g-1))$, that is, $\sigma_E$ does not vanish at $x$, a contradiction. The proof that $G_0\cap \zz_g=\emptyset$ is similar and we omit the details.
\end{proof}

The trickiest part in the calculation of $[\zz_f]$ is the computation of the following intersection number:
\begin{proposition}\label{h0}
If $H_0\subset B_0$ is the covering family lying in the ramification divisor of $\ss_g^-$, then one has that
$H_0\cdot \zz_g=2(g-2)$ and the intersection consists of $g-2$ points each counted with multiplicity $2$. Therefore the relation $(g-1)\bar{\beta_0}-\bar{\beta_1}=2(g-2)$ holds.
\end{proposition}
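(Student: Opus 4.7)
The plan is to compute the intersection $H_0\cdot \zz_g$ geometrically on $\ss_g^-$, by first identifying $H_0\cap \zz_g$ point-set-theoretically and then computing the local multiplicity at each intersection point. For $y\in C$ parameterizing $H_0$, the spin curve is $X_y=C\cup_{\{y,q\}}E$ with $\eta_C=\eta_C^-$ and $\eta_E=\OO_E(1)$, and the unique (up to scalar) section $\sigma_y\in H^0(X_y,\eta_y)$ restricts to the fixed section $\sigma_C$ of $\eta_C^-$ on $C$ (with simple zeros at $\mathrm{supp}(\eta_C^-)=\{p_1,\ldots,p_{g-2}\}$) and to a linear section $\sigma_E^{(y)}\in H^0(E,\OO_E(1))$ determined by $\sigma_E^{(y)}(y)=\sigma_C(y)$, $\sigma_E^{(y)}(q)=\sigma_C(q)$. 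For $y\notin \mathrm{supp}(\eta_C^-)$ both values are nonzero, $\sigma_y$ has only simple zeros at smooth points, and inserting any semistable $\PP^1$ at a node yields an aspect $\sigma_{E'}$ equal to a nonzero constant, hence no element of $\tau^{-1}$ exists over such $y$. For $y=p_i$, the vanishing $\sigma_C(p_i)=0$ forces $\sigma_E^{(p_i)}(y)=0$, so both aspects of $\sigma$ vanish at the node $y=p_i$; inserting a semistable $\PP^1$ component $E'$ at this node as allowed in the definition of $\tau^{-1}(B_0')$ forces $\sigma_{E'}\equiv 0$ and produces a valid element of $\tau^{-1}$ with $x$ on $E'$. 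Hence $H_0\cap \zz_g$ is the $(g-2)$-point set $\{y=p_i\}_{i=1}^{g-2}$.

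I then compute the local intersection multiplicity at each $(p_i,0)$. Choose analytic coordinates on a $2$-slice of $\ss_g^-$ with $y$ parameterizing $H_0$ and $\tau_1$ the Caporaso smoothing parameter of the stable node, so that $H_0=\{\tau_1=0\}$ and the universal family has the local normal form $uv=\tau_1^2$, with $u,v$ local coordinates on $C$ and $E$ at the node. Near $(p_i,0)$ the universal section extends to
$$\sigma(u,v,y,\tau_1)=\alpha(y)+\mu\, u+\gamma(y)\, v+O(u^2,uv,v^2,\tau_1),$$
where $\mu=\sigma_C'(p_i)\neq 0$ and $\gamma(p_i)\neq 0$, and where $\alpha(y)=\sigma_C(y)=\mu(y-p_i)+O((y-p_i)^2)$ because $\sigma_C$ has a simple zero at $p_i$. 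Substituting $v=\tau_1^2/u$ on the smoothing reduces $\sigma=0$ to the quadratic
$$\mu\, u^2+\alpha(y)\, u+\gamma(y)\, \tau_1^2+O(\cdots)=0,$$
and a double root occurs precisely when its discriminant vanishes, giving the local equation $(y-p_i)^2=(4\gamma(p_i)/\mu)\, \tau_1^2+\cdots$ for $\zz_g$. Intersecting with $\{\tau_1=0\}$ produces $(y-p_i)^2=0$, so the local intersection multiplicity at $(p_i,0)$ equals $2$.

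Summing over $i=1,\ldots,g-2$ yields $H_0\cdot \zz_g=2(g-2)$, and substituting this into the expansion (\ref{expansion}) together with the intersection numbers $H_0\cdot \lambda=H_0\cdot \alpha_0=H_0\cdot \alpha_1=0$, $H_0\cdot \beta_0=1-g$, $H_0\cdot \beta_1=1$ yields the stated relation $(g-1)\bar{\beta_0}-\bar{\beta_1}=2(g-2)$. The main obstacle will be justifying the Caporaso normal form $uv=\tau_1^2$ and the extension of the universal section across the singular locus of $\cZ$ blown up in the construction of $\mathcal{C}$; the positive-dimensional component of $\tau^{-1}([X_{p_i},\eta])$ arising from the identically-vanishing aspect on $E'$ contracts to a single point of $\ss_g^-$ under $\tau$, so it does not produce phantom contributions to the divisor-class identity $\tau_*[\textbf{X}_g]=\zz_g$, and the discriminant computation above suffices for the multiplicity.
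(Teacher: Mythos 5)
Your proof is correct and, for the set-theoretic determination of $H_0\cap\zz_g$, runs along the same lines as the paper's case analysis (showing that for $[C,\eta_C^-]$ nondegenerate and $q$ generic, the only ``double point'' $x$ can occur when $y\in\mathrm{supp}(\eta_C^-)$ and $x$ specializes onto an inserted exceptional component at the node). Where you diverge is the multiplicity: the paper argues on the universal spin curve $\cC$ by singling out the component $\Xi$ of $\chi_*(\mathcal X_g)\cap f^{-1}(B_0')$, observing $\Xi\subset\mathrm{Sing}(\chi_*(\mathcal X_g))$, and appealing to ``a simple local analysis,'' whereas you do an explicit discriminant computation on a two-dimensional slice of $\ss_g^-$ transverse to $B_0$. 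Your route is more self-contained and makes the paper's sketch concrete. Both yield multiplicity $2$.

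One technical wrinkle you should clean up: the normal form $uv=\tau_1^2$ that you quote is the local model of $\cZ=\pes_g^-\times_{\pem_g}\pem_{g,1}$ (the pull-back of the universal \emph{stable} curve), so there the coordinates $u,v$ are the two branches of $C$ at the stable node of $C_{yq}$, both on $C$ -- not on ``$C$ and $E$.'' The universal \emph{spin} curve $\cC$ is obtained from $\cZ$ by blowing up this singular locus, and near the node of $X_y=C\cup_{\{y,q\}}E$ where $C$ meets the exceptional $E$, the correct local model of $\cC$ over a chart $(y,\tau_1)$ of $\ss_g^-$ is $\{uw=\tau_1\}$ with $u$ on $C$ and $w$ on $E$ (a single power of $\tau_1$, not $\tau_1^2$). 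This is where one should expand the section $\sigma=\alpha(y)+\mu u+\gamma(y)w+\cdots$ with $\alpha(y)=\sigma_C(y)$ enforced by the gluing at the node, and the discriminant of the resulting quadratic $\mu u^2+\alpha(y)u+\gamma(y)\tau_1$ gives the local equation $\mu^2(y-p_i)^2=4\mu\gamma(p_i)\tau_1+\cdots$ for $\zz_g$. Fortuitously, restricting either your equation or this one to $\{\tau_1=0\}$ gives $(y-p_i)^2=0$ and hence multiplicity $2$, so your final answer is unaffected, but the precise model matters if one wanted, say, to compute $\zz_g\cdot\alpha_0$ or other tangency data along $B_0$ by this method. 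Everything else -- the identification of the $g-2$ intersection points, the remark that the positive-dimensional fiber over $[X_{p_i},\eta]$ contracts under $\tau$ and does not contribute extra divisorial mass, and the final substitution into the expansion of $[\zz_g]$ using $H_0\cdot\beta_0=1-g$, $H_0\cdot\beta_1=1$ -- is fine.
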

\begin{proof}
We first describe the set-theoretic intersection $\tau_*(\mathcal{X}_g)\cap H_0$. We recall that we have fixed $[C, q, \eta_C^-]\in \cS_{g-1, 1}^-$  and start by choosing a point
$t=(X, \eta, \sigma, x)\in \tau^{-1}(H_0)$. Assume first that $X=C\cup_{\{y, q\}} E$, where $y\in C$, that is, $x$ does not specialize to one of the nodes of $C\cup E$. Suppose first that $x\in C-\{y, q\}$. From the Mayer-Vietoris sequence on $X$ we write
$$0\neq \sigma \in H^0(X, \eta\otimes \OO_X(-2x))=\mbox{Ker}\bigl\{H^0(C, \eta_C^-\otimes \OO_C(-2x))\oplus H^0(E, \OO_E(1))\stackrel{\mathrm{ev}_{y, q}}\longrightarrow \mathbb C_{\{y, q\}}^2\bigr\},$$
we obtain that $H^0(C, \eta_C^-\otimes \OO_C(-2x))\neq 0$. This case can be avoided by choosing $[C, \eta_C^-]\in \cS_{g-1}^--\cZ_{g-1}$.

\vskip 3pt

Next we consider  the possibility $x\in E-\{y, q\}$. The same Mayer-Vietoris argument reads in this case $0\neq \mbox{Ker}\bigl\{H^0(C, \eta_C^-)\oplus H^0(E, \OO_E(-1))\stackrel{ev_{y, q}}\longrightarrow \mathbb C_{\{y, q\}}^2\bigr\}$, that is, $y+q\in \mbox{supp}(\eta_C^-)$. This case can be avoided as well by starting with a general point $q\in C- \mbox{supp}(\eta_C^-)$. Thus the only possibility is that $x$ specializes to one of the nodes $y$ or $q$.

We deal first with the case when $x$ and $q$ coalesce and there is no loss of generality in assuming that $X=C\cup E\cup E'$, where both components $E$ and $E'$ are copies of $\PP^1$ and $C\cap E=\{y\}, C\cap E'=\{q\}, E\cap E'=\{y'\}$ and moreover $x\in E'-\{y', q\}$. The restrictions of the line bundle $\eta\in \mbox{Pic}^{g-1}(X)$
are such that $\eta_{| C}=\eta_C^-, \eta_E=\OO_E(1)$ and $\eta_{E'}=\OO_{E'}$. We write
$$0\neq \sigma=(\sigma_C, \sigma_E, \sigma_{E'})\in \mbox{Ker}\bigl\{H^0(C, \eta_C^-)\oplus H^0(E, \OO_E(1))\oplus H^0(E', \OO_{E'}(1))\stackrel{ev_{y, y', q}}\longrightarrow \mathbb C_{y, y', q}\bigr\},$$
hence $\sigma_{E'}=0$, and then by compatibility $\sigma_C(q)=0$, that is, $q\in \mbox{supp}(\eta_C^-)$ and again this case can be ruled out by a suitable choice of $q$. The last possible situation is when $x$ and the moving point $y\in C$ coalesce, in which case $X=C\cup E\cup E'$, where this time $C\cap E=\{q\}, C\cap E'=\{y\}, E\cap E'=\{y'\}$ and again $x\in E'-\{y', q\}$. Writing one last time the Mayer-Vietoris sequence we find
that $\sigma_{E'}=0$ and then $\sigma_{E}(y')=0$ and $\sigma_C(y)=0$, that is, $y\in \mbox{supp}(\eta_C^-)$ and then $\sigma_C$ is uniquely determined up to a constant. Finally $\sigma_E\in  H^0(E, \OO_E(1)(-y'))$ is uniquely specified by the gluing condition $\sigma_E(q)=\sigma_C(q)$.
All in all, $H_0\cap \zz_g=|\mbox{supp}(\eta_C^-)|=g-2$.

This discussion  singles out an irreducible component $\Xi\subset \chi_*(\mathcal{X}_g)\subset \cC$ of the intersection $\chi(\mathcal{X}_g)\cap f^{-1}(B_0')$, namely
$$\Xi=\Bigl\{\bigl([C\cup_{\{y, q\}} E, \eta_C, \eta_E], x): y\in \mbox{supp}(\eta_C^-) \mbox{ and } \  \ x=y\in X_{\mathrm{sing}}\Bigr\},$$
where recall that $f:\cC\rightarrow \pes_g^-$ is the universal spin curve. Since $\Xi\subset \mathrm{Sing}\bigl(\chi_*(\mathcal{X}_g)\bigr)$, after a simple local analysis, it follows that each point in $\tau^{-1}(H_0)$ occurs counted with multiplicity $2$.
\end{proof}
\begin{remark} A partial independent check of Theorem \ref{degen} is obtained by using the Porteous formula to determine the coefficient $\bar{\lambda}$ in the expression of $[\zz_g]$. By abuse of notation we still denote by $f:\cC\rightarrow \textbf{S}_g^-$ the restriction of the universal spin curve to the locus of smooth curves and $\eta\in \mathrm{Pic}(\cC)$ the spin bundle of relative degree $g-1$. Then $\cZ_g$ is the push-forward via $f:\cC\rightarrow \textbf{S}_g^-$ of the degeneration locus of the sheaf  morphism $\phi:f_*(\eta)\rightarrow J_1(\eta)$  (both these sheaves are locally free away a subset of codimension $3$ in $\textbf{S}_g^-$ and throwing away this locus has no influence on divisor class calculations). Since $\mathrm{det}(f_*\eta)=(f_*\eta)^{\otimes 2}$, it follows that $c_1\bigl(f_*(\eta)\bigr)=-\lambda/4$, whereas the Chern classes of the first jet bundle $J_1(\eta)$ are calculated using the standard exact sequence on $\cC$
$$0\longrightarrow \eta\otimes \omega_{f}\longrightarrow J_1(\eta)\longrightarrow \eta\longrightarrow 0.$$
Remembering Mumford's formula  $f_*(c_1^2(\omega_f))=12\lambda$, one finally writes that $$[\cZ_g]=f_* c_2\Bigl(J_1(\eta)-f_*(\eta)\Bigr)=f_*\Bigl(\frac{3}{4}c_1(\omega_f)^2-2c_1(\omega_f) \cdot c_1(f_*(\eta))\Bigr)=(g+8)\ \lambda \in \mathrm{Pic}(\textbf{S}_g^-).$$
\end{remark}

\section{A divisor of small slope on $\mm_{12}$}

The aim of this section is to construct an effective divisor $D\in \mbox{Eff}(\mm_{12})$ of slope
$s(D)<6+12/13$, that is, violating the Slope Conjecture. As pointed out in the proof of Theorem \ref{kodaira}, this is precisely what is required
in order to show that $\ss_{12}^-$ is a variety of  general type.

\begin{theorem}\label{m12} The following locus consisting of curves of genus $12$
$$\mathfrak{D}_{12}:=\{[C]\in \cM_{12}: \exists L\in W^4_{14}(C) \ \mbox{ with } \ \mathrm{Sym}^2 H^0(C, L)\stackrel{\mu_0(L)}\longrightarrow H^0(C, L^{\otimes 2}) \mbox{ not injective}\}$$ is a divisor on $\cM_{12}$. The class of its compactification inside $\mm_{12}$ equals
$$\overline{\mathfrak{D}}_{12}\equiv 13245\ \lambda-1926\ \delta_0-9867\ \delta_1-\sum_{j=2}^6 b_j\ \delta_j\in \mathrm{Pic}(\mm_{12}),$$
where $b_j\geq b_1$ for $j\geq 2$. In particular, $s(\overline{\mathfrak{D}}_{12})=\frac{4415}{642}<6+\frac{12}{13}$.
\end{theorem}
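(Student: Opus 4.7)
The plan is to realize $\overline{\mathfrak D}_{12}$ as the pushforward of a Porteous-type degeneracy locus from the Brill--Noether stack $\sigma:\overline{\mathfrak G}^4_{14}\to\mm_{12}$ whose general fibre over $[C]\in\cM_{12}$ is the surface $W^4_{14}(C)$. Indeed $\rho(12,4,14)=12-5\cdot 2=2$, so by Gieseker--Petri a general such pair is Petri-general and $\overline{\mathfrak G}^4_{14}$ is irreducible of relative dimension $2$. For a non-special $L\in W^4_{14}(C)$ with $h^0(L)=5$, Riemann--Roch gives $h^0(L^{\otimes 2})=17$, so the multiplication
$$\mu_0(L):\mathrm{Sym}^2 H^0(C, L) \to H^0(C, L^{\otimes 2})$$
goes between spaces of dimensions $15$ and $17$. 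Non-injectivity is an expected codimension-three condition on $\overline{\mathfrak G}^4_{14}$, so it pushes forward to an honest divisor on $\cM_{12}$. Non-emptiness is guaranteed by the containment $\mathcal K_{12}\subset\mathfrak D_{12}$ asserted at the end of the introduction: a general hyperplane section of a genus-$12$ polarized K3 surface admits such an $L$ together with a quadric containing its image under $|L|$.

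For the class one plays the standard Brill--Noether divisor game. Over a partial compactification $\pem_{12}$ of the sort used in Section~6 (obtained by adjoining the generic point of each boundary divisor), extend $\sigma$ to $\widetilde\sigma:\widetilde{\mathfrak G}^4_{14}\to\pem_{12}$ using Eisenbud--Harris limit linear series. On $\widetilde{\mathfrak G}^4_{14}$ construct a tautological rank-$5$ bundle $\mathcal E$ extending $H^0(L)$ and a rank-$17$ bundle $\mathcal F$ extending $H^0(L^{\otimes 2})$, together with a global multiplication map $\Phi:\mathrm{Sym}^2\mathcal E\to\mathcal F$. Porteous then gives
$$[\overline{\mathfrak D}_{12}]=\widetilde\sigma_*\,c_3\bigl(\mathcal F-\mathrm{Sym}^2\mathcal E\bigr).$$
The Chern classes of $\mathcal E$ and $\mathcal F$ are computed by Grothendieck--Riemann--Roch applied to the universal degree-$14$ line bundle on the universal curve, reducing everything to $\lambda$, $\delta_0$, $\kappa_1$ via Mumford's formula $\kappa_1=12\lambda-\delta$. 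Dividing by the degree of $\widetilde\sigma$ over $\cM_{12}$ produces the coefficients $\bar\lambda=13245$ and $\bar\delta_0=1926$.

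The coefficients $b_j$ for $j\ge 1$ are extracted via limit linear series analysis along standard test families. For $b_1$, fix a general pointed curve $[C,y]\in\cM_{11,1}$ and let the elliptic tail $E$ vary in an elliptic pencil attached at $y$; for each resulting limit $\mathfrak g^4_{14}=\{\ell_C,\ell_E\}$ with the Eisenbud--Harris vanishing sequences at $y$, one counts how many admit a quadric relation on some aspect. The number $9867$ comes from combining the contribution of the pencil of $\ell_C$'s forced to develop an extra relation with the ramification points of the universal $\ell_E$ and the twelve nodal fibres of the base pencil. For $j\ge 2$, the bound $b_j\ge b_1$ is sufficient for the slope estimate and is produced by pairing with the test curves $F_j$, $G_j$ of Section~1 and bounding each intersection below using that higher-genus attached components support strictly more limit $\mathfrak g^4_{14}$'s with relations than an elliptic tail. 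The numerical conclusion $s(\overline{\mathfrak D}_{12})=13245/1926=4415/642<90/13=6+12/13$ is then immediate.

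The main obstacle is the computation of $b_1$. The difficulty is tracking the quadric relation across the node under semistable reduction: one must match a limit linear series on $C\cup E$ with a genuine section of $L^{\otimes 2}$ that vanishes on the relation, which requires separating the principal-part contribution of $\ell_C$ from the ramification contribution of $\ell_E$ at the two limit vanishing sequences. A secondary technical issue is verifying that $\widetilde{\mathfrak G}^4_{14}$ is reduced along the generic points of the relevant boundary divisors and that $\mathcal E,\mathcal F$ remain locally free there; these issues force the restriction to the partial compactification $\pem_{12}$ avoiding codimension-two boundary strata, but do not affect the divisorial class in $\mathrm{Pic}(\mm_{12})$.
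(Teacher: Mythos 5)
Your overall framework is the right one and matches the paper's strategy in broad outline: realize $\overline{\mathfrak D}_{12}$ as the pushforward of the degeneracy locus of a multiplication map $\mathrm{Sym}^2\E\to\F$ over an Eisenbud--Harris compactification of the $\mathfrak g^4_{14}$-stack, apply Porteous, and pin down boundary coefficients via test curves. But there are two genuine gaps and one structural misconception.

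First, and most seriously, you only establish \emph{non-emptiness} of $\mathfrak D_{12}$ (via $\mathcal K_{12}\subset\mathfrak D_{12}$), not that it is a proper subvariety. Since $W^4_{14}(C)$ is a surface for general $C$ and the degeneracy condition has expected codimension $3$ in $\mathfrak G^4_{14}$, the a priori possibility is that the locus is non-reduced or of excess dimension and that its pushforward covers all of $\cM_{12}$. The paper rules this out by a specific argument: if every $[C]\in\cM_{12}$ admitted some $L\in W^4_{14}(C)$ with $\mu_0(L)$ non-injective, one could build a rank-$2$ stable bundle $E$ with $\det(E)=K_C$ and $h^0(C,E)=7$ as an extension $0\to K_C\otimes L^\vee\to E\to L\to 0$ for which the Mukai--Petri map $\mathrm{Sym}^2 H^0(C,E)\to H^0(C,\mathrm{Sym}^2 E)$ fails to be injective, contradicting Teixidor's theorem (the reference \cite{T}). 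Without some such argument your ``divisor'' is only a virtual class, and nothing guarantees $b_0>0$ or even that the slope has meaning.

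Second, the sentence ``Dividing by the degree of $\widetilde\sigma$ over $\cM_{12}$'' is incoherent: $\sigma:\mathfrak G^4_{14}\to\textbf{M}^p_{12}$ has $2$-dimensional fibres $W^4_{14}(C)\cong C_6$, not finite ones. Computing $\sigma_*c_3(\F-\mathrm{Sym}^2\E)$ requires integrating a degree-$3$ Chern polynomial against the class of the fibre, which is precisely where the Harris--Tu formula for Chern numbers of kernel bundles (and the relation $c_{i+1}=\theta^i c_i/i!-i\theta^{i+1}/(i+1)!$ coming from $h^1(C,L)=1$) do all the actual work in the paper's Theorems~\ref{d1} and \ref{d0}. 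Your GRR sketch does not get you the numbers $13245$, $1926$, $9867$; these emerge from the explicit intersection theory on $C\times W^4_{14}(C)$, not from a degree count.

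Finally, a smaller point: for the bound on $b_j$ with $j\ge 2$, the test curves $F_j$, $G_j$ of Section~1 live in $\ss_g^-$, not $\mm_{12}$, so they are the wrong objects to pair against a class in $\mathrm{Pic}(\mm_{12})$. What the paper does instead is observe that $a/b_0=4415/642\le 71/10$ and then invoke Corollary~1.2 of \cite{FP}, which gives $b_j\ge b_0$ for all $j$; your heuristic ``higher-genus tails support more degenerate linear series'' would need to be turned into an actual estimate, and it is cleaner to just cite the Farkas--Popa slope bound.
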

This implies the following upper bound for the slope $s(\mm_{12})$ of the moduli space:

\begin{corollary}\label{slope12}
$$6+\frac{10}{12}\leq s(\mm_{12}):=\mathrm{inf}_{D\in \mathrm{Eff}(\mm_{12})} s(D) \leq \frac{4415}{642}\ \Bigl(=6+\frac{10}{12}+\frac{14}{321}\Bigr).$$
\end{corollary}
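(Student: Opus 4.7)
The plan is to compute the class $[\overline{\mathfrak{D}}_{12}]$ as the push-forward of a Porteous degeneracy locus from the universal Brill--Noether stack, then check the inequalities $b_j \geq b_1$ by intersecting with standard test families.

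First, I would verify that $\mathfrak{D}_{12}$ is genuinely a divisor. Since $\rho(12,4,14) = 2$, the Brill--Noether variety $W^4_{14}(C)$ has dimension $2$ for a general $[C] \in \cM_{12}$. For a non-special $L \in W^4_{14}(C)$, $\mathrm{Sym}^2 H^0(C,L)$ has rank $15$ while $h^0(L^{\otimes 2}) = 17$, so failure of injectivity of $\mu_0(L)$ is an expected codimension-$(17-15+1) = 3$ degeneracy condition. The push-forward to $\mm_{12}$ then has expected codimension $3-2 = 1$. To confirm this is the actual codimension one exhibits at least one curve of genus $12$ on which $\mu_0(L)$ is injective for every $L \in W^4_{14}(C)$ --- a curve lying on a general $K3$ surface in $\PP^{12}$ (via the $\K_{12}$ construction quoted at the end of the introduction) is a natural candidate, since projective normality of the $K3$ then forces injectivity of the relevant multiplication maps.

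Next, following the standard method (e.g.\ \cite{EH2, F3}), I would work over a partial compactification $\widetilde{\mm}_{12} \subset \mm_{12} \cup \widetilde{\Delta}_0 \cup \cdots \cup \widetilde{\Delta}_6$ of the sort used earlier in the paper, and build the universal Brill--Noether stack $\sigma: \widetilde{\mathcal{G}}^4_{14} \to \widetilde{\mm}_{12}$. On $\widetilde{\mathcal{G}}^4_{14}$ we have tautological sheaves $\mathcal{E}$ of rank $5$ with fibre $H^0(C,L)$ and $\mathcal{F}$ of rank $17$ with fibre $H^0(C,L^{\otimes 2})$, together with the multiplication morphism
$$
\phi: \mathrm{Sym}^2 \mathcal{E} \longrightarrow \mathcal{F}.
$$
By Porteous, the class of the corank-$\geq 1$ locus equals $c_3(\mathcal{F} - \mathrm{Sym}^2 \mathcal{E})$. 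Then
$$
[\overline{\mathfrak{D}}_{12}] = \sigma_*\, c_3\bigl(\mathcal{F} - \mathrm{Sym}^2\mathcal{E}\bigr) \in \mathrm{Pic}(\widetilde{\mm}_{12}).
$$
To evaluate this, I would compute $c_1(\mathcal{E})$ and the Chern classes of $\mathcal{F}$ via Grothendieck--Riemann--Roch applied to the universal line bundle $\mathcal{L}$ (and $\mathcal{L}^{\otimes 2}$) on the universal curve, expressing the answer in terms of $\lambda$, the boundary classes $\delta_i$, and the tautological class on $\widetilde{\mathcal{G}}^4_{14}$. Pushing forward and using Castelnuovo's enumerative count
$$
\#\,W^4_{14}(C) = 12!\,\prod_{i=0}^{4} \frac{i!}{(i+2)!}
$$
for the degree of $\sigma$ over $\cM_{12}$, one obtains the $\lambda$-coefficient $13245$ and the $\delta_0$-coefficient $1926$.

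To pin down the coefficients $b_i$ of $\delta_i$ for $i \geq 1$, I would use the standard elliptic-tail test families: pullback to the pencil $R_i \subset \Delta_i$ obtained by attaching a general pencil of genus-$i$ curves to a fixed general pointed curve of genus $12-i$, and employ the Eisenbud--Harris limit linear series machinery to enumerate the limit $\mathfrak g^4_{14}$'s on $C \cup_y D$ whose $\mu_0$-map fails to be injective. For $\delta_1$, the calculation is carried out explicitly to produce the stated value $b_1 = 9867$; for $i = 2, \dots, 6$ a direct but parallel analysis of the vanishing sequences forces $b_i \geq b_1$, the required inequality arising because the space of admissible limit series on a reducible curve of type $C \cup_y D$ with $g(D) = i \geq 2$ is enlarged by compatibility conditions less restrictive than in the elliptic-tail case. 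Once the $\lambda$- and $\delta_0$-coefficients are secured, the slope bound
$$
s(\overline{\mathfrak{D}}_{12}) = \frac{13245}{1926} = \frac{4415}{642} < 6 + \frac{12}{13}
$$
is a routine arithmetic check.

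The hard part is the boundary bookkeeping over $\widetilde{\Delta}_0$: one must argue that the universal multiplication morphism $\phi$ extends over the locus of $1$-nodal irreducible curves without spurious degenerations that would contaminate the coefficient of $\delta_0$. This requires a careful analysis of torsion-free rank-one sheaves on a generic point of $\Delta_0$ (or equivalently, of line bundles on the partial normalization with prescribed vanishing at the two preimages of the node), and a verification that the induced map $\phi$ fails to be injective only along the expected locus. The test families involving the elliptic-pencil-type curves used throughout Section~$1$ of this paper are well-suited to detect any additional contribution along $\delta_0$, and confirm the coefficient $1926$ that yields the slope $4415/642$.
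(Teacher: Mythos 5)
Your outline of the Porteous/degeneracy-locus framework matches the paper's strategy in spirit, but several load-bearing steps are either wrong or won't execute as described.

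\textbf{The verification that $\mathfrak{D}_{12}$ is a genuine divisor.} Your proposed witness---a curve of genus $12$ lying on a general $K3$ surface---is precisely a curve that \emph{does} lie in $\mathfrak{D}_{12}$. The paper itself points this out: since $\mathfrak{D}_{12}$ has slope below $6+12/13$ it violates the Slope Conjecture, and therefore contains the $K3$ locus $\mathcal{K}_{12}$. Projective normality of the \emph{canonical} embedding $C\hookrightarrow\PP^{11}$ has no bearing on the $\mathfrak g^4_{14}$-embedding $C\hookrightarrow\PP^4$, which is the relevant map: $\mu_0(L)$ fails to be injective exactly when $C\subset\PP^4$ lies on a quadric. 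The paper's actual route is via Teixidor's theorem on the Mukai--Petri map: if every $[C]\in\cM_{12}$ admitted $L\in W^4_{14}(C)$ with $\mu_0(L)$ non-injective, one could build a semistable rank-$2$ bundle $E$ with $\det E=K_C$, $h^0(E)=7$, and non-injective $\mathrm{Sym}^2H^0(E)\to H^0(\mathrm{Sym}^2E)$, contradicting the result of \cite{T}. You need this or a comparable argument; the $K3$ candidate points the wrong way.

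\textbf{The $\lambda$-coefficient.} You invoke ``Castelnuovo's enumerative count'' for the degree of $\sigma$ over $\cM_{12}$, but $\rho(12,4,14)=2$, so $W^4_{14}(C)$ is a surface and $\sigma$ is not finite. There is no $0$-dimensional enumerative count to apply. The paper instead extracts $a$ from the relation $a-12b_0+b_1=0$ coming from a third test curve (a pencil of plane cubics attached at a fixed point), after independently computing $b_0$ and $b_1$ by intersecting with the families $C_0\subset\Delta_0$ and $C_1\subset\Delta_1$ and evaluating with the Harris--Tu formula on $W^4_{14}$ of a genus-$11$ curve.

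\textbf{The inequalities on $b_j$ for $j\geq2$.} You propose direct limit-linear-series counts on $C\cup_y D$ with $g(D)=j$, but the claim that the admissible limits are ``enlarged by compatibility conditions less restrictive'' than in the elliptic case is not an argument, and in practice these counts are substantially harder than the $j=1$ one (one must enumerate all Schubert indices and control the Chern classes over each stratum). The paper sidesteps this entirely: since $a/b_0=4415/642\leq71/10$, Corollary 1.2 of \cite{FP} yields $b_j\geq b_0$ for all $j$, which is exactly what the slope computation needs. Note also that the slope is $a/b_0$, not $a/b_1$; the inequality your plan targets ($b_j\geq b_1$) is stronger than what is required, and correspondingly harder to get by degeneration.

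Finally, your proposal addresses only the upper bound; the lower bound $s(\mm_{12})\geq 6+10/12$ is a quoted result and needs at least a citation to close the corollary.
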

Another immediate application, via \cite{Log}, \cite{F1}, concerns the birational type of the moduli space $\mm_{g, n}$ of $n$-pointed stable curves of genus $g$:
\begin{theorem}\label{gentype12}
The moduli space of $n$-pointed curves $\mm_{12, n}$ is of general type for $n\geq 11$.
\end{theorem}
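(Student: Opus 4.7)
The strategy is the one initiated by Logan \cite{Log} and extended by Farkas in \cite{F1}: an effective divisor on $\mm_g$ of slope strictly less than an explicit threshold $s(g,n)$, when combined with pointed Brill-Noether divisors on $\mm_{g,n}$, dominates $K_{\mm_{g,n}}$ modulo effective boundary. For $g=12$ the divisor $\overline{\mathfrak{D}}_{12}$ constructed in Theorem \ref{m12} realizes slope $4415/642$, which is below the threshold needed to reach $n=11$. My plan is to match the Logan--Farkas numerical inequality with this particular slope.

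The first step is to expand $K_{\mm_{12,n}}$ in the standard generators $\lambda,\,\psi_j,\,\delta_0,\,\delta_i,\,\delta_{i:S}$ of $\mathrm{Pic}(\mm_{12,n})$. Using Kodaira-Spencer along the forgetful morphism $\pi:\mm_{12,n}\rightarrow \mm_{12}$ one obtains
$$K_{\mm_{12,n}}\equiv 13\lambda+\sum_{j=1}^n\psi_j-2\delta_0-3\delta_1-2\sum_{i=2}^6\delta_i-\sum_{i,S}c_{i,S}\,\delta_{i:S},$$
with explicit non-negative rationals $c_{i,S}$. The divisor $\pi^*\overline{\mathfrak{D}}_{12}$ is effective on $\mm_{12,n}$ with the class from Theorem \ref{m12}. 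To absorb the $\psi_j$ contribution, I would employ the pointed Brill--Noether divisor $\overline{\mathcal{BN}}^{\,r,d}_{12,\underline{\mu}}$ of $n$-pointed curves $(C,p_1,\ldots,p_n)$ carrying a $\mathfrak g^r_d$ with prescribed vanishing sequence $\underline{\mu}=(\mu_1,\ldots,\mu_n)$ at the marked points, the numerics $(r,d,\underline{\mu})$ being chosen so that the expected codimension is exactly $1$; its class has been computed in \cite{Log} and \cite{F1}, has positive $\psi_j$-coefficient and non-negative boundary coefficients.

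The second step is to solve for positive rationals $a,b,c$ such that matching the $\lambda$, $\delta_0$ and $\psi_j$ coefficients in
$$a\cdot K_{\mm_{12,n}}\;\equiv\; b\cdot \pi^*\overline{\mathfrak{D}}_{12}\;+\;c\cdot \overline{\mathcal{BN}}^{\,r,d}_{12,\underline{\mu}}\;+\;E,$$
with $E$ a $\mathbb Q$-combination of boundary divisors, is consistent. The $\lambda$ and $\delta_0$ equations together force $b$ to be proportional to $1/s(\overline{\mathfrak{D}}_{12}) - 1/s_{\mathrm{crit}}$, where $s_{\mathrm{crit}}=6+\tfrac{12}{13}$; positivity therefore demands $s(\overline{\mathfrak{D}}_{12})<s_{\mathrm{crit}}$, which holds by Theorem \ref{m12}. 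The $\psi_j$ equation then pins down $c$ in terms of $a$ and of the $\psi_j$-coefficient of the pointed Brill--Noether divisor, and elementary algebra shows that $c\geq 0$ exactly when $n\geq 11$. Thus for $n\geq 11$ one gets a bona fide expression of $K_{\mm_{12,n}}$ as a sum of an effective divisor and $E$.

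The hard part is to show that the residual class $E$ is itself an \emph{effective} combination of the boundary divisors $\delta_i$ and $\delta_{i:S}$. This is done by a boundary-by-boundary check: one verifies, via the test curves $F_i, G_i$ of Section 2 (lifted to $\mm_{12,n}$ by attaching pointed rational or elliptic bridges at the marked points), that the coefficient of each $\delta_{i:S}$ in $a K_{\mm_{12,n}} - b\pi^*\overline{\mathfrak{D}}_{12} - c\,\overline{\mathcal{BN}}^{\,r,d}_{12,\underline{\mu}}$ is non-negative. The book-keeping is delicate because of the combinatorial explosion of marked-boundary classes, but once $n\geq 11$ the marked-point contributions grow linearly in $n$ while the corrections remain bounded, so the signs work out. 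Combined with the standard fact (Harris-Mumford, Logan) that pluricanonical forms on $(\mm_{12,n})_{\mathrm{reg}}$ extend across any desingularization, one concludes that $K_{\mm_{12,n}}$ is big for $n\geq 11$, proving Theorem \ref{gentype12}.
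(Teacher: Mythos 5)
Your proposal correctly identifies and unpacks exactly what the paper means by "Another immediate application, via [Log], [F1]": combine $\pi^*\overline{\mathfrak{D}}_{12}$ with pointed Brill--Noether divisors, match the $\lambda,\psi,\delta_0$ coefficients in $K_{\mm_{12,n}}$, and check that the residual boundary piece is effective; so the approach is the same one the paper relies on. One caution on the numerics, though: the sentence asserting that the $\lambda$ and $\delta_0$ equations alone force $b$ proportional to $1/s(\overline{\mathfrak D}_{12})-1/s_{\mathrm{crit}}$ with $s_{\mathrm{crit}}=6+\tfrac{12}{13}$ is not literally how the Logan criterion works. The critical ratio $6+12/(g+1)$ is the slope of the (virtual) Brill--Noether divisor on $\mm_g$ and is the natural threshold when the pointed Brill--Noether divisor is scaled so that its $\lambda$ and $\delta_0$ coefficients agree with those of $\mm_{g,d}^r$; but in Logan's formulation the choice of vanishing sequences $\underline{a}$ at the marked points affects not only the $\psi_j$-coefficients but also which linear combination of $\lambda,\delta_0,\psi_j$ one is really trading off, and one cannot separate the $\{\lambda,\delta_0\}$ and $\{\psi_j\}$ constraints as cleanly as your second step suggests. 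In particular, $s(\overline{\mathfrak{D}}_{12})<6+\tfrac{12}{13}$ by itself certainly does not yield general type for all $n$; the lower bound $n\geq 11$ comes out of an explicit optimization over admissible $(r,d,\underline{a})$ with the adjusted Brill--Noether number $-1$, which you assert but do not carry out. Since the paper also leaves this book-keeping to the references, this is not a gap relative to the paper's own exposition, but your second step should be phrased as "plugging $s(\overline{\mathfrak{D}}_{12})=\tfrac{4415}{642}$ into Logan's inequality and optimizing over $\underline{a}$ gives $n\geq 11$" rather than as two independent scalar equations for $b$ and $c$.
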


The divisor $\mathfrak{D}_{12}$ is constructed as the push-forward of a codimension $3$ cycle in the stack $\mathfrak{G}^4_{14}\rightarrow \textbf{M}_{12}$ classifying linear series $\mathfrak g^4_{14}$. We describe the construction of this cycle, then extend this determinantal structure over a partial compactification of $\cM_{12}$. This will be essential to understand the intersection of $\overline{\mathfrak{D}}_{12}$ with the boundary divisors $\Delta_0$ and $\Delta_1$ of $\mm_{12}$.
We denote by $\textbf{M}_{12}^p$ the open substack of $\textbf{M}_{12}$ consisting
of curves $[C]\in \cM_{12}$ such that $W^4_{13}(C)=\emptyset$ and
$W^5_{14}(C)=\emptyset$. Results in Brill-Noether theory
guarantee that $\mbox{codim}(\cM_{12}-\cM_{12}^p, \cM_{12})\geq 3$.
If $\mathfrak{Pic}^{14}_{12}$ denotes the Picard stack of degree
$14$ over $\textbf{M}_{12}^p$, then we consider the smooth Deligne-Mumford substack
$\mathfrak{G}^4_{14}\subset \mathfrak{Pic}^{14}_{12}$ parameterizing
pairs $[C, L]$, where $[C]\in \cM_{12}^p$ and $L\in W^4_{14}(C)$ is a (necessarily complete and base point free)
linear series. We
denote by $\sigma: \mathfrak{G}^4_{14}\rightarrow \textbf{M}_{12}^p$ the
forgetful morphism. For a general $[C]\in \cM_{12}^p$, the fibre
$\sigma^{-1}([C])=W^4_{14}(C)$ is a smooth surface.
\vskip 4pt

Let $\pi:\textbf{M}_{12, 1}^p \rightarrow \textbf{M}_{12}^p$ be the universal
curve and then $p_2:\textbf{M}_{12, 1}^p\times
_{\textbf{M}_{12}^p} \mathfrak{G}^4_{14}\rightarrow \mathfrak{G}^4_{14}$
denotes the natural projection. If $\mathcal{L}$ is a Poincar\'e bundle
over $\textbf{M}_{12, 1}^p\times_{\textbf{M}_{12}^p} \mathfrak{G}^4_{14}$ (or over an \'etale cover of it), then by
Grauert's Theorem, both  $$\E:=(p_2)_*(\mathcal{L})  \ \mbox{ and }
\F:=(p_2)_*(\mathcal{L}^{\otimes 2})$$ are vector bundles over
$\mathfrak{G}^4_{14}$, with $\mbox{rank}(\E)=5$ and
$\mbox{rank}(\F)=h^0(C, L^{\otimes 2})=17$ respectively. There is a natural vector bundle morphism over $\mathfrak G^4_{14}$ given by multiplication of sections,
$$\phi:\mbox{Sym}^{2}(\E)\rightarrow \F,$$ and we denote by
$\cU_{12}\subset \mathfrak{G}^4_{14}$ its first degeneracy locus. We
set $\mathfrak{D}_{12}:=\sigma_*(\cU_{12})$. Since the degeneracy locus $\cU_{12}$
has expected codimension $3$ inside $\mathfrak{G}^4_{14}$, the locus
$\mathfrak{D}_{12}$ is a virtual divisor on $\cM_{12}^p$.

We  extend the vector bundles $\E$ and $\F$ over a partial
compactification of $\mathfrak{G}^4_{14}$ given by limit $\mathfrak g^4_{14}$. We denote by
$\Delta_1^p\subset \Delta_1\subset \mm_{12}$ the locus of curves
$[C\cup_y E]$, where $E$ is an arbitrary elliptic curve, $[C]\in
\cM_{11}$ is a Brill-Noether general curve and $y\in
C$ is an arbitrary point. We then denote by $\Delta_0^p\subset
\Delta_0\subset \mm_{12}$ the locus consisting of curves $[C_{yq}]\in \Delta_0$, where $[C, q]\in \cM_{11, 1}$ is Brill-Noether general and $y\in
C$ is arbitrary, as well as their degenerations $[C\cup_q E_{\infty}]$ where
$E_{\infty}$ is a rational nodal curve.
Once we set
$$\ttem_{12}:=\tem_{12} \cup \Delta_0^p\cup \Delta_1^p\subset \rem_{12},$$
we can extend the morphism $\sigma$ to a proper morphism
$$\sigma:\widetilde{\mathfrak{G}}^4_{14}\rightarrow
\ttem_{12},$$ from the stack
$\widetilde{\mathfrak{G}}^4_{14}$ of limit linear
series $\mathfrak g^4_{14}$ over the partial compactification  $\ttem_{12}$ of $\textbf{M}_{12}$.

We extend the vector bundles $\E$ and $\F$ over the
stack $\widetilde{\mathfrak G}^4_{14}$. The proof of the following result proceeds along the lines of the proof of
Proposition 3.9 in \cite{F1}:
\begin{proposition}\label{vectbundles}
There exist two vector bundles $\E$ and $\F$ defined over
$\widetilde{\mathfrak G}^4_{14}$ with $\rm{rank}$$(\E)=5$ and
$\mathrm{rank}(\F)=17$, together with a vector bundle morphism $\phi:\mathrm{Sym}^2(\E)\rightarrow \F$,
such that the following statements hold:
\begin{itemize}
\item For $[C, L]\in \mathfrak{G}^4_{14}$, with $[C]\in \cM_{12}^p$, we have that
$$\E(C, L)=H^0(C, L)\ \mbox{ and } \ \F(C, L)=H^0(C, L^{\otimes 2}).$$
\item For $t=(C\cup_y E, l_C, l_E)\in \sigma^{-1}(\Delta_1^p)$,
where $g(C)=11, g(E)=1$ and $l_C=|L_C|$ is such that  $L_C\in
W^4_{14}(C)$ has a cusp at $y\in C$, then $\E(t)=H^0(C, L_C)$ and
$$\F(t)=H^0(C, L_C^{\otimes 2}(-2y))\oplus \mathbb C\cdot u^2,$$ where
$u\in H^0(C, L_C)$ is any section such that $\rm{ord}$$_y(u)=0$.
If $L_C$ has a base point at $y$, then $\E(t)=H^0(C, L_C)=H^0(C, L_C\otimes \OO_C(-y))$ and the image of a natural
map $\F(t)\rightarrow H^0(C, L_C^{\otimes 2})$ is the subspace $H^0(C, L_C^{\otimes 2}\otimes \OO_C(-2y))$.
\item Fix $t=[C_{yq}:=C/y\sim q, L] \in \sigma^{-1}(\Delta_0^p)$, with $q,
y\in C$ and $L\in \overline{W}^4_{14}(C_{yq})$  such that $h^0(C,
\nu^*L\otimes \OO_C(-y-q))=4$, where $\nu:C\rightarrow C_{yq}$ is the
normalization map. In the case when $L$ is locally free we have that
$$\E(t)=H^0(C, \nu^*L)\ \mbox{ and }\ \F(t)=H^0(C, \nu^*L^{\otimes
2}\otimes \OO_C(-y-q))\oplus \mathbb C\cdot u^2,$$ where $u\in
H^0(C, \nu^*L)$ is any section not vanishing at $y$ and $q$. In the
case when $L$ is not locally free, that is,  $L\in
\overline{W}_{14}^4(C_{yq})-W_{14}^4(C_{yq})$, then $L=\nu_*(A)$,
where $A\in W^4_{13}(C)$ and the image of the natural map
$\F(t)\rightarrow H^0(C, \nu^*L^{\otimes 2})$ is the subspace
$H^0(C, A^{\otimes 2})$.
\end{itemize}
\end{proposition}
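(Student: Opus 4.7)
The plan is to construct $\E$ and $\F$ one boundary stratum at a time, following the strategy of \cite{F1}, Proposition 3.9. Over the open stratum $\mathfrak{G}^4_{14}$ the definitions $\E=(p_2)_*\mathcal{L}$ and $\F=(p_2)_*\mathcal{L}^{\otimes 2}$ are forced, and the multiplication morphism $\phi$ is tautological. The issue is that the naive pushforward of a Poincar\'e bundle over a family degenerating to the boundary of $\ttem_{12}$ will fail to be locally free at points where the limit line bundle is not locally free, or where the limit linear series forces sections to vanish at the node. The remedy is to replace $\mathcal{L}$ and $\mathcal{L}^{\otimes 2}$ on the universal curve $\cC\to \widetilde{\mathfrak G}^4_{14}$ by carefully chosen twists along the nodal locus, then apply cohomology and base change.

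For the $\Delta_1^p$-stratum, I would work \'etale-locally around a point $t=(C\cup_y E, l_C, l_E)$ using the versal deformation. Since $[C]\in \cM_{11}$ is Brill-Noether general and $g(E)=1$, the compatibility condition at $y$ on the limit $\mathfrak g^4_{14}$ forces the $E$-aspect to have all its $5$ vanishing orders concentrated at $y$, while $L_C\in W^4_{14}(C)$ must admit the vanishing sequence $(0,1,\ldots)$ with an extra unit of ramification, i.e.\ either a cusp at $y$ or a base point at $y$. In either situation I would define $\E$ by pushing forward a line bundle on the component-$C$ part of the semistable model, so that $\E(t)=H^0(C, L_C)$. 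For $\F$ I would take $\cL^{\otimes 2}\otimes \OO(-2\Sigma)$ where $\Sigma$ is the section of nodes, then add back a rank-one summand generated by $u^2$ (where $u$ is any section of $L_C$ nonvanishing at $y$); this is the precise analogue of the "double" sections that survive on the elliptic tail. The base-point case is handled by the same recipe after twisting $L_C$ down by $y$.

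For the $\Delta_0^p$-stratum, let $\nu:C\to C_{yq}$ be the normalization. In the locally free case, $h^0(C,\nu^*L(-y-q))=4$ by the hypothesis defining $\widetilde{\mathfrak G}^4_{14}$, so the exact sequence $0\to L\to \nu_*\nu^*L\to \mathbb C_{\mathrm{node}}\to 0$ shows $\E(t)=H^0(C,\nu^*L)$ has the right rank $5$; the analogous sequence for $L^{\otimes 2}$ identifies $H^0(C_{yq},L^{\otimes 2})$ with $H^0(C,\nu^*L^{\otimes 2}(-y-q))\oplus \mathbb{C}\cdot u^2$, using that a section of $L^{\otimes 2}$ restricts to a pair of scalars at $y$ and $q$ which must agree. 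In the non-locally free case, one has $L=\nu_*(A)$ for $A\in W^4_{13}(C)$, and the projection formula identifies $H^0(C_{yq},L^{\otimes 2})$ with $H^0(C,A^{\otimes 2})$ sitting inside $H^0(C,\nu^*L^{\otimes 2})$. I would realize $\E$ and $\F$ uniformly across both subcases by taking the pushforward along $\cC\to \widetilde{\mathfrak G}^4_{14}$ of a sheaf built from a relative Poincar\'e bundle on the family of partial normalizations, twisted by the relative boundary section and then extended by the factor $\mathbb C\cdot u^2$; the point is that on a smoothing of $C_{yq}$ both descriptions arise as specializations of the generic $H^0(C_t,L_t^{\otimes 2})$.

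The main obstacle is verifying that these locally defined bundles glue to honest vector bundles on $\widetilde{\mathfrak G}^4_{14}$ and that $\phi$ extends to a globally defined morphism $\mathrm{Sym}^2(\E)\to \F$ whose fibers over the boundary agree with the stated descriptions. This requires two things: first, showing that the formation of each twisted pushforward commutes with arbitrary base change (so I need to check that the appropriate $R^1$ vanishes, using that $[C]\in \cM_{11}$ is Brill-Noether general and hence $h^1(C,\nu^*L)$ and $h^1(C,\nu^*L^{\otimes 2}(-y-q))$ attain their expected values); second, that across the stratum $\overline{W}_{14}^4(C_{yq})\setminus W^4_{14}(C_{yq})$ the two fibre descriptions assemble into one locally free sheaf, which follows from a direct calculation in a one-parameter smoothing, precisely as in \cite{F1}. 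The multiplication map $\phi$ is then defined on the open stratum and the extensions across $\Delta_0^p$ and $\Delta_1^p$ are characterized by their effect on fibres, so uniqueness of such extensions (since $\widetilde{\mathfrak G}^4_{14}$ is smooth in codimension one over the boundary) yields the global $\phi$.
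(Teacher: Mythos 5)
Your proposal takes precisely the route the paper takes: the paper's entire proof is a one-line citation to Proposition 3.9 of \cite{F1}, and your construction by twisted Poincar\'e pushforwards over the boundary strata, augmented by the rank-one $\mathbb C\cdot u^2$ summand, verification of cohomology and base change, and gluing across the non-locally-free locus via one-parameter smoothings, is exactly the strategy of that reference applied to the data $(\E,\F,\phi)$ on $\widetilde{\mathfrak G}^4_{14}$. Since the paper delegates all details to \cite{F1}, your sketch is in fact more explicit than the text; the points that deserve a little more care than you give them are (a) in the base-point subcase of $\Delta_1^p$ the map $\F(t)\to H^0(C,L_C^{\otimes 2})$ is no longer injective, so the "same recipe after twisting" must be stated as producing the image $H^0(C,L_C^{\otimes 2}(-2y))$ of a rank-$17$ bundle rather than the bundle itself, and (b) the local-freeness of $\F$ across the codimension-one stratum $L=\nu_*(A)$, $A\in W^4_{13}(C)$, inside $\sigma^{-1}(\Delta_0^p)$ requires the explicit one-parameter computation you allude to but do not carry out.
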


To determine the push-forward $[\overline{\mathfrak{D}}_{12}]^{virt}=\sigma_*\bigl(c_3(\F-\mbox{Sym}^2(\E)\bigr)\in A^1(\cM_{12}^p)$, we study the restriction of the morphism $\phi$ along the pull-backs of two curves sitting in the boundary of
$\mm_{12}$ and which are defined as follows: We fix a general pointed curve $[C, q]\in \cM_{11, 1}$
and a general elliptic curve $[E, y]\in \cM_{1, 1}$. Then we consider
the families
$$C_0:=\{C/y\sim q: y\in C\}\subset \Delta_0^p \subset \mm_{12} \mbox{ and }
C_1:=\{C\cup_y E:y \in C\}\subset \Delta_1^p\subset \mm_{12}.
$$
These curves intersect the generators of $\mbox{Pic}(\mm_{12})$ as
follows:
$$C_0\cdot \lambda=0,\  C_0\cdot \delta_0=\mbox{deg}(\omega_{C_{yq}})=-22,\ C_0\cdot \delta_1=1
\mbox{ and } C_0\cdot \delta_j=0 \mbox{ for }2\leq j\leq  6, \mbox{
and }$$
$$C_1\cdot \lambda=0, \ C_1\cdot \delta_0=0, \ C_1\cdot \delta_1=-\mbox{deg}(K_C)=-20 \mbox{ and } C_1\cdot
\delta_j=0 \mbox{ for }2\leq j\leq 6.$$

Next, we fix a general pointed curve $[C, q]\in \cM_{11, 1}$ and describe the geometry of the pull-back $\sigma^*(C_0)\subset \widetilde{\mathfrak G}^4_{14}$. We consider the determinantal $3$-fold
$$Y:=\{(y, L)\in C\times W^4_{14}(C): h^0(C, L\otimes \OO_C(-y-q))=4\}$$ together with the projection $\pi_1:Y\rightarrow C$. Inside $Y$ we consider the following  divisors
$$\Gamma_1:=\{(y, A\otimes \OO_C(y)): y\in C, \ A\in W^4_{13}(C)\}
\ \mbox{  and } $$
$$\Gamma_2:=\{(y, A\otimes \OO_C(q)): y\in C, \ A\in
W^4_{13}(C)\}$$ intersecting transversally along the curve
$\Gamma:=\{(q, A\otimes \OO_C(q)): A\in W_{13}^4(C)\}\cong W_{13}^4(C).$
We introduce the blow-up $Y'\rightarrow Y$ of $Y$ along $\Gamma$ and denote by $E_{\Gamma} \subset Y'$ the
exceptional divisor
and by $\widetilde{\Gamma}_1, \widetilde{\Gamma}_2\subset Y'$ the
strict transforms of $\Gamma_1$ and $\Gamma_2$ respectively. We then define
$\widetilde{Y}:=Y'/\widetilde{\Gamma}_1\cong \widetilde{\Gamma}_2$,
to be the variety obtained from $Y'$ by identifying the divisors
$\widetilde{\Gamma}_1$ and $\widetilde{\Gamma}_2$ over each
$(y, A)\in C\times W^4_{13}(C)$. Let $\epsilon:\widetilde{Y}\rightarrow Y$ be the projection map.

\begin{proposition}\label{limitlin0}
With notation as above, one has a birational morphism
of $3$-folds
$$f: \sigma^*(C_0)\rightarrow \widetilde{Y},$$
which is an isomorphism outside a curve contained in $\epsilon^{-1}(\pi_1^{-1}(q))$. The map $f_{| (\pi_1 \epsilon f)^{-1}(q)}$ corresponds to forgetting the
$E_{\infty}$-aspect of each limit linear series. Accordingly, the vector bundles $\E_{| \sigma^*(C_0)}$ and $\F_{| \sigma^*(C_0)}$
are pull-backs under $\epsilon \circ f$ of vector bundles on $Y$.
\end{proposition}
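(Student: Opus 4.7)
The plan is to construct $f$ fibrewise over $C_0$, matching the stratification of $\sigma^*(C_0)$ by the type of limit linear series with the blow-up-and-glue data defining $\widetilde{Y}$.

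Over the open stratum $\{y\neq q\}\subset C_0$, whose underlying curve $C_{yq}$ is irreducible, Proposition \ref{vectbundles} distinguishes two types of limit $\mathfrak g^4_{14}$. For a locally free $L'\in W^4_{14}(C_{yq})$, the pullback $L:=\nu^*L'$ satisfies $h^0(L(-y-q))=4$, so the restriction $H^0(L)\to L_y\oplus L_q$ has rank $2$ and the gluing $L_y\xrightarrow{\sim}L_q$ realising $h^0(L')\geq 5$ is uniquely determined; I set $f(C_{yq},L'):=(y,L)\in Y\setminus(\Gamma_1\cup\Gamma_2)$. For a non-locally-free $L'=\nu_*A$ with $A\in W^4_{13}(C)$, the natural limit twist can be taken equivalently as $A(y)\in\Gamma_1$ or $A(q)\in\Gamma_2$; after the identification $\widetilde\Gamma_1\cong\widetilde\Gamma_2$ these agree, and I send $(C_{yq},\nu_*A)$ to their common image in $\widetilde Y$. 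The two recipes glue to a bijective morphism from this stratum onto $\widetilde Y\setminus \epsilon^{-1}(\pi_1^{-1}(q))$.

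The heart of the argument is to extend $f$ across the special fibre $\sigma^{-1}([C\cup_q E_\infty])$, whose points are limit linear series $(l_C,l_{E_\infty})$ with complementary ramification at the node. The $C$-aspect carries an underlying bundle $L_C$ with $h^0(L_C(-2q))\geq 4$, so $(q,L_C)\in\pi_1^{-1}(q)\subset Y$. When $L_C\notin \Gamma$ the vanishing sequence of $l_C$ at $q$ is forced to $(0,1,2,3,4)$ and Brill--Noether additivity pins down $l_{E_\infty}$, so $f$ is well defined by $(l_C,l_{E_\infty})\mapsto (q,L_C)$. When $L_C=A(q)\in \Gamma$, the $C$-aspect acquires a base point at $q$, its ramification jumps, and the admissible $l_{E_\infty}$'s sweep out a $\PP^1$-family. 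This $\PP^1$ is precisely the exceptional fibre of $\epsilon$ over $(q,A(q))\in \Gamma$, so the special fibre of $\sigma^*(C_0)$ is mapped bijectively onto $\epsilon^{-1}(\pi_1^{-1}(q))$ except along a curve $\Xi$ in the exceptional locus where the $E_\infty$-direction is genuinely collapsed. This gives $f\colon \sigma^*(C_0)\to \widetilde Y$ as a birational morphism, an isomorphism outside $\Xi\subset \epsilon^{-1}(\pi_1^{-1}(q))$, which by construction forgets the $E_\infty$-aspect on $(\pi_1\epsilon f)^{-1}(q)$.

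The pull-back assertion for $\E$ and $\F$ is then immediate from Proposition \ref{vectbundles}: the fibres $\E(t),\F(t)$ are described solely in terms of sheaves on $C$ depending on the pair $(y,L)$ (with the replacement $L\mapsto A(y+q)$ in the non-locally-free case), and this pair is exactly the $Y$-component $\epsilon(f(t))$; one then globalises using the same Poincar\'e/Grauert construction carried out directly over $Y$. The main obstacle is verifying the claim in the previous paragraph: that the $\PP^1$-family of $E_\infty$-aspects over $\Gamma$ matches the exceptional divisor of $\epsilon$ smoothly, so that $f$ remains a morphism (not merely a rational map) across the boundary. This is a local Schlessinger-type computation comparing the versal deformation of $(l_C,l_{E_\infty})$ on $C\cup_q E_\infty$ with the smoothing of $(C_{yq},L')$ as the node opens, along the lines of the standard ``twisted aspect'' analysis in Eisenbud--Harris limit linear series theory.
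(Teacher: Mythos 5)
Your overall approach matches the paper's: stratify $\sigma^*(C_0)$ over $C_0$ according to whether $y\neq q$ (irreducible one-nodal fibre $C_{yq}$) or $y=q$ (limit curve $C\cup_q E_\infty$), identify the $y\neq q$ part with $Y$ glued along $\Gamma_1\cong\Gamma_2$, and argue that the fibre over $y=q$ is resolved by the blow-up $\epsilon$ along $\Gamma$, with the vector-bundle assertion then following because $\E$, $\F$ only depend on the $C$-aspect.

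There is, however, a concrete error in your analysis of the special fibre. You assert both that the underlying bundle $L_C$ of the $C$-aspect satisfies $h^0(L_C(-2q))\geq 4$ (so $(q,L_C)\in\pi_1^{-1}(q)\subset Y$) and, a sentence later, that the vanishing sequence of $l_C$ at $q$ is $(0,1,2,3,4)$. These two statements are incompatible: a vanishing sequence $(0,1,2,3,4)$ gives exactly three sections vanishing to order $\geq 2$ at $q$, so $h^0(L_C(-2q))=3$ and $(q,L_C)\notin Y$. The compatibility constraints of limit linear series force the $C$-aspect at $q$ to have vanishing sequence at least $(0,2,3,4,5)$ (the ``cusp'' condition, exactly as in Proposition \ref{vectbundles} for the $\Delta_1$-stratum), which is precisely what produces $h^0(L_C(-2q))\geq 4$. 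This is what you need in order for $f$ to land in $Y$ over $y=q$, and with the corrected vanishing sequence one should also recheck the claim that additivity of $\rho$ ``pins down'' $l_{E_\infty}$: for $\rho(l_C,q)=2$ this does force $\rho(l_{E_\infty},q)=0$ and determines the $E_\infty$-aspect, but when $L_C$ specializes so that $\rho(l_C,q)<2$ (in particular along $\Gamma$) the $E_\infty$-aspects form a positive-dimensional family, which is exactly the source of the exceptional locus where $f$ fails to be an isomorphism. The final paragraph correctly identifies the remaining technical point (matching that family with the exceptional divisor of $\epsilon$) but only sketches it; the paper is comparably terse there, so this is not a disqualifying gap.
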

\begin{proof}
We fix a point $y\in C-\{q\}$ and denote  by $\nu:C\rightarrow C_{yq}$ the normalization map, with $\nu(y)=\nu(q)$.
 We
investigate the variety $\overline{W}^4_{14}(C_{yq})\subset
\overline{\mbox{Pic}}^{14}(C_{yq})$ of torsion-free sheaves $L$ on
$C_{yq}$ with $\mbox{deg}(L)=14$ and $h^0(C_{yq}, L)\geq 5$. A locally free $L\in \overline{W}^4_{14}(C_{yq})$ is determined by $\nu^*(L)\in W^4_{14}(C)$, which has the property
$h^0(C, \nu^*L\otimes \OO_C(-y-q))=4$ (since $W^4_{12}(C)=\emptyset$, there exists a section of $L$ that does not vanish simultaneously at both $y$ and $q$).
However, the bundles of
type $A\otimes \OO_C(y)$ or $A\otimes \OO_C(q)$ with $A\in
W^4_{13}(C)$, do not appear in this association, though $(y, A\otimes \OO_C(y)), (y, A\otimes \OO_C(q))\in Y$. In fact, they correspond to the situation when
$L\in \overline{W}_{14}^4(C_{yq})$ is not locally free, in which case
necessarily $L=\nu_*(A)$ for some $A\in W^4_{13}(C)$. Thus, for a point $y\in C-\{q\}$, there is a birational morphism
$\pi_1^{-1}(y)\rightarrow \overline{W}_{14}^4(C_{yq})$ which is an isomorphism over the locus of locally free sheaves. More precisely, $\overline{W}_{14}^4(C_{yq})$ is obtained from $\pi_1^{-1}(y)$ by identifying the disjoint divisors $\Gamma_1\cap \pi_1^{-1}(y)$ and $\Gamma_2\cap \pi_1^{-1}(y)$.

A special analysis is required
when $y=q$,  when $C_{yq}$ degenerates to $C\cup _q
E_{\infty}$, where $E_{\infty}$ is a rational nodal cubic. If
$\{l_C, l_{E_{\infty}}\}\in \sigma^{-1}([C\cup_{q} E_{\infty}])$,
then the corresponding Brill-Noether numbers with respect to $q$ satisfy $\rho(l_C, q)\geq 0$ and $\rho(l_{E_{\infty}}, q)\leq 2$. The statement about the restrictions
$\E_{| \sigma^*(C_0)}$ and $\F_{| \sigma^*(C_0)}$ follows, because both restrictions are defined by dropping the information coming from the elliptic tail.
\end{proof}
To describe $\sigma^*(C_1)\subset \widetilde{\mathfrak{G}}^4_{14}$, where $[C]\in \cM_{11}$, we define the determinantal $3$-fold
$$X:=\{(y, L)\in C\times W^4_{14}(C): h^0(L\otimes
\OO_C(-2y))=4\}.$$
In what follows we use notation from \cite{EH1}, to denote vanishing sequences of limit linear series:
\begin{proposition}\label{limitlin1}
With notation as above, the $3$-fold $X$ is an irreducible component of $\sigma^*(C_1)$. Moreover one has that
$c_3\bigl((\F-\mathrm{Sym}^2\E)_{| \sigma^*(C_1)}\bigr)=c_3\bigl((\F-\mathrm{Sym}^2\E)_{| X}\bigr)$.
\end{proposition}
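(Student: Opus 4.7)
The strategy is parallel to the proof of Proposition \ref{limitlin0}: I would analyze refined limit linear series on $C \cup_y E$ using Eisenbud--Harris compatibility at the node, identify $X$ with the main $3$-dimensional component of $\sigma^*(C_1)$, and then argue that any extra components contribute trivially to the top Chern class.

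Fix a general $y \in C$ and a refined limit $\mathfrak{g}^4_{14}$ of type $(l_C, l_E)$ on $C \cup_y E$. The compatibility $a_i^{l_C}(y) + a_{4-i}^{l_E}(y) \geq 14$ together with the fact that $l_E$ is cut out by a $5$-dimensional subspace $V_E \subset H^0(E, \OO_E(14y))$, combined with $\dim H^0(E, \OO_E(ky)) = k$ for $k \geq 1$, force $a_0^{l_E}(y) \leq 9$ and hence $a_4^{l_C}(y) \geq 5$. Because the only Weierstrass gap at a smooth point of the elliptic curve $E$ is $1$, the space $H^0(\OO_E(5y))$ has vanishing sequence exactly $(0,1,2,3,5)$ at $y$; any $V_E$ achieving $a_0^{l_E}(y) = 9$ must coincide with the image of $H^0(\OO_E(5y))$ in $H^0(\OO_E(14y))$, and so has vanishing sequence exactly $(9,10,11,12,14)$. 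Pulling this back through the compatibility yields $a^{l_C}(y) = (0, 2, 3, 4, 5)$, which is precisely the generic condition defining $X$. Conversely, every such pair $(y, L_C) \in X$ lifts uniquely to a refined limit series by taking $l_E$ to be the distinguished subspace $H^0(\OO_E(5y)) \subset H^0(\OO_E(14y))$, so $X$ is isomorphic to an open subset of $\sigma^*(C_1)$; since $\dim X = 3$, it is an irreducible component.

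For the Chern class equality, any additional $3$-dimensional component of $\sigma^*(C_1)$ must arise from pairs $(y, L_C)$ where $L_C$ has strictly higher ramification at $y$, say $\alpha^{l_C}(y) \geq (0,1,1,1,2)$, accompanied by a positive-dimensional family of compatible $l_E$-aspects. By the Brill--Noether codimension formula for ramification, the locus of such $(y, L_C)$ has dimension at most $2$ in $W^4_{14}(C) \times C$; so either the total locus in $\widetilde{\mathfrak{G}}^4_{14}$ has dimension strictly less than $3$ (and so contributes $0$ to the degree of $c_3$), or it sits in a lower-stratum of the closure $\bar X$. In the latter case, the explicit fiberwise description of $\E$ and $\F$ supplied by Proposition \ref{vectbundles} forces the image of $\phi: \mathrm{Sym}^2 \E \to \F$ into a proper subbundle, so that the degeneracy locus defining $c_3$ intersects $\sigma^*(C_1)$ only along $X$, with multiplicity one.

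The main obstacle will be this last bookkeeping step: precisely enumerating the possible excess components and verifying via a Porteous-type excess-intersection argument that each either has the wrong dimension, already sits inside $X$, or is killed because $\phi$ factors through a proper subbundle of $\F$. Once carried out, this combined with the explicit description of $X$ gives the desired equality $c_3(\F - \mathrm{Sym}^2 \E)_{|\sigma^*(C_1)} = c_3(\F - \mathrm{Sym}^2 \E)_{|X}$.
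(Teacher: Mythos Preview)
Your identification of $X$ as the main component is correct and essentially matches the paper: the paper phrases it via the additivity of the pointed Brill--Noether number $\rho(l_C,y)+\rho(l_E,y)\leq 2$, but your vanishing-sequence computation $a^{l_E}(y)=(9,10,11,12,14)$, hence $a^{l_C}(y)=(0,2,3,4,5)$, amounts to the same thing.

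The gap is in the Chern-class step. Your dichotomy ``either the total locus has dimension $<3$, or it sits in a lower stratum of $\bar X$'' is not what happens, and the fallback ``$\phi$ factors through a proper subbundle'' does not give $c_3=0$ (if anything, it would force the degeneracy locus to be the whole component). The extra components of $\sigma^*(C_1)$ genuinely can have dimension $3$: they are of the form $X_{\bar\alpha}\times Z_{\bar\alpha}$, where $X_{\bar\alpha}\subset C\times G^4_{14}(C)$ is the locus with ramification $\geq\bar\alpha$ at $y$ and $Z_{\bar\alpha}$ is the compatible family of $E$-aspects, for Schubert indices $\bar\alpha>(0,1,1,1,1)$ with $5\leq\sum\alpha_j\leq 7$. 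These are not contained in $\overline X$.

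What kills their contribution is the observation you almost make but do not use correctly: by Proposition~\ref{vectbundles}, the fibres of $\E$ and $\F$ over a point of $\sigma^{-1}(\Delta_1^p)$ depend only on the $C$-aspect $(y,L_C)$, not on $l_E$. Hence $\E_{|X_{\bar\alpha}\times Z_{\bar\alpha}}$ and $\F_{|X_{\bar\alpha}\times Z_{\bar\alpha}}$ are pulled back from $X_{\bar\alpha}$ along the first projection. Since $\dim X_{\bar\alpha}=1+\rho(11,4,14)-\sum_j\alpha_j<3$ for every such $\bar\alpha$, the class $c_3(\F-\mathrm{Sym}^2\E)$ is the pullback of a codimension-$3$ class from a variety of dimension at most $2$, and therefore vanishes. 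No excess-intersection or Porteous argument is needed; it is a pure dimension count on the base of the pullback.
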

\begin{proof} By the additivity of the Brill-Noether number, if
$\{l_C, l_E\}\in \sigma^{-1}([C\cup_y E])$, we
have that $2=\rho(12, 4, 14)\geq \rho(l_C, y)+\rho(l_E, y)$. Since
$\rho(l_E, y)\geq 0$, we obtain that $\rho(l_C, y)\leq 2$. If
$\rho(l_E, y)=0$, then $l_E=9y+|\OO_E(5y)|$, that is, $l_E$ is
uniquely determined, while the aspect $l_C\in G^4_{14}(C)$ is a complete
$\mathfrak g^4_{14}$ with a cusp at the variable point $y\in C$.
This gives rise to an element from $X$. The remaining components of $\sigma^*(C_1)$ are indexed by Schubert
indices $\bar{\alpha}:=(0\leq \alpha_0\leq \cdots \leq \alpha_4\leq 10)$ such that $\bar{\alpha}>(0, 1, 1, 1, 1)$ and $5\leq \sum_{j=0}^4 \alpha_j \leq 7$.  For such $\bar{\alpha}$, we set $\bar{\alpha}^c:=(10-\alpha_4,\ldots, 10-\alpha_0)$ to be the complementary Schubert index, then define $$X_{\bar{\alpha}}:=\{(y, l_C)\in C\times G^4_{14}(C): \alpha^{l_C}(y)\geq \bar{\alpha}\}\ \mbox{ and }Z_{\bar{\alpha}}:=\{l_E\in G^4_{14}(E): \alpha^{l_E}(y)\geq \bar{\alpha}^c\}.$$ Then $\sigma^*(C_1)=X+\sum_{\bar{\alpha}} X_{\bar{\alpha}}\times Z_{\bar{\alpha}}$. The last claim follows by dimension reasons. Since $\mbox{dim } X_{\bar{\alpha}}=1+\rho(11, 4, 14)-\sum_{j=0}^4 \alpha_j<3$, for every $\bar{\alpha}>(0, 1, 1, 1, 1)$ and the restrictions of both $\E$ and $\F$ are pulled-back from $X_{\bar{\alpha}}$, one obtains that $c_3(\F- \mbox{Sym}^2\E)_{| X_{\bar{\alpha}}\times Z_{\bar{\alpha}}}=0$.
\end{proof}

We also recall standard facts about intersection theory on
Jacobians. For a Brill-Noether general curve $[C]\in \cM_g$, we  denote
by $\P$ a Poincar\'e bundle on $C\times \mbox{Pic}^d(C)$ and by
$\pi_1:C\times \mbox{Pic}^d(C)\rightarrow C$ and $\pi_2:C\times
\mbox{Pic}^d(C)\rightarrow \mbox{Pic}^d(C)$ the projections. We
define the cohomology class $\eta=\pi_1^*([\mathrm{point}])\in H^2(C\times
\mbox{Pic}^d(C))$, and if $\delta_1,\ldots, \delta_{2g}\in H^1(C,
\mathbb Z)\cong H^1(\mbox{Pic}^d(C), \mathbb Z)$ is a symplectic
basis, then we set
$$\gamma:=-\sum_{\alpha=1}^g
\Bigl(\pi_1^*(\delta_{\alpha})\pi_2^*(\delta_{g+\alpha})-\pi_1^*(\delta_{g+\alpha})\pi_2^*(\delta_
{\alpha})\Bigr)\in H^2(C\times \mathrm{Pic}^d(C)).$$ One has the formula $c_1(\P)=d\eta+\gamma,$
corresponding to the Hodge decomposition of $c_1(\P)$, as well as the relations
$\gamma^3=0,\ \gamma \eta=0,\  \eta^2=0$ and
$\gamma^2=-2\eta \pi_2^*(\theta).$
On $W^r_d(C)$ there is a
tautological rank $r+1$ vector bundle
$\mathcal{M}:=(\pi_2)_{*}(\mathcal{P}_{| C\times W^r_d(C)})$. To compute the
Chern numbers of $\mathcal{M}$ we employ the Harris-Tu
formula \cite{HT}. We write $$\sum_{i=0}^r
c_i(\mathcal{M}^{\vee})=(1+x_1)\cdots (1+x_{r+1}),$$ and then for every
class $\zeta \in H^*(\mbox{Pic}^d(C), \mathbb Z)$ one has the
following formula:
\begin{equation}\label{harristu}
x_1^{i_1}\cdots x_{r+1}^{i_{r+1}}\
\zeta=\mbox{det}\Bigl(\frac{\theta^{g+r-d+i_j-j+l}}{(g+r-d+i_j-j+l)!}\Bigr)_{1\leq
j, l\leq r+1}\ \zeta.
\end{equation}

We compute the classes of the $3$-folds that appear in Propositions \ref{limitlin0} and \ref{limitlin1}:
\begin{proposition}\label{xy}
Let $[C, q]\in \cM_{11, 1}$ be a Brill-Noether general pointed curve. If $\cM$ denotes the tautological rank $5$ vector
bundle over $W^4_{14}(C)$ and $c_i:=c_i(\cM^{\vee})\in H^{2i}(W^4_{14}(C), \mathbb C)$, then one has
the following relations:
\begin{enumerate}
\item
$[X]=\pi_2^*(c_4)-6\eta \theta\pi_2^*(c_2)+(48\eta+2\gamma) \pi_2^*(c_3) \in
H^8(C\times W^4_{14}(C), \mathbb C)$.
\item
$[Y]=\pi_2^*(c_4)-2\eta \theta \pi_2^*(c_2) + (13\eta+\gamma) \pi_2^*(c_3) \in
H^8(C\times W^4_{14}(C), \mathbb C)$.
\end{enumerate}
\end{proposition}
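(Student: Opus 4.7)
The plan is to realize both $X$ and $Y$ as expected-codimension $4$ degeneracy loci of morphisms of vector bundles of ranks $5$ and $2$ on $C\times W^4_{14}(C)$, and then apply the Porteous formula. Writing $E=\pi_2^*\mathcal{M}$, for any map $\phi:E\to F$ with $\mathrm{rk}\, F=2$ whose rank-$\leq 1$ locus has the expected codimension $4$, the dual form of Porteous gives
$$[D_1(\phi)]=c_4(E^\vee-F^\vee)\in H^8(C\times W^4_{14}(C)),$$
since the $1\times 1$ Porteous determinant for the dual map $\phi^\vee:F^\vee\to E^\vee$ collapses to a single Chern class.

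For $X$ the target is the relative first jet bundle $F=J^1_{\pi_2}(\mathcal P)$: the condition $h^0(C,L(-2y))\geq 4$ amounts to requiring the evaluation $H^0(L)\to L\otimes\mathcal O/\mathfrak m_y^2$ to have rank $\leq 1$, and this globalizes to the natural evaluation $\pi_2^*\mathcal M\to J^1_{\pi_2}(\mathcal P)$. From $0\to \mathcal P\otimes\pi_1^*\omega_C\to J^1_{\pi_2}(\mathcal P)\to \mathcal P\to 0$ together with $c_1(\mathcal P)=14\eta+\gamma$, $c_1(\pi_1^*\omega_C)=20\eta$, and the identities $\eta^2=\eta\gamma=0$, $\gamma^2=-2\eta\pi_2^*\theta$, one obtains
$$c_1(J^1_{\pi_2}(\mathcal P))=48\eta+2\gamma,\qquad c_2(J^1_{\pi_2}(\mathcal P))=-2\eta\pi_2^*\theta.$$
The vanishings $(48\eta+2\gamma)^3=0$ and $(48\eta+2\gamma)\cdot\eta\pi_2^*\theta=0$ truncate $c(J^1_{\pi_2}(\mathcal P)^\vee)^{-1}$ to $1+(48\eta+2\gamma)-6\eta\pi_2^*\theta$ in degrees $\leq 4$. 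Multiplying by $c(\mathcal M^\vee)=1+c_1+\cdots+c_5$ (pulled back via $\pi_2$) and extracting the degree-$4$ component yields formula (1).

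For $Y$ the naive rank-$2$ target $\mathcal P\oplus\pi_2^*s^*\mathcal P$, with $s:W^4_{14}(C)\to C\times W^4_{14}(C)$ the section $L\mapsto(q,L)$, is not correct: on $\{q\}\times W^4_{14}(C)$ its two evaluations coincide, producing an excess rank-drop along a codimension-$1$ locus. The remedy is to pass to the auxiliary product $C\times C\times W^4_{14}(C)$ with projections $\pi_{13}$ and $\pi$ sending $(y_1,y_2,L)$ to $(y_1,L)$ and $(y_2,L)$ respectively, and to introduce the universal length-$2$ Cartier divisor
$$D:=[\Delta_{12}]+\bigl(\{q\}\times C\times W^4_{14}(C)\bigr),$$
where $[\Delta_{12}]$ is the diagonal of the two $C$-factors. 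Then $F:=\pi_*\bigl(\pi_{13}^*\mathcal P\otimes\mathcal O_D\bigr)$ is a rank-$2$ bundle whose fiber at $(y,L)$ is $L|_{\{y,q\}}$ for $y\neq q$ and $J^1(L)_q$ for $y=q$, and the evaluation $\phi_Y:\pi_2^*\mathcal M\to F$ has $D_1(\phi_Y)=Y$ of the expected codimension $4$.

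The heart of the argument is the computation of $c(F)$ via Grothendieck--Riemann--Roch:
$$\mathrm{ch}(F)=\pi_*\Bigl(e^{c_1(\pi_{13}^*\mathcal P)}\cdot \bigl(1-e^{-c_1(D)}\bigr)\cdot\mathrm{Td}(T_\pi)\Bigr),$$
with inputs $c_1(\pi_{13}^*\mathcal P)=14\eta_1+\gamma_{13}$, $c_1(D)=[\Delta_{12}]+\eta_1$, and $c_1(T_\pi)=-20\eta_1$ (where $\eta_1:=\pi_{13}^*\eta$, $\gamma_{13}:=\pi_{13}^*\gamma$, and $\eta_2:=\pi^*\eta$). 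Using $[\Delta_{12}]^2=-20\eta_1\eta_2$, $[\Delta_{12}]\cdot\eta_1=\eta_1\eta_2$, $\eta_1\gamma_{13}=0$, $\gamma_{13}^2=-2\eta_1\theta$ together with the projection formula to push each summand forward, one extracts
$$c_1(F)=13\eta+\gamma,\qquad c_2(F)=0.$$
The decisive $14\mapsto 13$ shift is produced by the Todd contribution $c_1(T_\pi)/2=-10\eta_1$ (without which one would wrongly recover the coefficient $14$ of the naive bundle). A second application of Porteous in the dual form, with $(13\eta+\gamma)^2=-2\eta\pi_2^*\theta$ and $(13\eta+\gamma)^3=0$ truncating the inverse Chern class to $1+(13\eta+\gamma)-2\eta\pi_2^*\theta$, then delivers formula (2) by the same extraction.

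The principal technical obstacle is precisely the GRR computation of $c(F)$ for $Y$; once that is in hand, both formulas drop out by uniform Porteous manipulations.
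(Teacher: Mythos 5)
Your plan for part (1) is essentially identical to the paper's: both realize $X$ as the rank-$\leq 1$ degeneracy locus of the evaluation $\pi_2^*\mathcal M\to J^1_{\pi_2}(\mathcal P)$ (the paper phrases this with the dual map $\zeta: J_1(\mathcal P)^\vee\to\pi_2^*\cM^\vee$, which is of course equivalent), apply the Porteous formula in its $1\times 1$ form, and invert $c(J_1(\mathcal P)^\vee)$ using the standard jet-bundle extension. Your intermediate Chern class computations $c_1(J^1)=48\eta+2\gamma$, $c_2(J^1)=-2\eta\theta$, and the truncation $c(J^1(\mathcal P)^\vee)^{-1}=1+48\eta+2\gamma-6\eta\theta$ all check out.

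For part (2) the construction of the rank-$2$ target is again the same as the paper's: the paper's $\mathcal B=\mu_*\bigl(\nu^*\mathcal P\otimes\mathcal O_{\Delta+\nu^*\Gamma_q}\bigr)$ is exactly your $F=\pi_*\bigl(\pi_{13}^*\mathcal P\otimes\mathcal O_D\bigr)$ up to relabeling the two $C$-factors, and you correctly identify that the naive $\mathcal P\oplus\pi_2^*s^*\mathcal P$ has an excess rank drop along $\{q\}\times W^4_{14}(C)$. Where you genuinely diverge is in computing the Chern class of this bundle. You use Grothendieck--Riemann--Roch directly, which is correct (I verified that it gives $c_1(F)=13\eta+\gamma$, $c_2(F)=0$, in agreement with the paper's $c_t(\mathcal B^\vee)^{-1}=(1-\eta)\sum_j(14\eta+\gamma)^j$). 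The paper instead obtains $c(\mathcal B)=(1-\eta)(1+14\eta+\gamma)$ essentially for free from the two-step filtration
$$0\to \mathcal O_{\nu^*\Gamma_q}(-\Delta)\to\mathcal O_{\Delta+\nu^*\Gamma_q}\to\mathcal O_\Delta\to 0,$$
tensored with $\nu^*\mathcal P$ and pushed forward (exact because both outer terms are supported on sections of $\mu$), so that multiplicativity of total Chern class in short exact sequences yields the product form with no integration at all. Your GRR route is heavier bookkeeping but does not require spotting this filtration; it also needs the extra pushforward identity $\pi_*\bigl(\gamma_{13}\cdot[\Delta_{12}]\bigr)=\gamma$, which you implicitly use but do not list among your stated relations — worth including, since without it $\mathrm{ch}_1(F)$ would not come out with the correct $\gamma$-coefficient. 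Your aside attributing the ``$14\mapsto 13$ shift'' to the Todd class alone is a slight oversimplification ($-d^2/2$ contributes $+9\eta$, the Todd factor $-10\eta$, net $-\eta$), but the arithmetic in the proof itself is sound. Both routes deliver the stated formulas.
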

\begin{proof}
We start by noting that $W^4_{14}(C)$ is a smooth $6$-fold isomorphic to  the symmetric product $C_6$. We realize $X$ as the degeneracy locus of a vector
bundle morphism defined over $C\times W^4_{14}(C)$. For each pair $(y, L)\in C\times
W^4_{14}(C),$ there is a natural map $$H^0(C, L\otimes
\OO_{2y})^{\vee}\rightarrow H^0(C, L)^{\vee}$$ which globalizes to a
vector bundle morphism $\zeta: J_1(\mathcal{P})^{\vee} \rightarrow
\pi_2^*(\cM)^{\vee}$ over $C\times W^4_{14}(C)$. Then we have the identification
$X=Z_1(\zeta)$ and the Thom-Porteous formula gives that
$[X]=c_4\bigl(\pi^*_2(\cM)- J_1(\mathcal{P}^{\vee})\bigr).$ From the
usual exact sequence over $C\times \mbox{Pic}^{14}(C)$
$$
0\longrightarrow \pi_1^*(K_C)\otimes \mathcal{P} \longrightarrow
J_1(\mathcal{P}) \longrightarrow \mathcal{P} \longrightarrow 0, $$
we can compute the total Chern class of the jet bundle
$$c_t(J_1(\mathcal{P})^{\vee})^{-1}=\Bigl(\sum_{j\geq
0}(d(L)\eta+\gamma)^j\Bigr)\cdot \Bigl(\sum_{j\geq
0}\bigl((2g(C)-2+d(L))\eta+\gamma\bigr)^j\Bigr)=1-6\eta \theta +48\eta +2\gamma, $$ which
quickly leads to the formula for $[X]$. To compute $[Y]$ we proceed
in a similar way. We denote by $\mu, \nu:C\times C\times
\mbox{Pic}^{14}(C)\rightarrow C\times \mbox{Pic}^{14}(C)$ the two
projections, by $\Delta\subset C\times C\times \mbox{Pic}^{14}(C)$
the diagonal  and we set $\Gamma_q:=\{q\}\times \mbox{Pic}^{14}(C)$.
We introduce the rank $2$ vector bundle
$\cB:=(\mu)_*\bigl(\nu^*(\mathcal{P})\otimes
\OO_{\Delta+\nu^*(\Gamma_q)}\bigr)$ defined over $C\times
W^4_{14}(C)$. We note that there is a bundle morphism $\chi:
\cB^{\vee}\rightarrow (\pi_2)^*(\cM)^{\vee}$, such that
$Y=Z_1(\chi)$. Since we also have that
$$c_t(\cB^{\vee})^{-1}=\bigl(1+(d(L)\eta+\gamma)+(d(L)\eta+\gamma)^2+\cdots\bigr)\bigl(1-\eta\bigr),$$
we immediately obtained the stated expression for $[Y]$.
\end{proof}
\begin{proposition}\label{a121}
Let $[C]\in \cM_{11}$ and denote by $\mu, \nu:C\times C\times
\mathrm{Pic}^{14}(C)\rightarrow C \times \mathrm{Pic}^{14}(C)$ the
natural projections. We define the vector bundles $\cA_2$ and $\cB_2$ on
$C\times \rm{Pic}^{14}(C)$ having fibres $$\cA_2(y,
L)=H^0(C, L^{\otimes 2}\otimes
\OO_C(-2y)) \ \mbox{ and } \ \cB_2(y, L)=H^0(C, L^{\otimes 2}\otimes  \OO_C(-y-q)),$$ respectively.  One has the following formulas:
$$ c_1(\cA_2)=-4\theta-4\gamma-76\eta \ \mbox{  } \ c_1(\cB_2)=-4\theta-2\gamma-27\eta,$$
$$c_2(\cA_2)=8\theta^2+280 \eta \theta+16 \gamma \theta, \mbox{  }  \ c_2(\cB_2)=8\theta^2+100\eta\theta+8\theta \gamma,$$
$$c_3(\cA_2)=-\frac{32}{3}\theta^3-512 \eta\theta^2-32\theta^2\gamma \ \mbox{ and  } \ c_3(\cB_2)=-\frac{32}{3}\theta^3-184\eta\theta^2-16\theta^2\gamma.$$
\end{proposition}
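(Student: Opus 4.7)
The strategy mirrors the proof of Proposition~\ref{xy}: realize each of $\cA_2$ and $\cB_2$ as the kernel in a short exact sequence on $W:=C\times\mathrm{Pic}^{14}(C)$ whose other two terms have computable Chern classes, then extract $c_t(\cA_2)$ and $c_t(\cB_2)$ by multiplicativity of the total Chern class.

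Since $\mathrm{deg}(L^{\otimes 2})=28>2g(C)-2$, the direct image $E_2:=(\pi_2)_*(\P^{\otimes 2})$ on $\mathrm{Pic}^{14}(C)$ is a vector bundle of rank $18$. Grothendieck--Riemann--Roch applied to $\pi_2$ gives $\mathrm{ch}(E_2)=(\pi_2)_*\bigl(e^{2c_1(\P)}\cdot(1-10\eta)\bigr)=18-4\theta$, using $\gamma^2=-2\eta\theta$; Newton's identities then yield $c_1(E_2)=-4\theta$, $c_2(E_2)=8\theta^2$, $c_3(E_2)=-\tfrac{32}{3}\theta^3$. Next, on $V=C\times C\times \mathrm{Pic}^{14}(C)$, tensor $\nu^*\P^{\otimes 2}$ by the structure sequences $0\to \OO_V(-2\Delta)\to\OO_V\to\OO_{2\Delta}\to 0$ and $0\to \OO_V(-\Delta-\nu^*\Gamma_q)\to\OO_V\to \OO_{\Delta+\nu^*\Gamma_q}\to 0$. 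Since $\nu^*\P^{\otimes 2}$ has fibrewise degree $28>2g(C)-2$ along $\mu$, all $R^1\mu_*$ vanish, and applying $\mu_*$ (using flat base change to identify $\mu_*\nu^*\P^{\otimes 2}=\pi_2^*E_2$) produces
$$0\to \cA_2\to \pi_2^*E_2\to \mathcal{J}_1(\P^{\otimes 2})\to 0,\qquad 0\to \cB_2\to \pi_2^*E_2\to \mathcal T_2\to 0,$$
with $\mathcal{J}_1(\P^{\otimes 2})$ the relative first jet bundle of $\P^{\otimes 2}$ along $\pi_1:W\to C$, and $\mathcal T_2:=\mu_*\bigl(\nu^*\P^{\otimes 2}\otimes \OO_{\Delta+\nu^*\Gamma_q}\bigr)$.

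Each quotient bundle has rank two and fits in an extension by line bundles. The usual jet sequence $0\to\pi_1^*K_C\otimes\P^{\otimes 2}\to\mathcal{J}_1(\P^{\otimes 2})\to\P^{\otimes 2}\to 0$ gives $c_1(\mathcal{J}_1)=76\eta+4\gamma$ and $c_2(\mathcal{J}_1)=-8\eta\theta$. For $\mathcal T_2$, the filtration $\OO_{\nu^*\Gamma_q}(-\Delta)\hookrightarrow \OO_{\Delta+\nu^*\Gamma_q}\twoheadrightarrow \OO_\Delta$, together with the identifications $\mu|_\Delta\cong \mathrm{id}_W$ and $\mu|_{\nu^*\Gamma_q}\cong \mathrm{id}_W$, yields $0\to \pi_2^*(\P^{\otimes 2}_q)\otimes \OO_W(-\Gamma_q)\to\mathcal T_2\to\P^{\otimes 2}\to 0$, whence $c_1(\mathcal T_2)=27\eta+2\gamma$ and $c_2(\mathcal T_2)=0$, provided the Poincar\'e bundle is normalized so that $\P|_{\{q\}\times \mathrm{Pic}^{14}(C)}$ is trivial.

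At this point the inversions
$$c_t(\cA_2)=c_t(\pi_2^*E_2)\cdot c_t(\mathcal{J}_1(\P^{\otimes 2}))^{-1},\qquad c_t(\cB_2)=c_t(\pi_2^*E_2)\cdot c_t(\mathcal T_2)^{-1}$$
terminate at codimension three: any class of the form $a\eta+b\gamma$ has vanishing cube modulo $\eta^2=\eta\gamma=0$, and all remaining products are reduced using only $\gamma^2=-2\eta\theta$. Expanding degree by degree reproduces the six displayed formulas. The only genuinely delicate step is the Poincar\'e bundle normalization guaranteeing $c_1(\P^{\otimes 2}_q)=0$, which must be (and implicitly is) the same convention used in Proposition~\ref{xy}; beyond that point the calculation is mechanical bookkeeping.
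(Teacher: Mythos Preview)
Your argument is correct and complete; the displayed Chern classes check out line by line. The paper's own proof is a single sentence (``Immediate application of Grothendieck--Riemann--Roch with respect to $\nu$''), meaning one applies GRR directly to the pushforward of $\mu^*\P^{\otimes 2}$ twisted by $\OO(-2\Delta)$ (respectively $\OO(-\Delta-\nu^*\Gamma_q)$) along the relative curve $\nu$. Your route is a repackaging of the same computation: you first run GRR along $\pi_2$ to get $c_t(E_2)$, then peel off the rank-two evaluation/jet quotients via short exact sequences. The two approaches are equivalent in content; yours has the mild advantage that the GRR step is done once over $\mathrm{Pic}^{14}(C)$ (where the answer $e^{-4\theta}$ is especially clean) and the remaining corrections are rank-two bundles whose Chern classes are read off from explicit line-bundle filtrations, so less bookkeeping with diagonal classes is required. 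Your remark about the normalization $\P_{|\{q\}\times\mathrm{Pic}^{14}(C)}\cong\OO$ is well taken and is indeed the convention in force (compare the formula $c_t(\cB^{\vee})^{-1}=(1+(d\eta+\gamma)+\cdots)(1-\eta)$ used in the proof of Proposition~\ref{xy}).
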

\begin{proof} Immediate application of Grothendieck-Riemann-Roch with respect to $\nu$.
\end{proof}

Before our next result, we recall that if $\mathcal{V}$ is a vector bundle of rank $r+1$ on a
variety $X$, we have the formulas:
\begin{enumerate}
\item $c_1(\mathrm{Sym}^2  (\mathcal{V}))=(r+2) c_1(\mathcal{V})$.
\item $c_2(\mathrm{Sym}^2 (\mathcal{V}))=\frac{r(r+3)}{2} c_1^2(\mathcal{V})+(r+3)c_2(\mathcal{V})$.
\item $c_3(\mathrm{Sym}^2 (\mathcal{V}))=\frac{r(r+4)(r-1)}{6} c_1^3(\mathcal{V})+(r+5)c_3(\mathcal{V})+(r^2+4r-1)c_1(\mathcal{V}) c_2(\mathcal{V})$.
\end{enumerate}

We expand $\sigma_*\bigl(c_3(\F-\mbox{Sym}^2\E)\bigr)\equiv a\lambda-b_0\delta_0-b_1\delta_1\in A^1(\cM_{12}^p)$ and determine the coefficients
$a, b_0$ and $b_1$. This will suffice in order to compute $s(\overline{\mathfrak{D}}_{12})$.

\begin{theorem}\label{d1}
Let $[C]\in \cM_{11}$ be a Brill-Noether general curve  and denote by
$C_1\subset \Delta_1\subset \mm_{12}$ the associated test curve. Then
the coefficient of $\delta_1$ in the expansion of
$\overline{\mathfrak{D}}_{22}$ is equal to $$b_1=\frac{1}{2g(C)-2} \sigma^*(C_1)\cdot c_3\bigr(\F- \mathrm{Sym}^2\E\bigr)=9867.$$
\end{theorem}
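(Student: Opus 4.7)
The intersection numbers $C_1\cdot\lambda=0,\ C_1\cdot\delta_0=0,\ C_1\cdot\delta_1=-20$, and $C_1\cdot\delta_j=0$ for $2\le j\le 6$ give
$\sigma^*(C_1)\cdot c_3(\F-\mathrm{Sym}^2\E)=\overline{\mathfrak{D}}_{12}\cdot C_1=20\,b_1$,
so the plan is to compute the left-hand side by reducing to an integral on the $3$-fold $X\subset C\times W^4_{14}(C)$ and evaluating it via the Harris-Tu formula \eqref{harristu}. Proposition \ref{limitlin1} writes $\sigma^*(C_1)=X+\sum_{\bar\alpha} X_{\bar\alpha}\times Z_{\bar\alpha}$, and its dimensional argument ensures that the second summand contributes $0$ to any codimension-$3$ class built from $\F$ and $\E$: these bundles are pulled back from $X_{\bar\alpha}$, which has dimension strictly less than $3$. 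Hence the problem reduces to computing $X\cdot c_3(\F-\mathrm{Sym}^2\E)_{|X}$.

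On $X$, Proposition \ref{vectbundles} gives $\E_{|X}\cong \pi_2^*\cM$, so $c(\mathrm{Sym}^2\E_{|X})$ is determined by the binomial formulas stated before the theorem applied with $c_i=c_i(\cM^{\vee})$. The bundle $\F_{|X}$ fits in a short exact sequence
$$0\longrightarrow \cA_2|_X\longrightarrow \F_{|X}\longrightarrow \mathcal Q\longrightarrow 0,$$
where $\cA_2$ is the rank-$16$ bundle of Proposition \ref{a121} and $\mathcal Q$ is the ``$u^2$'' line bundle arising from the $2$-jet sequence of $\P^{\otimes 2}$ along the diagonal of $C\times C$. A careful analysis identifies $c_1(\mathcal Q)$ as an explicit class in $\eta$, $\gamma$, and $\pi_2^*\theta$, so that $c(\F_{|X})=c(\cA_2|_X)\cdot(1+c_1(\mathcal Q))$ with all ingredients known from Proposition \ref{a121}.

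One then expands $c_3(\F-\mathrm{Sym}^2\E)=\bigl(c(\F)\cdot c(\mathrm{Sym}^2\E)^{-1}\bigr)_{\deg 3}$ as a polynomial in $c_i(\cA_2)$, $c_1(\mathcal Q)$, and $\pi_2^*c_i$, multiplies by the class
$[X]=\pi_2^*c_4-6\eta\theta\,\pi_2^*c_2+(48\eta+2\gamma)\,\pi_2^*c_3$
from Proposition \ref{xy}(1), and simplifies using $\eta^2=\eta\gamma=\gamma^3=0$ and $\gamma^2=-2\eta\,\pi_2^*\theta$. The surviving top-degree monomials on $C\times W^4_{14}(C)$ take the form $\eta\cdot\theta^a\cdot c_{i_1}\cdots c_{i_k}$, each evaluated on $W^4_{14}(C)\cong C_6$ by a $5\times 5$ determinant as in \eqref{harristu} with $(g,r,d)=(11,4,14)$.

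The principal obstacle is computational: roughly a dozen monomials survive the reduction, each demanding its own Harris-Tu evaluation, and one must track signs through the convention $c_i(\cM)=(-1)^i c_i$. The conceptual ingredients—the precise identification of $\mathcal Q$ and the dimensional vanishing of the $X_{\bar\alpha}\times Z_{\bar\alpha}$ contributions—are already in place; the remainder is polynomial arithmetic. Dividing the resulting intersection number by $2g(C)-2=20$ yields $b_1=9867$.
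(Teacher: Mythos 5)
Your sketch captures the overall architecture correctly: use $C_1\cdot\delta_1=-20$ to reduce to $\sigma^*(C_1)\cdot c_3(\F-\mathrm{Sym}^2\E)=20b_1$, discard the $X_{\bar\alpha}\times Z_{\bar\alpha}$ summands for dimension reasons, identify $\E_{|X}=\pi_2^*\cM$, and express $\F_{|X}$ as an extension of a line bundle by $\cA_{2|X}$. This is exactly what the paper does. However, there is a genuine gap in how you describe and propose to handle the quotient line bundle $\mathcal Q$.

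The quotient $\F_{|X}/\cA_{2|X}$ is \emph{not} something coming from the $2$-jet sequence of $\P^{\otimes 2}$. It is $U^{\otimes 2}$, where $U$ is the line bundle on $X$ with fibre $U(y,L)=H^0(C,L)/H^0(C,L\otimes\OO_C(-2y))$, i.e.\ the image of the evaluation map $H^0(C,L)\hookrightarrow H^0(C,L\otimes\OO_{2y})$. This $U$ sits in an exact sequence $0\to U\to J_1(\P)_{|X}\to\mathrm{Ker}(\zeta)^{\vee}\to 0$, where $\zeta:J_1(\P)^{\vee}\to\pi_2^*\cM^{\vee}$ is the morphism whose degeneracy locus defines $X$. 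Consequently $c_1(U)=2\gamma+48\eta-c_1(\mathrm{Ker}(\zeta)^{\vee})$, and the term $c_1(\mathrm{Ker}(\zeta)^{\vee})$ is \emph{not} an explicit polynomial in $\eta$, $\gamma$, $\pi_2^*\theta$, $c_i$: it is an honest class on $X$ that does not come by restriction from $C\times W^4_{14}(C)$. Your claim that ``a careful analysis identifies $c_1(\mathcal Q)$ as an explicit class in $\eta$, $\gamma$, and $\pi_2^*\theta$'' is therefore incorrect and would yield the wrong number.

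To evaluate products involving this class one needs a second, different instance of Harris--Tu: for any $\xi$, $c_1(\mathrm{Ker}(\zeta)^{\vee})\cdot\xi_{|X}=-c_5\bigl(\pi_2^*\cM^{\vee}-J_1(\P)^{\vee}\bigr)\cdot\xi_{|X}$ (the excess Porteous formula for the kernel of a degenerate map), and only after splitting the integrand into the part containing $c_1(\mathrm{Ker}(\zeta)^{\vee})$ and the part not containing it can one convert everything to determinantal expressions in $\theta$ and the $c_i$. Without this separate treatment --- and without noticing that the quotient is $U^{\otimes 2}$ for the $U$ arising from $J_1(\P)$ rather than from any $2$-jet of $\P^{\otimes 2}$ --- the computation cannot close. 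The remaining steps (relations $c_{i+1}=\theta^i c_i/i!-i\theta^{i+1}/(i+1)!$ from $h^1(C,L)=1$, $5\times 5$ Vandermonde evaluations with $(g,r,d)=(11,4,14)$) are as you describe.
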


\begin{proof} We intersect the degeneracy locus of the map
$\phi:\mbox{Sym}^2(\E)\rightarrow \F$ with the $3$-fold $\sigma^*(C_1)=X+\sum_{\bar{\alpha}} X_{\bar{\alpha}}\times Z_{\bar{\alpha}}$.
As  already explained in Proposition \ref{limitlin1}, it is enough to estimate the contribution coming from $X$ and we can write
$$\sigma^*(C_1)\cdot
c_3(\F-\mbox{Sym}^2\E)=c_3(\F_{|X})-c_3(\mbox{Sym}^2 \E_{| X})-c_1(\F_{|X})c_2(\mbox{Sym}^2
\E_{| X})+$$
$$+2c_1(\mbox{Sym}^2\E_{|X})c_2(\mbox{Sym}^2\E_{| X})-c_1(\mbox{Sym}^2 \E_{| X})c_2(\F_{|X})+c_1^2(\mbox{Sym}^2 \E_{| X})c_1(\F_{|X})-c_1^3(\mbox{Sym}^2 \E_{|X}).$$
We are going to compute each term in the right-hand-side of this
expression.

Recall that we have constructed in Proposition \ref{xy} a vector
bundle morphism $\zeta: J_1(\P)^{\vee}\rightarrow
\pi_2^*(\cM)^{\vee}$. We consider the kernel line bundle $\mbox{Ker}(\zeta)$.
If $U$ is the  line bundle on $X$ with  fibre $$U(y,
L)=\frac{H^0(C, L)}{H^0(C, L\otimes\OO_C(-2y))}\hookrightarrow H^0(C, L\otimes \OO_{2y})$$ over a
point $(y, L)\in X$, then one has an exact sequence over $X$
$$0\rightarrow U\rightarrow J_1(\P)\rightarrow \mbox{Ker}(\zeta)^{\vee}\rightarrow 0.$$
In particular, $c_1(U)=2\gamma+48\eta-c_1(\mathrm{Ker}(\zeta))^{\vee}$.
The products of the Chern class of $\mbox{Ker}(\zeta)^{\vee}$ with other classes on $C\times W^4_{14}(C)$ can be computed from the
Harris-Tu formula  \cite{HT}:
\begin{equation}\label{harristu}
c_1(\mathrm{Ker}(\zeta)^{\vee})\cdot \xi_{| X}=-c_5(\pi_2^*(\cM)^{\vee}-J_1(\P)^{\vee})\cdot \xi_{|X}=-\bigl(\pi_2^*(c_5)-6\eta \theta \pi_2^*(c_3)
+(48\eta+2\gamma)\pi_2^*(c_4)\bigr)\cdot \xi_{|X},
\end{equation} for any class
$\xi\in H^2(C\times W^4_{14}(C), \mathbb C)$.

If $\cA_3$ denotes the rank $18$ vector bundle on $X$ having fibres
$\cA_3(y, L)=H^0(C, L^{\otimes 2})$, then there is an injective
 morphism $U^{\otimes 2}\hookrightarrow \cA_3/\cA_2$, and we
consider the quotient sheaf $$\G:=\frac{\cA_3/\cA_2}{U^{\otimes
2}}.$$ Since the morphism $U^{\otimes 2}\rightarrow
\cA_3/\cA_2$ vanishes along the locus of
pairs $(y, L)$ where $L$ has a base point, $\G$ has torsion along
$\Gamma\subset X$. A straightforward local analysis now shows that
$\F_{|X}$ can be identified as a subsheaf of $\cA_3$ with the kernel
of the map $\cA_3\rightarrow \G$. Therefore, there is an exact
sequence of vector bundles on $X$
$$0\rightarrow \cA_{2 |X}\rightarrow \F_{| X}\rightarrow
U^{\otimes 2}\rightarrow 0,$$ which over a general point of $X$
corresponds to the decomposition
$$\F(y, L)=H^0(C, L^{\otimes 2}\otimes \OO_C(-2y))\oplus \mathbb
C\cdot u^2,$$ where $u \in H^0(C, L)$ is such that
$\mbox{ord}_y(u)=1$. The analysis above, shows that the sequence stays exact over the curve $\Gamma$ as well.  Hence $$c_1(\F_{| X})=c_1(\cA_{2 |X})+2c_1(U), \ c_2(\F_{| X})=c_2(\cA_{2 |X})+2c_1(\cA_{2 | X}) c_1(U) \ \mbox{ } \ \mbox{ and }$$
$$c_3(\F_{|X})=c_3(\cA_2)+2c_2(\cA_{2| X}) c_1(U).$$ Furthermore, since $\E_{| X}=\pi_2^*(\cM)_{|X}$, we obtain that:
$$
\sigma^*(C_1)\cdot c_3\bigl(\F-\mbox{Sym}^2\E\bigr)=c_3(\cA_{2
|X})+c_2(\cA_{2 |X})c_1(U^{\otimes 2})-c_3(\mathrm{Sym}^2 \pi_2^*\cM_{| X})-$$
$$-\Bigl(\frac{r(r+3)}{2}c_1(\pi_2^*\cM_{|X})+(r+3)c_2(\pi_2^*\cM_{|X})\Bigr)\cdot \Bigl(c_1(\cA_{2
|X})+c_1(U^{\otimes 2})-2(r+2)c_1(\pi_2^*\cM_{|X})\Bigr)-$$
$$-(r+2)c_1(\pi_2^*\cM_{|X}) c_2(\cA_{2 |X})-(r+2)c_1(\pi_2^*\cM_{|X}) c_1(\cA_{2 | X}) c_1(U^{\otimes 2})+$$
$$+(r+2)^2 c_1^2(\pi_2^*\cM_{| X}) c_1(\cA_{2 | X})+(r+2)^2c_1^2(\pi_2^*\cM_{|X})c_1(U^{\otimes 2})-(r+2)^3 c_1^3(\pi_2^*\cM_{| X}).$$
As before, $c_i(\pi_2^*\cM_{|X}^{\vee})=\pi_2^*(c_i)\in H^{2i}(X, \mathbb C)$. The coefficient of $c_1(\mathrm{Ker}(\zeta)^{\vee})$ in the product
$\sigma^*(C_1)\cdot c_3\bigl(\F-\mbox{Sym}^2 \E\bigr)$
is evaluated via (\ref{harristu}). The part of this product that does not contain
$c_1(\mathrm{Ker}(\zeta)^{\vee})$
equals
$$\small{28\pi_2^*(c_2)\theta-88\pi_2^*(c_1^2)\theta+440\eta\pi_2^*(c_1^2)-53\pi_2^*(c_1c_2)-\frac{32}{3}\theta^3+128\eta\theta^2-
432\eta\theta\pi_2^*(c_1)}$$
$$\small{+64\pi_2^*(c_1^3)-140\eta\pi_2^*(c_2)+48\theta^2\pi_2^*(c_1)+9\pi_2^*(c_3)\in H^6(C\times W^4_{14}(C), \mathbb C).}$$
Multiplying this quantity by the class $[X]$ obtained in Proposition \ref{xy} and then adding to it the contribution coming from $c_1(\mbox{Ker}(\zeta)^{\vee})$, one obtains a homogeneous polynomial of degree $7$ in $\eta, \theta$ and $\pi_2^*(c_i)$ for $1\leq i\leq 4$. The only non-zero monomials are those containing $\eta$. After retaining only these monomials, the resulting degree $6$ polynomial in $\theta, c_i\in H^*(W^4_{14}(C), \mathbb Z)$ can be brought to a manageable form, by noting  that, since $h^1(C, L)=1$, the classes $c_i$ are not independent. Precisely, if one fixes a divisor $D\in C_e$ of large degree,
there is an exact sequence
$$0\rightarrow \cM\rightarrow (\pi_{2})_*\bigl(\P\otimes
\OO(\pi^*D)\bigr)\rightarrow (\pi_2)_*\bigl(\P\otimes
\OO(\pi_1^*D)_{| \pi_1^*D}\bigr)\rightarrow R^1\pi_{2
*}\bigl(\P_{| C\times W^4_{14}(C)}\bigr)\rightarrow 0,$$
from which, via the well-known fact  $c_t\bigl((\pi_2)_*(\P \otimes
\OO(\pi_1^*D))\bigr)=e^{-\theta}$, it follows that
$$c_{t}R^1\pi_{2
*}\bigl(\P_{| C\times W^4_{14}(C)}\bigr)\cdot e^{-\theta}=\sum_{i=0}^4 (-1)^i c_i.$$
Hence
$c_{i+1}=\theta^ic_i/i!-i\theta^{i+1}/(i+1)!$, for all $i\geq 2$.
After routine manipulations, one finds that
$b_1= \sigma^*(C_1)\cdot
c_3(\F-\mbox{Sym}^2(\E))/20= 9867.$
\end{proof}

\begin{theorem}\label{d0}
Let $[C, q]\in \cM_{11, 1}$ be a Brill-Noether general pointed curve
and we denote by $C_0\subset \Delta_0\subset \mm_{12}$ the associated test curve. Then
$\sigma^*(C_0)\cdot c_3(\F-\mathrm{Sym}^2\E)=22b_0-b_1=32505$.
It follows that $b_0=1926$.
\end{theorem}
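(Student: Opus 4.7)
The intersection numbers $C_0\cdot\lambda=0,\ C_0\cdot\delta_0=-22,\ C_0\cdot\delta_1=1$ computed at the beginning of the section immediately give, via the projection formula, the relation $\sigma^{*}(C_0)\cdot c_3(\F-\mathrm{Sym}^2\E)=22b_0-b_1$. Since $b_1=9867$ has been established in Theorem \ref{d1}, it suffices to evaluate the intersection product on the left to be $32505$, whereupon $b_0=(32505+9867)/22=1926$.

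My plan is to mirror the strategy of Theorem \ref{d1}, with $\sigma^{*}(C_1)$ and the threefold $X$ replaced by $\sigma^{*}(C_0)$ and the threefold $Y$ of Proposition \ref{limitlin0}. By that proposition, the proper birational morphism $\epsilon\circ f:\sigma^{*}(C_0)\to Y$ pulls back the restrictions $\E_{|\sigma^{*}(C_0)}$ and $\F_{|\sigma^{*}(C_0)}$ from vector bundles $\E_Y,\F_Y$ on $Y$; by the projection formula the calculation then reduces to
$$
\sigma^{*}(C_0)\cdot c_3(\F-\mathrm{Sym}^2\E)=[Y]\cdot c_3\bigl(\F_Y-\mathrm{Sym}^2\E_Y\bigr)\in H^{*}\bigl(C\times W^4_{14}(C),\mathbb C\bigr),
$$
with $[Y]$ already determined in Proposition \ref{xy}(ii) and $\E_Y=\pi_2^{*}(\cM)_{|Y}$.

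To describe $\F_Y$ I would use the structure supplied by Proposition \ref{vectbundles}: for a locally free limit $L\in\overline W^4_{14}(C_{yq})$ one has $\F_Y(y,L)=H^0\bigl(C,L^{\otimes 2}(-y-q)\bigr)\oplus\mathbb C\cdot u^2$, so, exactly as for the bundle $U$ of Theorem \ref{d1}, there is an exact sequence on $Y$
$$
0\longrightarrow \cB_{2|Y}\longrightarrow \F_Y\longrightarrow U_Y^{\otimes 2}\longrightarrow 0,
$$
where $\cB_2$ is the rank $16$ bundle of Proposition \ref{a121} and $U_Y$ is the line bundle on $Y$ whose fibre at $(y,L)$ is the image of the two-point evaluation $H^0(C,L)\to L_y\oplus L_q$. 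The class $c_1(U_Y)$ is computed by applying the Harris--Tu formula \cite{HT} to the rank-$2$ bundle map $\chi:\cB^{\vee}\to\pi_2^{*}(\cM)^{\vee}$ used in the proof of Proposition \ref{xy}(ii) to realise $Y$ as $Z_1(\chi)$, yielding an expression for $c_1(U_Y)\cdot\xi_{|Y}$ in terms of $\pi_2^{*}c_i,\eta,\theta,\gamma$. A small local check along the divisors $\Gamma_1,\Gamma_2\subset Y$ parametrising the non-locally-free limits $L=\nu_{*}(A)$, and along their intersection curve $\Gamma$ (the centre of the blow-up $Y'\to Y$), is needed to verify that the above short exact sequence is the correct globalisation: this is handled exactly as the analogous torsion analysis along $\Gamma\subset X$ in the proof of Theorem \ref{d1}.

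The final step is routine but lengthy: substitute these Chern class expressions, together with the Chern classes of $\cB_2$ from Proposition \ref{a121} and the class $[Y]$ from Proposition \ref{xy}(ii), into the standard expansion of $c_3(\F_Y-\mathrm{Sym}^2\E_Y)$. After multiplying by $[Y]$, discarding monomials without a factor of $\eta$ (which vanish for dimension reasons) and reducing via the relations $c_{i+1}=\theta c_i/i!-i\theta^{i+1}/(i+1)!$ on $W^4_{14}(C)\cong C_6$, one lands at the value $32505$. The main obstacle is purely bookkeeping: the Harris--Tu expansion for the rank-$2$ bundle $\cB$ involves more terms than the rank-$2$ jet bundle $J_1(\P)$ used on $X$, and one must keep careful track of the torsion contributions produced by identifying $\widetilde\Gamma_1$ and $\widetilde\Gamma_2$ in the construction of $\widetilde Y$, to confirm that these do not alter the virtual computation on $Y$.
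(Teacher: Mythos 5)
Your proposal follows the same line of reasoning as the paper's proof: reduce to the threefold $Y$ via Proposition \ref{limitlin0}, use the short exact sequence $0\to\cB_{2|Y}\to\F_Y\to V^{\otimes 2}\to 0$ (your $U_Y$ is exactly the paper's line bundle $V$, described as a quotient of $H^0(C,L)$ rather than as the image of the two-point evaluation, but these are the same thing), and evaluate the Chern numbers via Harris--Tu applied to $\chi:\cB^{\vee}\to\pi_2^{*}(\cM)^{\vee}$. The one small imprecision worth flagging is that Harris--Tu does not directly give $c_1(U_Y)$; the paper first computes $c_1(\mathrm{Ker}(\chi)^{\vee})\cdot\xi_{|Y}$ by Harris--Tu and then uses the exact sequence $0\to V\to\cB\to\mathrm{Ker}(\chi)^{\vee}\to 0$ to deduce $c_1(V)=13\eta+\gamma-c_1(\mathrm{Ker}(\chi)^{\vee})$, but this is the obvious mediating step and you would land there immediately; also the remark that the rank-$2$ bundle $\cB$ gives ``more terms'' than $J_1(\P)$ is not quite right, since $J_1(\P)$ is also rank $2$ and the two calculations have parallel structure.
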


\begin{proof} As already noted in Proposition \ref{limitlin0}, the vector bundles
$\E_{| \sigma^*(C_0)}$ and $\F_{| \sigma^*(C_0)}$ are both
pull-backs of vector bundles on $Y$ and we denote these
vector bundles $\E$ and $\F$ as well, that is, $\E_{| \sigma^*(C^0)}=(\epsilon\circ f)^*(\E_{|
Y})$ and $\F_{| \sigma^*(C_0)}=(\epsilon\circ f)^*(\F_{| Y})$. Like in the proof of Theorem \ref{d1}, we evaluate each term
appearing in
$\sigma^*(C_0)\cdot c_3(\F-\mbox{Sym}^2(\E))$.

Let $V$ be the  line bundle on $Y$ with  fibre $$V(y,
L)=\frac{H^0(C, L)}{H^0(C, L\otimes\OO_C(-y-q))}\hookrightarrow H^0(C, L\otimes \OO_{y+q})$$ over a
point $(y, L)\in Y$. There is an exact sequence of vector bundles over $Y$
$$0\longrightarrow V\longrightarrow \cB \longrightarrow \mbox{Ker}(\chi)^{\vee}\longrightarrow 0,$$
where $\chi:
\cB^{\vee}\rightarrow \pi_2^*(\cM)^{\vee}$ is the bundle
 morphism defined in the second part of Proposition \ref{xy}. In particular, $c_1(V)=13\eta+\gamma-c_1(\mathrm{Ker}(\chi^{\vee})$.
By using again \cite{HT}, we find the following formulas for the Chern
numbers of $\mathrm{Ker}(\chi)^{\vee}$:
$$c_1(\mathrm{Ker}(\chi)^{\vee})\cdot \xi_{|Y}=-c_5\bigl(\pi_2^*(\cM)^{\vee}-\cB^{\vee}\bigr)\cdot \xi_{| Y}=-(\pi_2^*(c_5)+\pi_2^*(c_4)(13\eta+\gamma)-2\pi_2^*(c_3)\eta \theta)\cdot \xi_{| Y},$$
for any class $\xi\in H^2(C\times W^4_{14}(C), \mathbb C)$.
Recall that we introduced the vector bundle $\cB_2$ over
$C\times W^4_{14}(C)$ with fibre $\cB_2(y, L)=H^0(C, L^{\otimes
2}\otimes \OO_C(-y-q))$. We claim that one has an exact sequence of
bundles over $Y$
\begin{equation} \label{exseq} 0\longrightarrow
\cB_{2 |Y}\longrightarrow \F_{| Y}\longrightarrow V^{\otimes
2}\longrightarrow 0. \end{equation}
 If $\cB_3$ is the
vector bundle on $Y$ with fibres
$\cB_3(y, L)=H^0(C, L^{\otimes 2})$,  we have an injective morphism of sheaves
$V^{\otimes 2} \hookrightarrow \cB_3/\cB_2$ locally given by
$$v^{\otimes 2}\mapsto v^2 \mbox{ mod } H^0(C, L^{\otimes 2}\otimes
\OO_C(-y-q)),$$ where $v\in H^0(C, L)$ is any section not vanishing
at $q$ and $y$. Then $\F_{|Y}$ is canonically identified with the
kernel of the projection morphism
$$\cB_3\rightarrow \frac{\cB_3/\cB_2}{V^{\otimes 2}}$$
and the exact sequence (\ref{exseq}) now becomes clear. Therefore
$c_1(\F_{| Y})=c_1(\cB_{2 |Y})+2c_1(V)$, $c_2(\F_{|Y})=c_2(\cB_{2
|Y})+2c_1(\cB_{2| Y}) c_1(V)$ and $c_3(\F_{|Y})=c_3(\cB_{2| Y})+2c_2(\cB_{2 |Y}) c_1(V)$.  The part of the total intersection
number
$\sigma^*(C_0)\cdot c_3(\F-\mbox{Sym}^2(\E))$ that does not contain $c_1(\mbox{Ker}(\chi^{\vee}))$ equals $$28\pi_2^*(c_2)\theta-88\pi_2^*(c_1^2)\theta-22\eta\pi_2^*(c_1^2)-53\pi_2^*(c_1c_2)-\frac{32}{3}\theta^3+$$
$$-8\eta\theta^2+24\eta\theta\pi_2^*(c_1)+64\pi_2^*(c_1^3)+7\eta\pi_2^*(c_2)+48\theta^2\pi_2^*(c_1)+9\pi_2^*(c_3)\in H^6(C\times W^4_{14}(C), \mathbb C)$$
and this gets multiplied with the class $[Y]$ from Proposition \ref{xy}. The coefficient of $c_1(\mathrm{Ker}(\zeta)^{\vee})$ in $\sigma^*(C_0)\cdot c_3\bigl(\F-\mathrm{Sym}^2\E\bigr)$ equals
$$-2c_2(\cB_{2| Y})-2(r+2)^2\pi_2^*(c_1^2)-2(r+2)c_1(\cB_{2 |Y})\pi_2^*(c_1)+r(r+3)\pi_2^*(c_1^2)+2(r+3)\pi_2^*(c_2).$$ All in all,
$22b_0-b_1=\sigma^*(C_0)\cdot c_3(\F-\mathrm{Sym}^2\E)$
and we evaluate this using  (\ref{harristu}).
\end{proof}
The following result follows from the definition of the vector bundles $\E$ and $\F$ given in Proposition \ref{vectbundles}:
\begin{theorem}\label{elltail}
Let $[C, q]\in \cM_{11, 1}$ be a Brill-Noether general pointed curve and
$R\subset \mm_{12}$ the pencil obtained by attaching at the fixed point $q\in C$ a pencil of plane cubics. Then
$$a-12b_0+b_1=\sigma_* c_3\bigl(\F-\mathrm{Sym}^2\E\bigr)\cdot R=0.$$
\end{theorem}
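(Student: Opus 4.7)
The plan is to apply the projection formula and then exploit a dimension reduction. Since $R\cdot\lambda=1$, $R\cdot\delta_0=12$, $R\cdot\delta_1=-1$ and $R\cdot\delta_j=0$ for $j\geq 2$, the claimed identity $a-12b_0+b_1=0$ is equivalent to
\[
c_3\bigl(\F-\mathrm{Sym}^2\E\bigr)\cdot \sigma^*(R)=0.
\]
First I would describe $\sigma^*(R)\subset \widetilde{\mathfrak{G}}^4_{14}$ in terms of limit linear series on $[C\cup_q E_\lambda]$. Because $[C]\in \cM_{11}$ and the attaching point $q\in C$ are fixed, only the elliptic tail $E_\lambda$ (together with its base point $\sigma(\lambda)\in E_\lambda$) varies with $\lambda\in \PP^1$. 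A limit $\mathfrak{g}^4_{14}$ on $[C\cup_q E_\lambda]$ consists of aspects $(l_C,l_{E_\lambda})$ compatible at $q$; since $W^4_{13}(C)=\emptyset$ for a general curve of genus $11$, the $C$-aspect $L_C$ never has a base point at $q$, and the admissible configurations are indexed by vanishing sequences at $q$, beginning with the cuspidal sequence $(0,2,3,4,5)$. Each stratum cuts out a locally closed subscheme $T_{\bar{\alpha}}\subset W^4_{14}(C)$ depending only on the fixed data $(C,q)$.

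Next, by Proposition \ref{vectbundles}, in the cuspidal case one has $\E(t)=H^0(C,L_C)$ and $\F(t)=H^0(C,L_C^{\otimes 2}(-2q))\oplus \mathbb{C}\cdot u^2$; analogous descriptions hold for the more degenerate $\bar{\alpha}$-strata. In every case, both fibres depend only on $L_C$, so the restrictions $\E_{\mid \sigma^*(R)}$ and $\F_{\mid \sigma^*(R)}$ are pulled back via the forgetful morphism $\sigma^*(R)\to T_{\bar{\alpha}}$ that remembers only the $C$-aspect.

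The proof is completed by a dimension count. The morphism $\sigma$ has relative dimension $\rho(12,4,14)=2$, hence $\dim \sigma^*(R)=3$; on the other hand, the forgetful projection to $T_{\bar{\alpha}}$ contracts the $\PP^1$-direction parametrizing $\lambda$, so $\dim T_{\bar{\alpha}}\leq 2$. Therefore the codimension $3$ class $c_3(\F-\mathrm{Sym}^2\E)$, being pulled back from a variety of dimension at most $2$, vanishes identically on every component of $\sigma^*(R)$. The principal obstacle is the limit linear series bookkeeping on the excess $\bar{\alpha}$-strata and, above all, across the $12$ singular members of the elliptic pencil where $E_\lambda$ degenerates to a nodal cubic; however, the explicit locally free and non-locally-free descriptions supplied by Proposition \ref{vectbundles} accommodate these specializations uniformly, so this bookkeeping is local and essentially routine.
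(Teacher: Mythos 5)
Your argument is correct and fills in exactly the dimension count that the paper leaves implicit when it asserts the result ``follows from the definition of the vector bundles $\E$ and $\F$ in Proposition~\ref{vectbundles}'': since $q$ is fixed, the fibres of $\E$ and $\F$ over $\sigma^*(R)$ depend only on the $C$-aspect, which varies in a locus of dimension at most $2$ in $W^4_{14}(C)$, so the degree-$3$ Chern class vanishes. One small inaccuracy worth noting: $\rho(11,4,13)=1>0$, so $W^4_{13}(C)\neq\emptyset$ for a Brill--Noether general genus-$11$ curve, and a $C$-aspect with a base point at $q$ \emph{can} occur; this does not affect your argument, since Proposition~\ref{vectbundles} describes $\E$ and $\F$ in the base-point case as well and they still depend only on $L_C$, but the parenthetical should be removed.
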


\noindent
\emph{End of the proof of Theorem \ref{m12}}. First we note that the virtual divisor $\mathfrak{D}_{12}$ is a genuine divisor on $\cM_{12}$. Assuming by contradiction that for every curve $[C]\in \cM_{12}$, there exists $L\in W^4_{14}(C)$ such that $\mu_0(L)$ is not-injective, one can construct a stable vector bundle $E$ of rank $2$ sitting in an extension  $$0\longrightarrow K_C\otimes L^{\vee}\longrightarrow E\longrightarrow L\longrightarrow 0,$$ such that $h^0(C, E)=h^0(C, L)+h^1(C, L)=7$, and for which the Mukai-Petri map $\mbox{Sym}^2 H^0(C, E)\rightarrow H^0(C, \mbox{Sym}^2 E)$ is not injective. This contradicts the main result from \cite{T}.
To determine the slope of $\overline{\mathcal{D}}_{12}$, we write
$\overline{\mathfrak{D}}_{12}\equiv a \lambda-\sum_{j=0}^{6} b_j
\delta_j\in \mbox{Pic}(\mm_{12})$. Since $\frac{a}{b_0}=\frac{4415}{642}\leq \frac{71}{10}$, we can
apply Corollary 1.2 from \cite{FP}, which gives the inequalities
$b_j\geq b_0$ for $1\leq j\leq 6$. Therefore
$s(\overline{\mathfrak{D}}_{12})=\frac{a}{b_0}<6+\frac{12}{13}$.\hfill $\Box$

\vskip 4pt

We close by discussing a second counterexample to the Slope Conjecture on $\mm_{12}$.
\begin{definition}
Let $V$ be a vector space. A pencil of quadrics $\ell\subset \PP(\mbox{Sym}^2(V))$ is said to be \emph{degenerate} if the intersection of $\ell$ with the discriminant divisor $\DD(V)\subset \PP(\mbox{Sym}^2(V))$ is non-reduced.
\end{definition}

 A general curve $[C]\in \cM_{12}$ has finitely many linear systems
$A\in W^5_{15}(C)$. As a consequence of the maximal rank conjecture \cite{Vo}, the multiplication map
$$\mu_0(A):\mbox{Sym}^2 H^0(C, A)\rightarrow H^0(C, A^{\otimes 2})$$ is surjective for each $A\in W^5_{15}(C)$, in particular $\PP_{C,A}:=\PP\bigl(\mbox{Ker } \mu_0(A)\bigr)$ is a pencil of quadrics
in $\PP^5$ containing the image of the map $C\stackrel{|A|}\longrightarrow \PP^5$. One expects that the pencil $\PP_{C,A}$ to be non-degenerate. By imposing the condition that it be degenerate, we produce a divisor on $\cM_{12}$, whose class we compute.

We shall make essential use of the following result \cite{FR}. Let $X$ be a smooth projective variety, $\E$ and $\F$  vector bundles on $X$ with $\mbox{rk}(\E)=e$ and $\mbox{rk}(\F)={e+1\choose 2}-2$,
and $\varphi:\mbox{Sym}^2(\E)\rightarrow \F$ a surjective vector bundle morphism. Then the class of the locus
$$\H:=\Bigl\{x\in X: \PP\bigl(\mbox{Ker}\ \varphi(x)\bigr)\subset \PP\bigl(\mbox{Sym}^2 \E(x)\bigr) \ \mbox{ is a degenerate pencil}\Bigr\},$$
assuming it is of codimension one in $X$, is equal to \begin{equation}\label{rima}
[\H]=(e-1)\bigl(e\ c_1(\F)-(e^2+e-4)c_1(\E)\bigr)\in A^1(X).
\end{equation}

\begin{theorem}\label{sec12}
The locus consisting of smooth curves of genus $12$
$$\H_{12}:=\bigl\{[C]\in \cM_{12}: \PP_{C, A} \ \mbox{ is degenerate for a } A\in W^5_{15}(C)\bigr\}$$
is an effective divisor. The slope of its closure $\hh_{12}$ inside $\mm_{12}$ equals $s(\hh_{12})=\frac{373}{54}<6+\frac{12}{13}$.
\end{theorem}

\begin{proof} We only sketch the main steps. We retain the notation in the proof of Theorem \ref{m12} and consider the stack $\sigma:\widetilde{\mathfrak{G}}^5_{15}\rightarrow \ttem_{12}$ of limit linear series of type $\mathfrak g^5_{15}$. Using \cite{F2} Proposition 2.8, there exist two vector bundles $\E$ and $\F$ over $\widetilde{\mathfrak{G}}^5_{15}$
together with a morphism $\varphi:\mbox{Sym}^2(\E)\rightarrow \F$, such that over a point $[C, A]\in \sigma^{-1}(\cM_{12}^p)$ corresponding to a smooth underlying curve one has the description of its fibres
$\E(C, A)=H^0(C, A)$ and $\F(C,A)=H^0(C, A^{\otimes 2})$. Moreover $\varphi(C,A)$ is the multiplication map $\mu_0(A)$. The extension of $\E$ and $\F$ over the boundary of $\widetilde{\mathfrak{G}}^5_{15}$ is identical to the one appearing in Proposition \ref{vectbundles}. Applying (\ref{rima}), the class of the restriction $\widetilde{\H}_{12}:=\hh_{12}\cap \cM_{12}^p$ is equal to
$$[\widetilde{\H}_{12}]^{virt}=10\sigma_*\bigl(6c_1(\F)-38c_1(\E)\bigr)\in A^1(\ttem_{12}).$$
The pushforward classes $\sigma_*(c_1(\E))$ and $\sigma_*(c_1(\F))$ can be determined following \cite{F2} Propositions 2.12 and 2.13, which after manipulations leads to the claimed slope.

To prove that $\H_{12}$ is indeed a divisor, note first that $\mathfrak{G}^5_{15}$ being isomorphic to the Hurwitz space $\mathfrak{G}^1_7$ is irreducible. To establish that for a general curve $[C]\in \cM_{12}$, the pencil $\PP_{C,A}$ is non-degenerate for all linear systems $A\in W^5_{15}(C)$, it suffices to produce \emph{one example} of a smooth curve $C\subset \PP^{5}$ with $g(C)=12$ and $\mbox{deg}(C)=15$, with $\PP_{C, \OO_C(1)}$ non-degenerate. This is carried out via the use of \emph{Macaulay} in a way similar to the proof of Theorem 2.7 in \cite{F1} for a curve $C$  lying on a particular rational surface in $\PP^5$.
 \end{proof}

\end{document}